\numberwithin{equation}{section}
\newtheorem{theorem}{Theorem}[section]
\newtheorem{lemma}[theorem]{Lemma}
\newtheorem{proposition}[theorem]{Proposition}
\newtheorem{corollary}[theorem]{Corollary}
\theoremstyle{definition}
\newtheorem{definition}[theorem]{Definition} 
\newtheorem{procedure}[theorem]{Procedure} 
\newtheorem{remark}[theorem]{Remark}
\newtheorem{example}[theorem]{Example}
\begin{document}

\title[projective
Reed--Muller-type codes]
{Indicator functions, v-numbers and Gorenstein rings in the theory of projective
Reed--Muller-type codes}  

\author[Gonz\'alez-Sarabia et al.]{Manuel Gonz\'alez-Sarabia} 

\address{Instituto Polit\'ecnico Nacional, 
UPIITA, Av. IPN No. 2580,
Col. La Laguna Ticom\'an,
Gustavo A. Madero C.P. 07340,
Ciudad de M\'exico, M\'exico, Departamento de Ciencias B\'asicas.}
\email{mgonzalezsa@ipn.mx}

\thanks{
The first and fifth authors were partially supported by SNII, M\'exico. 
The second and third authors were supported by a scholarship from
CONAHCYT, M\'exico. The fourth author is supported by
grant PID2020- 116641GB-I00 funded by MCIN/AEI/10.13039/501100011033}

%\author[Mu\~noz-George]{Humberto Mu\~noz-George}
\author[]{Humberto Mu\~noz-George}
\address{
Departamento de
Matem\'aticas\\
Centro de Investigaci\'on y de Estudios
Avanzados del
IPN\\
Apartado Postal
14--740 \\
07000 Ciudad de M\'exico, M\'exico.
}
\email{munozgeorge426@gmail.com}

\author[]{Jorge A. Ordaz}
%\author[J. Ordaz]{Jorge A. Ordaz}
\address{
Departamento de
Matem\'aticas\\
Centro de Investigaci\'on y de Estudios
Avanzados del
IPN\\
Apartado Postal
14--740 \\
07000 Ciudad de M\'exico, M\'exico.
}
\email{JorgeOrdaz00@hotmail.com}

%\author[E. S\'aenz-de-Cabez\'on]{Eduardo S\'aenz-de-Cabez\'on}
\author[]{Eduardo S\'aenz-de-Cabez\'on}
\address{Departamento de Matem\'aticas y Computaci\'on, Universidad
de La Rioja, Logro\~no, Spain.}
\email{eduardo.saenz-de-cabezon@unirioja.es}

%\author[R. H. Villarreal]{Rafael H. Villarreal}
\author[]{Rafael H. Villarreal}
\address{
Departamento de
Matem\'aticas\\
Centro de Investigaci\'on y de Estudios
Avanzados del
IPN\\
Apartado Postal
14--740 \\
07000 Ciudad de M\'exico, M\'exico.
}
\email{vila@math.cinvestav.mx}
%\thanks{*Corresponding author}

\keywords{Minimum distance, degree, regularity, Reed--Muller-type
codes, finite fields, standard
monomials, h-vectors, Hilbert functions, indicator functions, v-number.}
\subjclass[2010]{Primary 13P25; Secondary 14G50, 94B27, 11T71.} 
\begin{abstract} 
For projective Reed--Muller-type 
codes we give a global duality criterion in terms of the v-number and the Hilbert function of a
vanishing ideal. As an application, we provide a global duality
theorem for projective Reed--Muller-type 
codes over Gorenstein vanishing ideals, generalizing the
known case where the vanishing ideal is a complete intersection. 
We classify self dual Reed--Muller-type codes
over Gorenstein ideals using the regularity and a parity check
matrix. 
For projective evaluation codes, we give a 
duality theorem inspired by that of affine evaluation codes. 
We show how to compute the regularity index of the $r$-th generalized Hamming weight 
function in terms of the standard indicator functions of the set of
evaluation points. 
\end{abstract}

\maketitle 

\section{Introduction}\label{section-introduction}
Let $S=K[t_1,\ldots,t_s]=\bigoplus_{d=0}^\infty S_d$ be a polynomial ring over a finite
field $K=\mathbb{F}_q$ with the standard grading, let 
$\prec$ be a graded monomial order on $S$,  and let
$\mathbb{X}=\{[P_1],\ldots,[P_m]\}$, $|\mathbb{X}|=m\geq 2$, be a set of
distinct points in the projective space $\mathbb{P}^{s-1}$ over the
field $K$.
Let $I=I(\mathbb{X})$ be the graded vanishing ideal of
$\mathbb{X}$ and let $\mathfrak{p}_i$ be the vanishing 
ideal $I([P_i])$ of $[P_i]$.  

The v-number of $I$, denoted ${\rm
v}(I)$, is an algebraic invariant of
$I$ that was introduced in \cite{min-dis-generalized} to study the asymptotic behavior 
of the minimum distance function of $I$ and the minimum distance of 
projective Reed--Muller-type codes (Eq.~\eqref{jun17-23}). 
One can define the v-number ${\rm v}_{\mathfrak{p}_i}(I)$ of $I$
locally at each $\mathfrak{p}_i$ (Eq.~\eqref{jun17-23-1}). 
The notion of v-number is related to indicator
functions (Definitions~\ref{indicator-f} and \ref{sif}). 
These functions are used in coding
theory \cite{min-dis-generalized,dual,sorensen}, Cayley--Bacharach 
schemes \cite{geramita-cayley-bacharach,Guardo-Marino-Van-Tuyl,tohaneanu-vantuyl}, 
and interpolation problems \cite{cocoa-book}. An indicator function of
$[P_i]$ can be computed using \cite[Corollary~6.3.11]{cocoa-book}, see also
\cite{Ceria-etal} and references therein.     

The contents of this work are as follows. 
In Section~\ref{section-prelim}, we introduce definitions and well
known results from commutative algebra and coding theory. We refer 
to this section for all unexplained
terminology and additional information.

In Lemma~\ref{if} and Proposition~\ref{indicator-function-prop}, we
show the following properties of indicator functions of
projective points. For a  thorough study of indicator functions of
affine points see \cite{dual}. 

{\rm(i)} The least degree of an indicator function of $[P_i]$ is equal 
to ${\rm v}_{\mathfrak{p}_i}(I)$.

{\rm(ii)} ${\rm v}_{\mathfrak{p}_i}(I)\leq r_0$ for all $i$ and ${\rm
v}_{\mathfrak{p}_i}(I)=r_0$ for some $i$, where $r_0$ is the
regularity of $S/I$.

{\rm(iii)} For each $[P_i]$ there exists a unique, up to multiplication by a scalar from
$K^*$, standard indicator function $f_i$ of $[P_i]$ of degree ${\rm
v}_{\mathfrak{p}_i}(I)$, where $K^*:=K\setminus\{0\}$.

{\rm (iv)} $(I\colon\mathfrak{p}_i)/I$ is a principal ideal of $S/I$
generated by $\overline{f_i}=f_i+I$ for $i=1,\ldots,m$.

Note that unlike \cite{dual}, we do not
require $f_i(P_i)=1$. 
Based on (iv), 
we obtain an algebraic method
to compute the set $F:=\{f_1,\ldots,f_m\}$ of standard indicator
functions of $\mathbb{X}$, and give an
implementation in \textit{Macaulay}$2$ \cite{mac2}
(Example~\ref{gorenstein-2essential-example},
Procedure~\ref{sep12-18}).

Fix a degree $d\geq 1$. Let $C_\mathbb{X}(d)$ be the 
projective Reed--Muller-type code of
degree $d$ on the set $\mathbb{X}$, let
$\mathcal{L}$ be a linear subspace of $S_d$, and let 
$\mathcal{L}_\mathbb{X}$ be the projective evaluation code of degree
$d$ on the set $\mathbb{X}$ (Subsection~\ref{RM}).  

In this work we emphasize the use of indicator functions, 
{\rm v}-numbers and Gorenstein rings in the theory of projective
Reed--Muller-type codes 
and projective evaluation codes. The affine case is treated in detail
in the article of L\'opez, Soprunov and Villarreal \cite{dual}. 

Given projective evaluation codes $C_1,C_2$ of degrees $d,k$ on
$\mathbb{X}$, we are interested in determining \textit{duality criteria}, that 
is, finding necessary
and sufficient conditions for the monomial equivalence 
(Definition~\ref{me}) of the dual
$C_1^\perp$ of $C_1$ and
$C_2$, using the algebraic invariants of $I$ and the standard indicator functions of
$\mathbb{X}$.
For Reed--Muller-type codes, a duality
criterion is \textit{global} if it classifies the monomial equivalence of
$C_\mathbb{X}(d)^\perp$ and $C_\mathbb{X}(k)$ within a certain
range for $d$ and $k$. 

There is a global duality criterion for affine Reed--Muller-type
codes \cite[Theorem~6.5]{dual} and a global duality theorem for
projective Reed--Muller type codes over complete intersections
\cite[Theorem~2]{sarabia7}. It is well known that an affine
Reed--Muller-type code is a projective Reed--Muller-type 
code \cite{cartesian-codes,affine-codes}. We formulate and prove an extension of
these duality results for arbitrary sets of
projective points, using the Hilbert function $H_I$ of $I$ and the 
algebraic invariants of $I$.

We give a projective global duality criterion that classifies
when $C_\mathbb{X}(r_0-d-1)$ is monomially equivalent to
$C_\mathbb{X}(d)^\perp$ for all $0\leq d\leq r_0$, where 
$r_0$ is ${\rm reg}(S/I)$, the
regularity of $S/I$. Since 
$\dim_K(C_\mathbb{X}(d)^\perp)$ is equal to 
$|\mathbb{X}|-H_I(d)$, a necessary condition for this equivalence
is the equality 
$$H_I(d)+H_I(r_0-d-1)=|\mathbb{X}|\ \mbox{ for all }\ 0\leq d\leq r_0.$$
\quad This equality is equivalent to the symmetry of the $\mathrm{h}$-vector
of $S/I$ (Proposition~\ref{jul5-23}). Another necessary condition comes from the fact 
that the monomial equivalence of $C_\mathbb{X}(0)^\perp$ and
$C_\mathbb{X}(r_0-1)$ implies that ${\rm v}_{\mathfrak{p}_i}(I)=r_0$
for $i=1,\ldots,m$ (Lemma~\ref{sarabia-vila-1}). 
These two 
conditions give us an effective projective global duality 
criterion, that we implement in \textit{Macaulay}$2$ \cite{mac2}
(Procedure~\ref{sep12-18}), to determine when  
$C_\mathbb{X}(r_0-d-1)$ is monomially equivalent to
$C_\mathbb{X}(d)^\perp$ for all $0\leq d\leq r_0$.

We come to one of our main results. 

\noindent {\bf Theorem~\ref{duality-criterion}.}{\rm\ (Projective duality
criterion)}\textit{\ The
following conditions are equivalent. 
\begin{enumerate} 
\item[(a)] $C_\mathbb{X}(r_0-d-1)$ is monomially equivalent to
$C_\mathbb{X}(d)^\perp$  for all $0\leq d\leq r_0$.  
\item[(b)] $H_I(d)+H_I(r_0-d-1)=|\mathbb{X}|$ for all $0\leq d\leq r_0$ 
and $r_0={\rm v}_{\mathfrak{p}_i}(I)$ for all $\mathfrak{p}_i$.
\item[(c)] There is a vector $\beta=(\beta_1,\ldots,\beta_m)\in K^m$ such
that $\beta_i\neq 0$ for all $i$ and
\begin{equation*}
C_\mathbb{X}(d)^\perp=(\beta_1,\ldots,\beta_m)\cdot
C_\mathbb{X}(r_0-d-1)\ \text{ for all \ $0\leq d\leq r_0$}.
\end{equation*}
Moreover, $\beta$ is any vector that defines a parity  
check matrix of $C_\mathbb{X}(r_0-1)$. 
\end{enumerate}
}

The vector $\beta$ of Theorem~\ref{duality-criterion}(c) is unique up to 
multiplication by a scalar from $K^*$ (Lemma~\ref{sep5-23}).  
This criterion will be used to show duality criteria for some
interesting families and recover some known results. 
The condition ``$r_0={\rm v}_{\mathfrak{p}_i}(I)$ for
all $\mathfrak{p}_i$'' that appears in the projective duality criterion
defines a Cayley--Bacharach scheme (CB-scheme) when $K$ is an infinite field 
\cite[Definition~2.7]{geramita-cayley-bacharach}, and is related to
Hilbert functions \cite{geramita-cayley-bacharach}.

If $I$ is a Gorenstein ideal (Subsection~\ref{CA}), then the two
algebraic conditions of Theorem~\ref{duality-criterion}(b) are satisfied
(Proposition~\ref{jul6-23}), and projective global duality
holds in this case (Corollary~\ref{duality-criterion-g}). 
Since complete intersection ideals are Gorenstein
(Proposition~\ref{ci-gor}), 
we recover the projective global duality theorem for complete intersections 
that was shown in 2004 by Gonz\'alez-Sarabia and Renter\'\i a
\cite[Theorem~2]{sarabia7}
(Corollary~\ref{duality-criterion-original}).   

Let $\Delta_\prec(I)$ be the set of standard monomials of $S/I$ with
respect to $\prec$. The notion of essential monomial was introduced 
in \cite{dual} (Definition~\ref{essential-m}). 
The standard indicator functions of $\mathbb{X}$ and the essential monomials 
depend on the monomial order $\prec$ we choose, and essential monomials 
do not always exist (Example~\ref{gorenstein-essential-example}).

We give the following version for projective evaluation codes of the local duality 
theorem of L\'opez, Soprunov and Villarreal \cite[Theorem~5.4]{dual} for affine 
evaluation codes. 

\noindent {\bf Theorem~\ref{combinatorial-condition}.}
\textit{If there exists $t^e$ essential monomial,
and if $\beta_i$ is the coefficient of $t^e$ in the $i$-th standard indicator
function~$f_i$ of $\mathbb{X}$, then for any
$\Gamma_1\subset\Delta_\prec(I)_d$, 
$\Gamma_2\subset \Delta_\prec(I)_k$
satisfying  
\begin{enumerate}
\item $d+k=r_0$,
\item $|\Gamma_1|+|\Gamma_2|=|\mathbb{X}|$, and 
\item $t^e$ does not appear in the remainder on division of $u_1u_2$ by $I$
for every $u_1\in\Gamma_1$, $u_2\in\Gamma_2$,
\end{enumerate}
we have 
$\gamma\cdot{\rm ev}_d(K\Gamma_1)={\rm
ev}_k(K\Gamma_2)^\perp$,
where $\gamma=(\beta_1,\dots,
\beta_m)\cdot(f_1(P_1)^{-1},\ldots,f_m(P_m)^{-1})$, $K\Gamma_i$ is the
linear space generated by $\Gamma_i$, and ${\rm ev}_d$
is the evaluation map of degree $d$ in Eq.~\eqref{evaluation-map}.
}

We classify self dual projective Reed--Muller-type codes
over Gorenstein ideals, up to
monomial equivalence, in terms of the regularity 
(Proposition~\ref{self-dual}).

The following theorem classifies self dual projective Reed--Muller-type codes
over Gorenstein ideals in terms of the regularity and a parity check
matrix.

\noindent {\bf Theorem~\ref{self-dual-general}.}\textit{
Let $\mathbb{X}$ be a subset of
$\mathbb{P}^{s-1}$, let 
$I$ be its vanishing ideal, and let $r_0$ be the regularity of $S/I$.
If $I$ is Gorenstein, then $C_\mathbb{X}(d)^\perp=C_\mathbb{X}(d)$ 
for some $1\leq d\leq r_0$ if and only if
$r_0=2d+1$ and the vector $(1,\ldots,1)$ defines a parity check matrix of
$C_\mathbb{X}(2d)$. 
}

We give a simple algorithm implemented in \textit{Macaulay}$2$
\cite{mac2}  
to determine whether or not a given $C_\mathbb{X}(d)$ is 
self orthogonal or self dual (Example~\ref{algorithm-example}, 
Procedure~\ref{sep29-23}). The self dual codes of the form
$C_\mathbb{X}(d)$, $\mathbb{X}=\mathbb{P}^{s-1}$, were classified 
by S{\o}rensen \cite[Theorem~2]{sorensen}.   

The property of
being a Gorenstein ideal is purely algebraic, and because of this there are
few tools that make this property more intuitive, see
\cite{Davis-etal,Eisen,geramita-cayley-bacharach,Kreuzer,Mats,Sta1} and the
introduction of the recent article \cite{Elias-Rossi}.

The content of Section~\ref{section-gorenstein} is as follows. 
The theory of Gorenstein vanishing ideals is examined using 
indicator functions, and we use this
theory to show existence of essential monomials and to give a duality
theorem for projective evaluation codes over Gorenstein 
ideals (Corollary~\ref{combinatorial-condition-projective}). Using a
result of Kreuzer \cite[Corollary~2.5(b)]{Kreuzer}, we prove
that $I(\mathbb{X})$ is Gorenstein if and only if the projective
duality criterion of Theorem~\ref{duality-criterion} holds
(Theorem~\ref{conjecture-gor}). The results in 
this section can be used to study the family of parameterized projective codes over
algebraic toric sets introduced and studied by Renter\'\i a, Simis and
Villarreal \cite{algcodes}
(Example~\ref{example-parameterized}). For this family Maria Vaz
Pinto (personal communication) has conjectured that $I(\mathbb{X})$ is Gorenstein if and only
if $I(\mathbb{X})$ is a complete intersection. 
     
We set $J=(I,h)$, where $h$ is a form of degree $1$ regular on $S/I$.
The ring $S/J$ is Artinian and its socle is denoted by 
${\rm Soc}(S/J)$ (Eq.~\eqref{socle-def}). 
The notion of \textit{level} algebras---algebras whose socle is in one
degree---was introduced by Stanley \cite{Stanley-level}, see 
\cite{boij-level} for the related notion of level module. Gorenstein
algebras are level \cite{Fro3}, \cite[Lemma~5.3.3]{monalg-rev}, and many of the graded 
algebras that we find in algebraic geometry and in combinatorics are level
algebras.  

If $I$ is Gorenstein, we describe the socle of $S/J$ using
$\Delta_\prec(J)$ (Theorem~\ref{socle-gorenstein}(a)), and if $S/I$ is a level ring and has
symmetric h-vector, we show that $S/I$ is Gorenstein 
(Theorem~\ref{socle-gorenstein}(b)). As a consequence, 
if $f_1,\ldots,f_m$ are the standard indicator functions
of $\mathbb{X}$ and $I$ is Gorenstein, then
$\Delta_\prec(J)_{r_0}=\{t^a\}$ and for each $i$, $\deg(f_i)=r_0$, 
${\rm Soc}(S/J)=K(f_i+J)=K(t^a+J)$, 
and the remainder on division of $f_i$ by $J$ has the form
$r_{f_i}=\lambda_i t^a$ for some $\lambda_i\in K^*$ (Corollary~\ref{socle-gorenstein-coro}).

Let $\prec$ be the graded reverse lexicographic order on $S$. This is
the default order in
\textit{Macaulay}$2$ \cite{mac2}, 
in large part because it is often the most efficient order for use
with Gr\"obner bases.  
If $t_s$ is regular on $S/I$ and $I$ is Gorenstein, there exists 
an essential monomial $t^a\in\Delta_\prec(I)_{r_0}$ such
that $t_s\notin{\rm supp}(t^a)$, and every $f_i$ has the form
$f_i=\lambda_it^a+t_sG_i$, for some $\lambda_i\in K^*$, $G_i\in S_{r_0-1}$ 
(Proposition~\ref{gorenstein-essential}). In particular, essential
monomials exist if $\mathbb{X}$ is a projective torus because in this
case $I(\mathbb{X})$ is a complete intersection \cite{GRH}
(Example~\ref{example-parameterized}). We apply the results of
Section~\ref{section-gorenstein} to show a  
projective local duality theorem that complements
Theorem~\ref{combinatorial-condition}
(Corollary~\ref{combinatorial-condition-projective}). 

In Section~\ref{section-affine}, using the projective closure of an
affine set, we recover the global duality
criterion of L\'opez, Soprunov and Villarreal \cite[Theorem~6.5]{dual} for affine
Reed--Muller-type codes using our projective global duality criterion
(Corollary~\ref{duality-criterion-affine}). 

Section~\ref{section-v-number}  shows an effective method to compute 
the regularity index of the $r$-th generalized Hamming weight 
function in terms of the standard indicator functions of the set of
evaluation points and the notion of $r$-th v-number
(Theorem~\ref{sarabia-vila-2}). 

Let $1\leq r\leq\dim_K(C_\mathbb{X}(d))$ be an integer. 
The $r$-th {\it generalized Hamming weight\/} of $C_\mathbb{X}(d)$ is denoted
$\delta_r(C_\mathbb{X}(d))$ or $\delta_\mathbb{X}(d,r)$
(Eq~\eqref{ghw}). If
$r=1$, $\delta_\mathbb{X}(d,1)$ is the minimum
distance of $C_\mathbb{X}(d)$.
The {\it footprint matrix\/} $({\rm fp}_{I}(d,r))$ 
and the {\it weight matrix} $(\delta_{\mathbb{X}}(d,r))$ of $I$ are the 
matrices of size $r_0\times |\mathbb{X}|$ 
whose $(d,r)$-entries are ${\rm fp}_{I}(d,r)$ and 
$\delta_{\mathbb{X}}(d,r)$, respectively. By convention, in 
the weight and footprint matrices, we add
$\infty$ in the positions where $r>\dim_K(C_\mathbb{X}(d))$. We refer to
\cite{rth-footprint,hilbert-min-dis} for the 
definition of ${\rm fp}_{I}(d,r)$ and the theory of footprint
functions of vanishing ideals. A main result is that 
${\rm
fp}_{I}(d,r)\leq\delta_{\mathbb{X}}(d,r)$
\cite[Theorem~4.9]{rth-footprint} and in certain
cases they are equal (Example~\ref{Hiram-first-counterxample}). The footprint ${\rm
fp}_{I}(d,r)$ is much easier to compute than 
$\delta_{\mathbb{X}}(d,r)$ \cite[Procedure~7.1]{rth-footprint}
(Procedure~\ref{sep12-18}).  
The entries of each row of the weight matrix
$(\delta_{\mathbb{X}}(d,r))$ form an increasing sequence until they
stabilize \cite{wei} and the entries of each column of the weight matrix
$(\delta_{\mathbb{X}}(d,r))$ form a decreasing sequence until they
stabilize \cite[Theorem~5.3]{min-dis-generalized}. 

The $r$-th
v-number of $I$ is denoted by ${\rm{v}}_r(I)$
(Eq.~\eqref{mar8-24-1}) and the regularity index of the $r$-th generalized Hamming weight 
function $\delta_{\mathbb{X}}(\,\cdot\,,r)$ is denoted by $R_r$
(Eq.~\eqref{mar8-24-2}). 
Part of the original interest in the v-number is due to the
equality  ${\rm v}(I)=R_1$
\cite[Proposition~4.6]{min-dis-generalized}, which gives us a formula to
compute $R_1$  
(Example~\ref{Hiram-first-counterxample}, see a \textit{Macaulay}$2$
\cite{mac2} implementation in Appendix~\ref{Appendix}). We generalize this 
equality and show that the regularity index $R_r$ of 
the $r$-th generalized Hamming weight function 
$\delta_{\mathbb{X}}(\,\cdot\,,r)$ is the $r$-th v-number of $I$.

We come to the main result of Section~\ref{section-v-number}. 

\noindent {\bf Theorem~\ref{sarabia-vila-2}.}\textit{
Let $f_1,\ldots, f_m$ be standard indicator
functions of $\mathbb{X}$. The following hold. 
\begin{enumerate}
\item[(a)] $R_r \leq {\rm{reg}}(S/I)$. 
\item[(b)] If $\deg(f_1)\leq\cdots\leq\deg(f_m)$, then
${\rm{v}}_r(I)=\deg(f_r)=R_r$ for $r=1,\ldots,m$.
\item[(c)] Let $\pi$ be a permutation of $\{1,\ldots,m\}$ such that
$\deg(f_{\pi(i)})\leq\deg(f_{\pi(j)})$ for $i<j$. Then, 
${\rm{v}}_r(I):=\deg(f_{\pi(r)})=R_r$ for $r=1,\ldots,m$.
\end{enumerate}
}

As a consequence, our last result shows that if
$\delta_\mathbb{X}(\,\cdot\,,1)$ stabilizes at 
${\rm reg}(S/I)$, then so does $\delta_\mathbb{X}(\,\cdot\,,r)$ for
all $1\leq r\leq m$ (Corollary~\ref{then-so-does}).

We include one section with examples illustrating some of our main 
results (Section~\ref{examples-section}), 
and include an appendix with the \textit{Macaulay}$2$ \cite{mac2} 
implementations of the algorithms used in some of the examples 
to compute the basic parameters and generalized Hamming
weights of projective Reed--Muller-type codes, footprints, h-vectors, v-numbers, 
and the standard indicator 
functions over a finite field (Appendix~\ref{Appendix}). The Gorenstein property is determined
using either the minimal graded free resolution of a vanishing ideal or 
an Artinian reduction. There is a coding theory 
package for \textit{Macaulay}$2$ \cite{mac2} that can be used to study 
evaluation codes \cite{package}.

\section{Preliminaries: Commutative algebra and coding theory}\label{section-prelim} 

In this section of preliminaries, we introduce definitions and results 
from commutative algebra and coding theory. 

\subsection{Commutative algebra}\label{CA} 

Let $S=K[t_1,\ldots,t_s]=\bigoplus_{d=0}^\infty S_d$ be a polynomial ring over a finite
field $K=\mathbb{F}_q$ with the standard grading, let
$\mathbb{X}=\{[P_1],\ldots,[P_m]\}$, $|\mathbb{X}|=m\geq 2$, be a set of
distinct points in the projective space $\mathbb{P}^{s-1}$ over the
field $K$, let $I=I(\mathbb{X})$ be the graded \textit{vanishing ideal} of
$\mathbb{X}$ generated by the homogeneous polynomials in $S$ that
vanish at all points of $\mathbb{X}$, 
and let 
${\mathbf F}$ be the minimal graded free resolution of $S/I$ as an
$S$-module \cite[p.~344]{Vas1}:
\[
{\mathbf F}:\ \ \ 0\rightarrow
\bigoplus_{j}S(-j)^{b_{g,j}}
\stackrel{}{\rightarrow} \cdots
\rightarrow\bigoplus_{j}
S(-j)^{b_{1,j}}\stackrel{}{\rightarrow} S
\rightarrow S/I \rightarrow 0.
\]
\quad The \textit{projective dimension} of $S/I$, denoted ${\rm
pd}_S(S/I)$, is equal to $g$. The {\it Castelnuovo--Mumford regularity\/} of $S/I$ ({\it
regularity} of $S/I$ for short) and the \textit{minimum socle degree}
($s$-\textit{number} for short) of $I$ are defined as \cite[p.~346]{Vas1}:
$${\rm reg}(S/I):=\max\{j-i \mid b_{i,j}\neq 0\}\ \mbox{ and }\ 
s(I):=\min\{j-g \mid b_{g,j}\neq 0\}.$$ 
\quad If 
there is a unique $j$ such that $b_{g,j}\neq 0$, then the ring $S/I$
is called {\em level}. 
In particular, a level ring for 
which the unique $j$ such that $b_{g,j}\neq 0$ satisfies $b_{g,j}=1$
is called {\em Gorenstein}. We say the ideal $I$ is
\textit{Gorenstein} if the quotient ring $S/I$ is
Gorenstein. 

The monomials of $S$ are denoted
$t^c:=t_1^{c_1}\cdots t_s^{c_s}$, $c=(c_1,\dots,c_s)$ in $\mathbb{N}^s$, where
$\mathbb{N}=\{0,1,\ldots\}$. The \textit{support} of $t^c$ is 
${\rm supp}(t^c):=\{t_i\mid c_i\neq 0\}$.
Let $\prec$ be a graded monomial order on $S$, that is, monomials are first compared
by their total degrees \cite[p.~54]{CLO}. The \textit{graded reverse
lexicographic}  order  (GRevLex order) on the 
monomials of $S$ is 
defined by: $t^a\succ t^b$ if either $\deg(t^a)>\deg(t^b)$ or
$\deg(t^a)=\deg(t^b)$ and the last non-zero entry of the vector of
integers $a-b$ is negative. 
  
We denote the initial monomial of a
non-zero polynomial $f\in S$ by ${\rm in}_\prec(f)$ and the initial
ideal of $I$ by ${\rm in}_\prec(I)$ \cite[p.~1]{Stur1}. Let 
$\mathcal{G}=\{g_1,\ldots, g_n\}$ be a {\it Gr\"obner basis\/} of $I$,
that is, ${\rm
in}_\prec(I)$ is equal to $({\rm in}_\prec(g_1),\ldots,{\rm
in}_\prec(g_n))$. 
A monomial $t^a$ is called a 
\textit{standard monomial} of the quotient ring $S/I$, with respect 
to $\prec$, if $t^a\notin{\rm in}_\prec(I)$. The \textit{footprint} of $S/I$, denoted
$\Delta_\prec(I)$, is the set of all standard monomials of $S/I$. We
denote the set of standard monomials of degree $d$ by
$\Delta_\prec(I)_d$. 
The $K$-linear subspace of $S$ spanned by $\Delta_\prec(I)$ is denoted by $K\Delta_\prec(I)$.
A polynomial $f$ is called
a \textit{standard polynomial} of $S/I$ if $f\neq 0$ and $f$ is in
$K\Delta_\prec(I)$. The
footprint is used to study the basic parameters of many kinds of
codes 
\cite{carvalho,geil-2008,geil-hoholdt,geil-pellikaan,rth-footprint,Pellikaan,toric-codes,stefan}.
 
Standard polynomials occur in Gr\"obner basis theory. Given a
polynomial $f\in S$, according to \cite[Proposition~1, p.~81]{CLO},  
there is a unique $r\in S$ with the following
properties:
\begin{enumerate}
\item[(i)] No term of $r$ is divisible by one of ${\rm
in}_\prec(g_1),\ldots,{\rm in}_\prec(g_n)$, that is,
$r\in K\Delta_\prec(I)$. 
\item[(ii)] There is $g\in I$ such that $f=g+r$.
\end{enumerate}
\quad The polynomial $r$, denoted $r_f$, is called the \textit{remainder on division} of $f$
by $\mathcal{G}$ or permitting an abuse of terminology $r$ is also
called the \textit{remainder on division} of $f$ by $I$.

Below, we introduce the notions of regularity index and h-vector. Given an integer 
$d\geq 0$, we let $I_d:=I\cap S_d$ be the $d$-th component
of the graded ideal $I$. The function
\begin{equation}\label{hf-def}
H_I(d):=\dim_K(S_{d}/I_{d}),\ \ \ d=0,1,2,\ldots
\end{equation}
is  called the \textit{Hilbert function} of $S/I$. 
We also denote $H_I$
by $H_\mathbb{X}$. 
By convention
$H_I(d)=0$ for $d=-1$. The Hilbert function is defined for any graded
ideal.

The image of $\Delta_\prec(I)$, under the canonical 
map $S\mapsto S/I$, $x\mapsto \overline{x}$, is a basis of $S/I$ as a
$K$-vector space \cite[Proposition~6.52]{Becker-Weispfenning}. This
is a classical 
result of Macaulay (for a modern approach
see \cite[Chapter~5, Section~3, Proposition 4]{CLO}). In
particular, $H_I(d)$ is the number of standard
monomials of degree $d$ for all $d\geq 0$, that is, $H_I(d)=|\Delta_\prec(I)_d|$ for 
all $d\geq 0$.

Let $H_I$ be the Hilbert function of $S/I$ in Eq.~\eqref{hf-def}. 
By a theorem of Hilbert \cite[p.~58]{Sta1},
there is a unique polynomial 
$h_I(x)\in\mathbb{Q}[x]$ of 
degree $k-1$ such that $H_I(d)=h_I(d)$ for  $d\gg 0$. By convention, the
degree of the zero polynomial is $-1$. The integer $k$ is the
\textit{Krull dimension} of $S/I$ and is denoted by $\dim(S/I)$, and  the {\it degree\/} or 
{\it multiplicity\/} of $S/I$ is the 
positive integer 
$$
\deg(S/I):=\left\{\begin{array}{ll}(k-1)!\, \lim_{d\rightarrow\infty}{H_I(d)}/{d^{k-1}}
&\mbox{if }k\geq 1,\\
\dim_K(S/I) &\mbox{if\ }k=0.
\end{array}\right.
$$ 
\quad Note that the notions of regularity, Hilbert function, Krull
dimension, and
degree can be defined as before for any graded ideal. These invariants
come from algebraic geometry \cite{Eisen,eisenbud-syzygies} and can be
computed using Gr\"obner bases \cite{CLO}.

The \textit{height} of $I$, denoted 
${\rm ht}(I)$,  is equal to $\dim(S)-\dim(S/I)$. 
An element $f\in S$ is called a {\it zero-divisor\/} of $S/I$---as an
$S$-module---if there is
$\overline{0}\neq \overline{a}\in S/I$ such that
$f\overline{a}=\overline{0}$, and $f$ is called {\it regular\/} on
$S/I$ otherwise. The \textit{depth}
of $S/I$, denoted ${\rm depth}(S/I)$, is the largest integer $\ell$
so that there is a homogeneous 
sequence $g_1,\ldots,g_\ell$ in $\mathfrak{m}=(t_1,\ldots,t_s)$ with $g_i$ not a
zero-divisor of $S/(I,g_1,\ldots,g_{i-1})$ for all $1\leq i\leq\ell$.  
The Auslander--Buchsbaum formula \cite[Theorem 3.1]{EvGr}:
\begin{equation}\label{AB}
{\rm pd}_S(S/I) + {\rm depth}(S/I) = \dim(S)=s
\end{equation}
relates the depth and the projective dimension. 

Let 
$A=\bigoplus_{d\geq 0} A_d$ be an Artinian standard graded 
$K$-algebra \cite{Sta1}, that is, $A=S/J$ for some graded ideal $J$ of
$S$ and $\dim(A)=0$. The ring $A$ has finitely many nonzero graded
components because $A$ is Artinian if and only if
$\dim_K(A)<\infty$ \cite[Lemma~2.1.38]{monalg-rev}. 
Letting $\mathfrak{m}=(t_1,\ldots,t_s)$, recall that the {\it socle} of $A$ is
the ideal of $A$ given by 
\begin{equation}\label{socle-def1}
\mathrm{Soc}(A):=(0: A_{+})=(J\colon\mathfrak{m})/J, 
\end{equation}
where $A_{+}:=\bigoplus_{d> 0} A_d=\mathfrak{m}/J$ and
$(J\colon\mathfrak{m}):=\{g\in S\mid g\,\mathfrak{m}\subset J\}$. 
Therefore, as $A$ is also a $K$-algebra,
the socle of $A$ has the structure of a finitely dimensional $K$-vector space. 

The regularity is related to the socle of  Artinian algebras 
\cite{Fro3}, \cite[Lemma~5.3.3]{monalg-rev}. 
We set $J=(I,h)$, where $h$ is a form of degree $1$ regular on $S/I$.
Then, $S/J$ is an Artinian standard graded $K$-algebra and is 
called an \textit{Artinian
reduction} of
$S/I$ (Example~\ref{gorenstein-essential-example}), the socle of
$S/J$ is a $K$-vector space of finite dimension given by
\begin{equation}\label{socle-def}
{\rm Soc}(S/J):=(J\colon\mathfrak{m})/J,
\end{equation}
and the \textit{type} of $S/I$ is equal to 
$\dim_K({\rm Soc}(S/J))$. The ideal $I$ is \textit{Gorenstein} if and
only if ${\rm type}(S/I)=1$ \cite[Corollary~5.3.5]{monalg-rev}. 

\begin{lemma}\label{primdec-ix-a}\cite{hilbert-min-dis} Let
$\mathbb{X}$ be a finite 
subset of $\mathbb{P}^{s-1}$, let $[\alpha]$ be a point in
$\mathbb{X}$  
with $\alpha=(\alpha_1,\ldots,\alpha_s)$
and $\alpha_j\neq 0$ for some $j$, and let
$I([\alpha])$ be the vanishing ideal of $[\alpha]$. Then $I([\alpha])$ is a prime ideal, 
\begin{equation*}
I([\alpha])=(\{\alpha_jt_i-\alpha_it_j\mid j\neq i\in\{1,\ldots,s\}),\
\deg(S/I([\alpha]))=1,\,  
\end{equation*}
${\rm ht}(I([\alpha]))=s-1$, 
and $I(\mathbb{X})=\bigcap_{[\beta]\in{\mathbb{X}}}I([\beta])$ is the primary
decomposition of $I(\mathbb{X})$. 
\end{lemma}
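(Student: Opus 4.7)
The plan is to derive all parts of the lemma from an explicit generator description of $I([\alpha])$. After relabeling coordinates, I will assume $\alpha_s\neq 0$, and set $J:=(\alpha_s t_i-\alpha_i t_s\mid 1\leq i\leq s-1)$. This is a homogeneous ideal generated by $s-1$ linear forms that clearly vanish at $[\alpha]$, so $J\subseteq I([\alpha])$. The key step is to prove the reverse inclusion: given a homogeneous $f\in I([\alpha])$ of degree $d$, the relations $\alpha_s t_i\equiv \alpha_i t_s\pmod{J}$ allow me to reduce $f$ modulo $J$ to a polynomial of the form $c\,t_s^d$ with $c\in K$. Evaluating at $\alpha$ forces $c\alpha_s^d=0$, hence $c=0$ and $f\in J$, so $I([\alpha])=J$.

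The same reduction identifies $S/J\cong K[t_s]$ as graded $K$-algebras via the surjection $t_i\mapsto (\alpha_i/\alpha_s)t_s$ for $i<s$ and $t_s\mapsto t_s$. Since $K[t_s]$ is a one-dimensional integral domain whose $d$-th graded component is one-dimensional for every $d\geq 0$, it follows that $I([\alpha])$ is prime, $\dim(S/I([\alpha]))=1$, $\deg(S/I([\alpha]))=1$, and ${\rm ht}(I([\alpha]))=s-1$. To pass from the generators of $J$ to the full set stated in the lemma, I use the identity
\[
\alpha_s(\alpha_j t_i-\alpha_i t_j)=\alpha_j(\alpha_s t_i-\alpha_i t_s)-\alpha_i(\alpha_s t_j-\alpha_j t_s),
\]
which shows $\alpha_j t_i-\alpha_i t_j\in J$ for all $i\neq j$, giving the symmetric generating set.

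For the primary decomposition, I observe that a homogeneous polynomial vanishes on $\mathbb{X}$ if and only if it vanishes at each $[P_i]$, hence $I(\mathbb{X})=\bigcap_{i=1}^m I([P_i])$ both set-theoretically and as graded ideals. Each component is prime by the first part, and distinct projective points are separated by a suitable linear form, so these primes are pairwise distinct; since they all have height $s-1$, no one properly contains another, and the decomposition is an irredundant primary decomposition with no embedded primes. The main obstacle I anticipate is the careful justification of the reduction modulo $J$ in the first paragraph, which relies on both the graded structure and the normalization $\alpha_s\neq 0$; once that is in place, the remaining assertions follow from the isomorphism $S/I([\alpha])\cong K[t_s]$ by standard facts about Hilbert functions, degree, and height.
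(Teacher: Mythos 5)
Your argument is correct and self-contained. Note, however, that the paper itself does not prove this lemma: it is quoted verbatim from the reference \cite{hilbert-min-dis}, so there is no ``paper's own proof'' to compare against here. Your reduction modulo $J=(\alpha_s t_i-\alpha_i t_s\mid i<s)$ (after normalizing $j=s$) correctly shows $I([\alpha])=J$, the isomorphism $S/J\cong K[t_s]$ immediately yields primality, Krull dimension $1$, degree $1$ (constant Hilbert function), and height $s-1$, and the irredundance of the intersection $\bigcap_{[\beta]\in\mathbb{X}} I([\beta])$ follows, as you say, from prime avoidance together with the fact that distinct primes of equal height cannot be nested. One small remark: since $j$ in the lemma's statement is the fixed index with $\alpha_j\neq 0$, the displayed generating set is already exactly your $J$ after relabeling, so the identity you invoke to obtain the ``symmetric'' generators $\alpha_{j'}t_{i'}-\alpha_{i'}t_{j'}$ for arbitrary $i'\neq j'$ is harmless extra content but not strictly needed for the statement as written.
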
  

An \textit{associated prime} of the ideal $I$ is a prime
ideal $\mathfrak{p}$ of $S$ of the form $\mathfrak{p}=(I\colon f)$
for some $f\in S$, where $(I\colon f):=\{g\in S\, \vert\, gf\subset I \}$ is a
\textit{colon ideal}. Note that an element $f\in S$ is a zero-divisor of $S/I$ if
and only if $(I\colon f)\neq I$. The radical of $I$ is denoted by ${\rm rad}(I)$.

\begin{theorem}{\cite[Lemma~2.1.19,
Corollary~2.1.30]{monalg-rev}}\label{zero-divisors} If $L$ is an
ideal of $S$ and
$L=\mathfrak{q}_1\bigcap\cdots\bigcap\mathfrak{q}_m$ is
an irredundant primary decomposition, then the set of zero-divisors
$\mathcal{Z}_S(S/L)$  of $S/L$ is equal to
$\bigcup_{i=1}^m {\rm rad}(\mathfrak{q}_i)$,
and ${\rm rad}(\mathfrak{q}_1),\ldots,{\rm rad}(\mathfrak{q}_m)$ 
are the associated primes of
$L$.
\end{theorem}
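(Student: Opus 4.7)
My plan is to separate the statement into two independent facts, each standard in commutative algebra, and then combine them. First I would show that the associated primes of $L$ are exactly $\{\mathrm{rad}(\mathfrak{q}_1),\ldots,\mathrm{rad}(\mathfrak{q}_m)\}$, and second that the set $\mathcal{Z}_S(S/L)$ of zero-divisors is the union of the associated primes of $L$. Combining these immediately gives the equality in the theorem.

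For the first fact, set $\mathfrak{p}_i=\mathrm{rad}(\mathfrak{q}_i)$. To show each $\mathfrak{p}_i$ is associated to $L$, I would exploit the irredundancy: pick $a_i\in\bigcap_{j\neq i}\mathfrak{q}_j$ with $a_i\notin\mathfrak{q}_i$. Then $(L\colon a_i)=(\mathfrak{q}_i\colon a_i)$, and using the defining property of a primary ideal I would check $\mathrm{rad}((\mathfrak{q}_i\colon a_i))=\mathfrak{p}_i$. A short iteration—replacing $a_i$ by $a_i\,b$ for suitable $b\in\mathfrak{p}_i$—yields an element whose colon ideal is exactly $\mathfrak{p}_i$, so $\mathfrak{p}_i$ is an associated prime. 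Conversely, if $\mathfrak{p}=(L\colon f)$ is prime with $f\notin L$, then $\mathfrak{p}=\bigcap_i(\mathfrak{q}_i\colon f)$; the primary property forces each factor to be either all of $S$ (when $f\in\mathfrak{q}_i$) or to have radical $\mathfrak{p}_i$, and since $\mathfrak{p}$ is prime, prime-avoidance gives $\mathfrak{p}=\mathfrak{p}_i$ for some $i$.

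For the second fact, the inclusion $\bigcup_i\mathfrak{p}_i\subset\mathcal{Z}_S(S/L)$ is clear: if $\mathfrak{p}_i=(L\colon f_i)$ with $f_i\notin L$, every $g\in\mathfrak{p}_i$ satisfies $gf_i\in L$ with $f_i\notin L$, so $g$ is a zero-divisor of $S/L$. For the reverse, let $f\in\mathcal{Z}_S(S/L)$ and pick $a\notin L$ with $fa\in L$; then $f\in(L\colon a)$. The family $\{(L\colon b)\mid b\notin L\}$ is nonempty and, by Noetherianity, has a maximal element $(L\colon a_0)$ containing $(L\colon a)$. A standard argument shows any such maximal element is prime, hence an associated prime, and so $f$ lies in some $\mathfrak{p}_i$.

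The main obstacle is the routine but delicate verification that the radicals appearing in an irredundant primary decomposition coincide with the associated primes; this is the Lasker--Noether second uniqueness theorem, and I would cite it directly in the paper rather than reprove it, since the excerpt already attributes the statement to \cite{monalg-rev}. The only mild care required is the prime-avoidance step in the converse direction of the first fact, where one must use that $\mathfrak{p}$ cannot be contained in a finite union of proper primes without being equal to one of them.
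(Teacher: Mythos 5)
The paper does not prove this theorem; it is cited directly from \cite{monalg-rev} as background, so there is no proof in the paper to compare against. Your proposal is the standard Lasker--Noether argument and it is essentially correct: the two facts (radicals of the primary components are exactly the associated primes; zero-divisors are the union of associated primes) combine to give the statement, and your sketch of each fact is sound. One small terminological slip: in the converse direction of the first fact, after taking radicals you have $\mathfrak{p}=\bigcap_{f\notin\mathfrak{q}_i}\mathfrak{p}_i$, and what you then invoke is not prime avoidance (which concerns an ideal contained in a finite \emph{union} of primes) but the complementary elementary fact that a prime ideal containing a finite \emph{product} (or intersection) of ideals must contain one of them; combined with $\mathfrak{p}\subset\mathfrak{p}_i$ for each $i$ in the intersection, this forces $\mathfrak{p}=\mathfrak{p}_{i_0}$ for some $i_0$. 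With that correction the argument is complete, and the ``short iteration'' you mention is the usual choice of a minimal $n$ with $\mathfrak{p}_i^n\subset(L:a_i)$ and of $b\in\mathfrak{p}_i^{n-1}\setminus(L:a_i)$ so that $(L:ba_i)=\mathfrak{p}_i$.
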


\begin{remark}\label{mar17-24} 
The set of all zero-divisors of $S/I$ is equal to 
$\bigcup_{[\beta]\in{\mathbb{X}}}I([\beta])$, where $I([\beta])$ is
the vanishing ideal of $[\beta]$. In particular, a homogeneous
polynomial $h\in S$
is regular on $S/I$ if and only if $h(\beta)\neq 0$ for all
$[\beta]\in\mathbb{X}$. This follows from Lemma~\ref{primdec-ix-a} and
Theorem~\ref{zero-divisors}. 
\end{remark}

The depth of $S/I$ is at most $\dim(S/I)$ 
\cite{monalg-rev}. The ring $S/I$ is
called \textit{Cohen--Macaulay} 
if ${\rm depth}(S/I)=\dim(S/I)$. We say $I$ is Cohen--Macaulay if
$S/I$ is Cohen--Macaulay.  

\begin{remark}\cite[Lemma~1.1]{Sta1}\label{gor-iff-ext} 
Let $\overline{K}$ be the algebraic closure of $K$ \cite[p.~11]{AM}, let 
$\overline{S}=\overline{K}[t_1,\ldots,t_s]$ be the extension of $S$ to
$\overline{S}$ and let
$\overline{I}=I\overline{S}$ be the extension of $I$ to
$\overline{S}$. 
The ideal 
$I$ is Gorenstein (resp. Cohen--Macaulay) 
if and only if $\overline{I}$ is Gorenstein (resp. Cohen--Macaulay). 
\end{remark}

\begin{lemma}\label{aug1-23} {\rm(a)} ${\rm depth}(S/I)=\dim(S/I)=1$, that is, $S/I$ is
Cohen--Macaulay. 

{\rm (b)} ${\rm pd}_S(S/I)={\rm ht}(I)=s-1$.
\end{lemma}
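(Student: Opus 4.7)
The plan is to prove (a) and (b) using the primary decomposition of $I$ furnished by Lemma~\ref{primdec-ix-a}, a hands-on construction of a homogeneous non-zero-divisor on $S/I$, and the Auslander--Buchsbaum formula~\eqref{AB}. First I would pin down the dimension and the height. By Lemma~\ref{primdec-ix-a}, $I=\bigcap_{[\beta]\in\mathbb{X}}I([\beta])$ is the primary (indeed prime) decomposition of $I$, and each $I([\beta])$ is prime of height $s-1$. Thus the associated primes of $I$ all have the same height $s-1$, so ${\rm ht}(I)=s-1$, and the standard identity for graded ideals in a polynomial ring gives $\dim(S/I)=\dim(S)-{\rm ht}(I)=1$.

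Next I would exhibit a homogeneous regular element on $S/I$ in order to show ${\rm depth}(S/I)\geq 1$. For each $i$, since $P_i\neq 0$ there is a coordinate $t_{j_i}$ with $t_{j_i}(P_i)\neq 0$; call this linear form $\ell_i$. The product $h:=\ell_1\cdots\ell_m$ is a homogeneous polynomial satisfying $h(P_i)\neq 0$ for every $i$. By Remark~\ref{mar17-24} this characterizes homogeneous regular elements on $S/I$, so $h\in\mathfrak{m}$ is a non-zero-divisor on $S/I$. Hence ${\rm depth}(S/I)\geq 1$, and combined with the general bound ${\rm depth}(S/I)\leq\dim(S/I)=1$ we obtain ${\rm depth}(S/I)=\dim(S/I)=1$, proving (a) and establishing that $S/I$ is Cohen--Macaulay.

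Finally, for (b) the Auslander--Buchsbaum formula~\eqref{AB} yields
\[
{\rm pd}_S(S/I)=\dim(S)-{\rm depth}(S/I)=s-1={\rm ht}(I),
\]
where the last equality was shown in the first step. The only nontrivial point is the construction of the regular element: because $K=\mathbb{F}_q$ is finite, one cannot assume that a single linear form avoids all $m$ points simultaneously, which is why I would use the product of $m$ individually chosen linear forms; everything else is a direct application of the results already stated in Section~\ref{section-prelim}.
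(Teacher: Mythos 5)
Your proof of the dimension and height statements, and the use of Auslander--Buchsbaum for part (b), match the paper's argument. However, there is a genuine gap in your construction of the homogeneous non-zero-divisor.

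You set $\ell_i := t_{j_i}$ where $t_{j_i}(P_i)\neq 0$, and claim that $h:=\ell_1\cdots\ell_m$ satisfies $h(P_i)\neq 0$ for every $i$. This fails: $h(P_i)=\prod_{k=1}^m t_{j_k}(P_i)$, and while the factor with $k=i$ is nonzero by choice, nothing forces the other factors $t_{j_k}(P_i)$ (for $k\neq i$) to be nonzero. For a concrete counterexample, take $s=3$, $P_1=(1,0,0)$, $P_2=(0,1,0)$; then $\ell_1=t_1$, $\ell_2=t_2$, and $h=t_1t_2$ vanishes at both points. So the $h$ you produce can lie in $\mathfrak{p}_i$ and hence be a zero-divisor by Remark~\ref{mar17-24}. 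Your instinct that finiteness of $K$ is the obstacle to finding a linear regular element is correct, but the proposed product does not circumvent it.

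The paper sidesteps this exactly because of the finite-field issue: it invokes Remark~\ref{gor-iff-ext} to pass to $\overline{K}$, where the Cohen--Macaulay property descends, and then uses the prime avoidance lemma over an infinite field (\cite[Proposition~5.6.1]{monalg-rev}) to find a linear form in $\mathfrak{m}\setminus\bigcup_i\mathfrak{p}_i$. If you want to stay over $\mathbb{F}_q$, you would need a homogeneous (generally nonlinear) regular element; one way is to pick, for each $i$, a homogeneous form $h_i$ of a fixed degree $d$ vanishing at all $P_j$ with $j\neq i$ but not at $P_i$ (e.g., a product of linear forms $\ell_{ij}$ with $\ell_{ij}(P_j)=0$, $\ell_{ij}(P_i)\neq 0$, which exist since $P_i\neq P_j$ in $\mathbb{P}^{s-1}$), and then take $h=\sum_i h_i$, so that $h(P_k)=h_k(P_k)\neq 0$ for all $k$. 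As written, though, your step asserting $h(P_i)\neq 0$ is unjustified and false in general.
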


\begin{proof} (a) In general, by \cite[Lemma~2.3.6]{monalg-rev}, one
has ${\rm depth}(S/I)\leq\dim(S/I)$. We may assume that $K$ is an infinite
field (Remark~\ref{gor-iff-ext}). Recall that $\mathfrak{p}_i$ is the
vanishing ideal of $[P_i]$. By Lemma~\ref{primdec-ix-a}, 
the associated primes of $I$ are
$\mathfrak{p}_1,\ldots,\mathfrak{p}_m$,  
and ${\rm ht}(\mathfrak{p}_i)$ is equal to $s-1$ for $i=1,\ldots,m$. Hence, 
$\mathfrak{m}=(t_1,\ldots,t_s)$ is not an associated prime of $I$.
Then, by Theorem~\ref{zero-divisors},
$\mathfrak{m}\not\subset\mathcal{Z}_S(S/I)=
\bigcup_{i=1}^m\mathfrak{p}_i$. Hence, as $K$ is infinite, by
\cite[Proposition~5.6.1]{monalg-rev} we 
can pick a linear form $h$ in $\mathfrak{m}$ which is regular on
$S/I$. Therefore, ${\rm depth}(S/I)\geq 1$.  Thus, $1\leq{\rm depth}(S/I)\leq\dim(S/I)=1$.

(b) By the formula of Eq.~\eqref{AB}, we have ${\rm pd}_S(S/I) + {\rm depth}(S/I) =
\dim(S)=s$. Thus, the
equality follows from part (a). 
\end{proof}

The Hilbert function $H_I$ is strictly increasing until it 
stabilizes. By \cite[Theorem~2.9]{min-dis-generalized},
\cite{geramita-cayley-bacharach}, there is an integer 
$r_0\geq 0$ such that 
\begin{equation}\label{sep18-23}
1=H_I(0)<H_I(1)<\cdots<H_I(r_0-1)<H_I(d)=|\mathbb{X}|
\end{equation}
for all $d\geq r_0$. The \textit{regularity index} of
$H_I$, denoted ${\rm reg}(H_I)$, is equal to $r_0$. 
As $S/I$ is Cohen--Macaulay and has Krull dimension $1$ (Lemma~\ref{aug1-23}), the
regularity ${\rm reg}(S/I)$ of $S/I$ is also equal to $r_0$
\cite[p.~256]{hilbert-min-dis}. 
Note that $r_0\geq 1$ because $|\mathbb{X}|\geq 2$. 

The \textit{Hilbert series} of $S/I$, denoted $F_I(x)$, is the
generating function of $\{H_I(d)\}_{d=0}^\infty$ and 
is given by 
$F_I(x):=\sum_{d=0}^\infty
H_I(d)x^d$. 
This series is a rational function of the form
\begin{equation}\label{hs}
F_I(x)=\frac{h(x)}{1-x},
\end{equation}
where $h(x)$ is a polynomial with non-negative integer coefficients 
\cite{Sta1}, \cite[Theorem~5.2.6]{monalg-rev}, of degree $r_0$
\cite[Corollary B.4.1]{Vas1}, which is called the h-\textit{polynomial} of
$S/I$. 

\begin{definition}\label{h-vector-def} The vector
formed with the coefficients of $h(x)$ is called the
$\mathrm{h}$-\textit{vector} of 
$S/I$ and is denoted by
$\mathrm{h}(S/I)=(h_0,\ldots,h_{r_0})$. The h-vector is
\textit{symmetric} if $h_d=h_{r_0-d}$ for all $0\leq d\leq r_0$.
\end{definition}

A result of Stanley shows that Gorenstein rings have symmetric
h-vectors \cite[Theorems~4.1 and 4.2]{Sta1}. The
degree of the ring $S/I$, which is being denoted by $\deg(S/I)$, is equal to $|\mathbb{X}|$ and
$h(1)=h_0+\cdots+h_{r_0}=|\mathbb{X}|$ (\cite[p.~251]{hilbert-min-dis}, \cite[Remark~5.1.7]{monalg-rev}).

The ideal $I$ is called a \textit{complete intersection} if $I$ can be generated by
${\rm ht}(I)=s-1$ homogeneous polynomials.

\begin{proposition}\cite[Corollary~21.19]{Eisen}\label{ci-gor} If $I$ is a complete
intersection, then $I$ is Gorenstein.
\end{proposition}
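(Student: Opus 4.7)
The plan is to construct the minimal graded free resolution of $S/I$ explicitly via a Koszul complex and read the Gorenstein condition off of its terminal term. Since the excerpt defines Gorenstein purely in terms of the last module of the minimal free resolution being a single twist $S(-j)$ (that is, $b_{g,j}=1$ for a unique $j$), this Koszul route is the most direct.

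First, I would fix a homogeneous system of generators $f_1,\ldots,f_{s-1}$ of $I$ with $\deg(f_i)=d_i$; such a system exists because $I$ is a complete intersection and ${\rm ht}(I)=s-1$ by Lemma~\ref{aug1-23}(b). Next, I would argue that $f_1,\ldots,f_{s-1}$ form a regular sequence on $S$: the polynomial ring $S$ is Cohen--Macaulay, and in a Cohen--Macaulay ring any sequence of homogeneous elements generating an ideal whose height equals its length is automatically a regular sequence. Then I would invoke the standard fact that the Koszul complex
\[
K_\bullet(f_1,\ldots,f_{s-1};S)\colon\ 0\to S\bigl(-\textstyle\sum_{i=1}^{s-1}d_i\bigr)\to\cdots\to \bigoplus_{i=1}^{s-1} S(-d_i)\to S\to S/I\to 0
\]
is a free resolution of $S/I$ whenever $f_1,\ldots,f_{s-1}$ is a regular sequence, and that this resolution is automatically minimal because each $f_i$ lies in $\mathfrak{m}=(t_1,\ldots,t_s)$, forcing every entry of every Koszul differential to lie in $\mathfrak{m}$.

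Finally, I would read the Betti numbers off the terminal term: the last module of this Koszul resolution is the rank-one free module $S(-d_1-\cdots-d_{s-1})$, so in the notation of Subsection~\ref{CA} one has ${\rm pd}_S(S/I)=g=s-1$, the unique $j$ with $b_{g,j}\neq 0$ is $j=d_1+\cdots+d_{s-1}$, and $b_{g,j}=1$. By the definition recalled in Subsection~\ref{CA} this is exactly the Gorenstein condition. The only nontrivial inputs are the regularity of the sequence $f_1,\ldots,f_{s-1}$ and the exactness of the Koszul complex on a regular sequence; both are classical and can be cited from \cite{Eisen} or \cite{monalg-rev}, and neither presents a substantive obstacle once the height equality ${\rm ht}(I)=s-1$ is in hand from Lemma~\ref{aug1-23}.
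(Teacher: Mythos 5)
The paper states this proposition as a cited result (Eisenbud, Corollary~21.19) and does not supply its own proof, so there is no in-paper argument to compare against. Your Koszul-complex argument is a correct and standard self-contained proof: the generators $f_1,\ldots,f_{s-1}$ form a regular sequence because $S$ is Cohen--Macaulay and they generate an ideal of height equal to their number; the resulting Koszul resolution is automatically minimal since every nonzero differential entry is $\pm f_i\in\mathfrak{m}$; and the terminal term $S(-d_1-\cdots-d_{s-1})$ has rank one, so $g=s-1$, there is a unique $j$ with $b_{g,j}\neq 0$, and $b_{g,j}=1$, which is precisely the Gorenstein condition as defined in Subsection~\ref{CA}. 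This gives the paper a self-contained justification in place of the external citation, at the modest cost of invoking the exactness of the Koszul complex on a regular sequence.
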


\begin{lemma}\label{h-vector-H} If $\mathrm{h}(S/I)=(h_0,\ldots,h_{r_0})$ is
the $\mathrm{h}$-vector of
$S/I$, then 
$$
h_d=H_{I}(d)-H_{I}(d-1)\ \mbox{ for all }\ 0\leq d\leq r_0.
$$
\end{lemma}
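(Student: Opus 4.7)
The plan is to derive the identity directly from the Hilbert series formula in Eq.~\eqref{hs}. By definition, $F_I(x)=\sum_{d\geq 0}H_I(d)x^d$, and by Eq.~\eqref{hs} this rational function can be written as $h(x)/(1-x)$, where $h(x)=h_0+h_1x+\cdots+h_{r_0}x^{r_0}$ is the $\mathrm{h}$-polynomial of $S/I$. Multiplying both sides by $1-x$, I would obtain
\[
h(x)\;=\;(1-x)F_I(x)\;=\;\sum_{d\geq 0}H_I(d)x^d-\sum_{d\geq 0}H_I(d)x^{d+1}.
\]

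After re-indexing the second sum and using the convention $H_I(-1)=0$ stated right after Eq.~\eqref{hf-def}, this collapses to $\sum_{d\geq 0}\bigl(H_I(d)-H_I(d-1)\bigr)x^d$. Comparing coefficients degree by degree then yields $h_d=H_I(d)-H_I(d-1)$ for every $d\geq 0$, and in particular for $0\leq d\leq r_0$, which is the claimed identity.

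For internal consistency, I would also check that the formula correctly predicts the degree of $h(x)$: by Eq.~\eqref{sep18-23}, $H_I$ is strictly increasing on $[0,r_0]$ and constant equal to $|\mathbb{X}|$ for $d\geq r_0$, so the differences $H_I(d)-H_I(d-1)$ are positive for $1\leq d\leq r_0$ and vanish for $d>r_0$. This gives $\deg(h)=r_0$, matching \cite[Corollary B.4.1]{Vas1} cited in the paragraph before Definition~\ref{h-vector-def}, and confirms that the $\mathrm{h}$-vector has exactly $r_0+1$ entries as in Definition~\ref{h-vector-def}.

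There is no real obstacle here; the argument is a one-line manipulation of formal power series, and the only substantive input is the rationality statement Eq.~\eqref{hs}, which is quoted from \cite{Sta1,monalg-rev} and so may be freely used.
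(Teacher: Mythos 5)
Your proof is correct and is actually more direct than the paper's. Both arguments ultimately pivot on the single identity $h(x)=(1-x)F_I(x)$, but you apply it as a pure formal--power--series computation: expand $(1-x)F_I(x)$, re-index, and read off coefficients. The paper instead passes to the algebraic closure $\overline{K}$, picks a linear form $h$ regular on $\overline{S}/\overline{I}$ (available because $\overline{K}$ is infinite and $\overline{S}/\overline{I}$ is Cohen--Macaulay), takes the Hilbert series of the short exact sequence
$0\to(\overline{S}/\overline{I})(-1)\xrightarrow{h}\overline{S}/\overline{I}\to\overline{S}/(\overline{I},h)\to 0$
to obtain $F_A(x)=(1-x)F_{\overline{I}}(x)$, and then identifies $h_d=\dim_{\overline{K}}A_d=H_{\overline{I}}(d)-H_{\overline{I}}(d-1)$. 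The two routes yield the same identity; the extra machinery in the paper buys the structural interpretation that the $\mathrm{h}$-vector is the Hilbert function of an Artinian reduction over $\overline{K}$, a viewpoint the authors reuse later (e.g.\ in Theorem~\ref{socle-gorenstein}), whereas your argument is shorter, avoids any base change or choice of regular element, and makes clear that the lemma is a purely formal consequence of Eq.~\eqref{hs} together with the convention $H_I(-1)=0$. Your closing remark that the differences vanish for $d>r_0$, matching $\deg h=r_0$, is a correct consistency check but is not needed for the proof.
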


\begin{proof} Let $\overline{K}$ be the algebraic closure of $K$ \cite[p.~11]{AM}. We set
$\overline{S}:=S\otimes_K\overline{K}=\overline{K}[t_1,\ldots,t_s]$
and $\overline{I}:=I\overline{S}$. From \cite[Lemma~1.1]{Sta1},
$H_I(d)=H_{\overline{I}}(d)$ for all $d\geq 0$. Since the field $\overline{K}$
is infinite and $\overline{S}/\overline{I}$ is Cohen--Macaulay
(Remark~\ref{gor-iff-ext}, 
Lemma~\ref{aug1-23}), there is a linear form $h\in\overline{S}$ that is regular
on $\overline{S}/\overline{I}$ \cite[Lemma 3.1.28]{monalg-rev}. 
Taking Hilbert series in the exact
sequence 
\begin{equation}\label{jun7-23}
0\longrightarrow
(\overline{S}/\overline{I})(-1)\stackrel{h}{\longrightarrow}
\overline{S}/\overline{I}\longrightarrow\overline{S}/(\overline{I},h) 
\longrightarrow 0,
\end{equation}
we get $F_{\overline{I}}(x)=xF_{\overline{I}}(x)+F_A(x)$, where $F_A(x)$
 and $F_{\overline{I}}(x)$ are the Hilbert series of
 $A=\overline{S}/(\overline{I},h)$ and $\overline{S}/\overline{I}$,
 respectively. Then, using Eq.~\eqref{hs}, we obtain
$$
F_A(x)=(1-x)F_{\overline{I}}(x)=(1-x)F_I(x)=h_0+h_1x+\cdots+
h_{r_0}x^{r_0}.
$$ 
\quad Therefore, by Eq.~\eqref{jun7-23}, we get
$$
h_d=\dim_{\overline{K}}(A_d)=H_{\overline{I}}(d)-H_{\overline{I}}(d-1)=H_{I}(d)-H_{I}(d-1)
$$
for all $0\leq d\leq r_0$, and the proof is complete.
\end{proof}

\begin{proposition}\label{jul5-23} Let $\mathrm{h}(S/I)=(h_0,\ldots,h_{r_0})$ be the
$\mathrm{h}$-vector of
$S/I$. The following conditions are equivalent.
\begin{enumerate}
\item[(a)] The $\mathrm{h}$-vector of $S/I$ is symmetric, that is,
$h_d=h_{r_0-d}$ for all $0\leq d\leq r_0$.
\item[(b)] $H_I(d)+H_I(r_0-d-1)=|\mathbb{X}|$ for all $0\leq d\leq r_0$.
\end{enumerate}
\end{proposition}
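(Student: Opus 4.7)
The plan is to establish the equivalence by translating everything through Lemma~\ref{h-vector-H}, which gives the telescoping identity $h_d=H_I(d)-H_I(d-1)$ for $0\le d\le r_0$. This immediately yields the partial sum formula $H_I(d)=\sum_{j=0}^{d}h_j$ valid for all $0\le d\le r_0$ (using the convention $H_I(-1)=0$), and we also know $\sum_{j=0}^{r_0}h_j=|\mathbb{X}|$ from the equality $h(1)=\deg(S/I)=|\mathbb{X}|$ recalled just before Proposition~\ref{ci-gor}. With these three ingredients in hand, the equivalence reduces to an elementary manipulation of partial sums.

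For the direction (a)$\Rightarrow$(b), I would write
\begin{equation*}
H_I(d)+H_I(r_0-d-1)=\sum_{j=0}^{d}h_j+\sum_{j=0}^{r_0-d-1}h_j,
\end{equation*}
and apply symmetry $h_j=h_{r_0-j}$ to the first sum, turning it into $\sum_{j=0}^{d}h_{r_0-j}=\sum_{i=r_0-d}^{r_0}h_i$. Combined with the second sum this is the complete sum $\sum_{j=0}^{r_0}h_j=|\mathbb{X}|$, as required. The only thing to check carefully is that the index ranges are disjoint and cover $\{0,\ldots,r_0\}$, which is routine provided $0\le d\le r_0$.

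For the direction (b)$\Rightarrow$(a), I would fix $0\le d\le r_0$ and compute
\begin{equation*}
h_d-h_{r_0-d}=\bigl(H_I(d)-H_I(d-1)\bigr)-\bigl(H_I(r_0-d)-H_I(r_0-d-1)\bigr).
\end{equation*}
Rearranging and invoking (b) once with index $d$ and once with index $d-1$ (both of which lie in the admissible range after handling the endpoint $d=0$ separately via $H_I(-1)=0$ and $H_I(r_0)=|\mathbb{X}|$) shows that each parenthesized expression equals $|\mathbb{X}|$, so their difference vanishes and $h_d=h_{r_0-d}$.

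The only genuine obstacle is bookkeeping at the boundary: verifying the formula $H_I(d)=\sum_{j=0}^{d}h_j$ at $d=0$ and the symmetry relation at $d=0$ and $d=r_0$, which are all settled by the conventions $H_I(-1)=0$, $H_I(r_0)=|\mathbb{X}|$, and the stabilization inequalities of Eq.~\eqref{sep18-23}. Apart from that, the argument is just a telescoping computation and presents no difficulty.
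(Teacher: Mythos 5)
Your proposal is correct, and it rests on the same key ingredient as the paper's proof, namely Lemma~\ref{h-vector-H} giving $h_d=H_I(d)-H_I(d-1)$ together with the fact that $\sum_{j=0}^{r_0}h_j=|\mathbb{X}|$; your (b)$\Rightarrow$(a) direction is the same rearrangement the paper uses. The only difference is in (a)$\Rightarrow$(b): you replace the paper's induction on $d$ with a direct partial-sum argument, writing $H_I(d)=\sum_{j=0}^d h_j$ and reflecting the index via symmetry so that the two sums disjointly cover $\{0,\ldots,r_0\}$. This is a clean and slightly more transparent way to organize the same computation, and it avoids the induction entirely, but it does not change the substance or generality of the argument.
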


\begin{proof} (a)$\Rightarrow$(b) Recall, $r_0={\rm reg}(S/I)$.
Assume that $\mathrm{h}(S/I)$ is symmetric.  
Then, by Lemma~\ref{h-vector-H}, one has
\begin{equation}\label{jun6-23}
h_d=H_I(d)-H_I(d-1)=H_I(r_0-d)-H_I(r_0-d-1)=h_{r_0-d}\ \mbox{ for all }\
0\leq d\leq r_0.
\end{equation}
\quad To show the equality in (b) we use induction on
$d$. If
$d=0$, by Eq.~\eqref{jun6-23}, one has
$$
H_I(0)=H_I(0)-H_I(-1)=H_I(r_0)-H_I(r_0-1)=|\mathbb{X}|-H_I(r_0-1)
$$
and equality holds in (b) for $d=0$. Assume $r_0>d-1\geq 0$ and that equality
holds in (b) for $d-1$, that is, we have the equality 
\begin{equation}\label{jun6-23-1}
H_I(d-1)+H_I(r_0-d)=|\mathbb{X}|.
\end{equation}
\quad Then, by Eqs.~\eqref{jun6-23}-\eqref{jun6-23-1}, one has
$$
H_I(d)+H_I(r_0-d-1)=H_I(d-1)+H_I(r_0-d)=|\mathbb{X}|,
$$
and equality holds in (b) for $d$.

(b)$\Rightarrow$(a) Assume that $H_I(d)+H_I(r_0-d-1)=|\mathbb{X}|$
for all $0\leq d\leq r_0$. Then
$$
H_I(d-1)+H_I(r_0-d)=H_I(d)+H_I(r_0-d-1)\mbox{ for all }0\leq d\leq r_0
$$
and, by Lemma~\ref{h-vector-H},
$h_d=H_I(d)-H_I(d-1)=H_I(r_0-d)-H_I(r_0-d-1)=h_{r_0-d}$. Thus, the
$\mathrm{h}$-vector of $S/I$ is symmetric and the proof is complete.
\end{proof}

Let ${\rm Ass}(I)=\{\mathfrak{p}_1,\ldots,\mathfrak{p}_m\}$ be the 
set of \textit{associated primes} of 
$I$, that is, $\mathfrak{p}_i$ is the vanishing 
ideal $I([P_i])$ of $[P_i]$ and $I=\bigcap_{j=1}^m\mathfrak{p}_j$ is the primary
decomposition of $I$ (Lemma~\ref{primdec-ix-a}). 
The v-{\em number} of $I$, denoted ${\rm
v}(I)$, is the following algebraic invariant of
$I$ that was introduced in \cite{min-dis-generalized}:
\begin{equation}\label{jun17-23}
{\rm v}(I):=\min\{d\geq 0 \mid\exists\, f 
\in S_d \mbox{ and }\mathfrak{p} \in {\rm Ass}(I) \mbox{ with } (I\colon f)
=\mathfrak{p}\}.
\end{equation}
\quad One can define the v-number of $I$ locally at each associated 
prime $\mathfrak{p}_i$ of $I$\/:
\begin{equation}\label{jun17-23-1}
{\rm v}_{\mathfrak{p}_i}(I):=\mbox{min}\{d\geq 0\mid \exists\, f\in S_d
\mbox{ with }(I\colon f)=\mathfrak{p}_i\}. 
\end{equation}
\quad Thus, one has the equality ${\rm v}(I)={\rm min}\{{\rm
v}_{\mathfrak{p}_i}(I)\}_{i=1}^m$.  
The v-number of $I$ can be computed using 
the following description for the v-number of $I$ locally 
at $\mathfrak{p}_i$
\cite[Proposition~4.2]{min-dis-generalized}:
\begin{equation*}
{\rm
v}_{\mathfrak{p}_i}(I)=\alpha\!\left((I\colon\mathfrak{p}_i)/{I}\right),\
i=1,\ldots,m,
\end{equation*}
where $\alpha\!\left((I\colon\mathfrak{p}_i)/{I}\right)$ is the 
minimum degree of the non-zero elements of
$(I\colon\mathfrak{p}_i)/{I}$ (Example~\ref{local-duality-example}). 
The degree $\deg_\mathbb{X}([P_i])$ of
$[P_i]$, in the
sense of \cite[Definition~2.1]{geramita-cayley-bacharach}, is equal to
${\rm v}_{\mathfrak{p}_i}(I)$. 

\subsection{Coding theory}\label{CT} In this subsection we introduce
some generalities on linear codes and Reed--Muller-type codes. 
\subsubsection{Linear codes} Let $K=\mathbb{F}_q$ be a finite
field and let $C$ be an  
$[m,k]$-\textit{linear
code} of {\it length} $m$ and {\it dimension} $k$, 
that is, $C$ is a linear subspace of $K^m$ with $k=\dim_K(C)$. Let
$1\leq r\leq k$ be an integer.   
Given a linear subspace $D$ of $C$, the \textit{support} of
$D$, denoted $\chi(D)$, is the set of nonzero positions of $D$, that is,
$$
\chi(D):=\{i\,\colon\, \exists\, (a_1,\ldots,a_m)\in D,\, a_i\neq 0\}.
$$
\quad The $r$-th \textit{generalized Hamming weight} of $C$, denoted
$\delta_r(C)$, is given by \cite[p.~283]{Huffman-Pless}:
\begin{equation}\label{ghw}
\delta_r(C):=\min \{|\chi(D)|\colon D \mbox{ is a subspace of }C,\, \dim_K (D)=r\}.
\end{equation}
\quad 
As usual we call the set $\{\delta_1(C), \ldots, \delta_k(C)\}$ the 
\textit{weight hierarchy} of the linear code $C$. 
The $1$st Hamming weight of $C$ is the \textit{minimum
distance} $\delta(C)$ of $C$, that is, one has
$$
\delta_1(C)=\delta(C)=\min\{\|\alpha\|\colon \alpha \in C \setminus
\{0\}\},
$$ 
where $\|\alpha\|$ is the (Hamming) weight of $\alpha$, that is,
$\|\alpha\|$ is the number of non-zero
entries of $\alpha$. 
To determine the
minimum distance
is essential to find good error-correcting codes
\cite{MacWilliams-Sloane}. 

Generalized Hamming weights of linear codes are parameters
of interest in many applications 
\cite{rth-footprint,Pellikaan,Johnsen,olaya,schaathun-willems,tsfasman,wei,Yang}
and they have been nicely related to  
the minimal graded free resolution of the ideal of cocircuits of the matroid
of a linear code \cite{JohVer,Johnsen}, to the nullity
function of the dual matroid of a linear code \cite{wei}, and to the
enumerative combinatorics of linear codes 
\cite{Britz,Klove-1992,MacWilliams-Sloane}.
Because of this, 
their study has attracted considerable attention, 
but determining them is in general a
difficult problem. Generalized Hamming weights were 
introduced by Helleseth, Kl{\o}ve and Mykkeltveit 
\cite{helleseth} and were first used systematically by Wei 
\cite{wei}.

The \textit{dual} of $C$, denoted $C^\perp$, is the
set of all $\alpha\in K^m$ such that $\langle
\alpha,\beta\rangle=0$ for all $\beta\in C$, where
$\langle\ ,\, \rangle$ is the ordinary inner product in $K^m$. 
The dual $C^\perp$ of $C$ is a linear code
of length $m$ and dimension
$m-k$ \cite[Theorem~1.2.1]{Huffman-Pless}. 
A linear code $C\subset K^m$ is called \textit{self dual} if
$C=C^\perp$ and \textit{self orthogonal} if $C\subset C^\perp$. 

\begin{definition}\label{me}
We say that two linear codes $C_1,C_2$ in $K^m$
are \textit{monomially equivalent} if there is a vector $\beta=(\beta_1,\ldots,\beta_m)$
in $K^m$ such that $\beta_i\neq 0$ for all $i$ and 
$$C_2 = \beta\cdot C_1=\{\beta\cdot c \mid c\in C_1\},$$
where $\beta\cdot c$ is the vector given by
$(\beta_1c_1,\ldots,\beta_mc_m)$ for 
$c=(c_1,\ldots,c_m)\in C_1$. 
\end{definition}
Monomially equivalent codes have the same basic parameters.

\begin{remark}\label{dual-equation}
Monomial equivalence of linear codes is an equivalence relation. Let
$C_1,C_2$ be two linear codes in $K^m$ and let
$\beta=(\beta_1,\ldots,\beta_m)\in(K^*)^m$.  
If $C_2=\beta\cdot C_1$, then
$C_1=\beta^{-1}\cdot C_2$ and $(\beta\cdot
C_2)^\perp=\beta^{-1}\cdot C_2^\perp$,  where
$\beta^{-1}=(\beta_1^{-1},\ldots,\beta_m^{-1})$. If 
$C_1^\perp=\beta\cdot C_2$, then
$C_2^\perp=\beta\cdot C_1$.

If we multiply a generator matrix of $C_1$ to
the right by the $m\times m$ diagonal matrix with entries on the main
diagonal 
$\beta_1,\ldots,\beta_m$, we
obtain a generator matrix for $C_2$. This is part of what monomial 
equivalent codes means: multiply by a monomial matrix, which is a
diagonal 
matrix times a permutation matrix.
\end{remark}

\subsubsection{Projective Reed--Muller-type codes}\label{RM} 
Fix a degree $d\geq
1$ and choose a set of representatives
$\mathcal{P}=\{P_1,\ldots,P_m\}$ for the points of $\mathbb{X}$. 
There is a $K$-linear map given by  
\begin{equation}\label{evaluation-map}
{\rm ev}_d\colon S_d\rightarrow K^{m},\ \ \ \ \ 
f\mapsto
\left(f(P_1),\ldots,f(P_m)\right).
\end{equation}
\quad This map is called the \textit{evaluation map} of
degree $d$. The image of $S_d$ under ${\rm ev}_d$, denoted
$C_\mathbb{X}(d)$, is called a {\it projective Reed--Muller-type code\/} of
degree $d$ on $\mathbb{X}$ \cite{duursma-renteria-tapia,GRT}. If
$\mathcal{L}$ is a $K$-linear subspace of $S_d$, the image of
$\mathcal{L}$ under ${\rm ev}_d$, denoted $\mathcal{L}_\mathbb{X}$, is
called a \textit{projective evaluation code} of degree $d$ on $\mathbb{X}$. The
points in $\mathbb{X}$ are called evaluation points. The {\it basic parameters} of the linear
code $C_\mathbb{X}(d)$ are its {\it length\/} $|\mathbb{X}|$, {\it
dimension\/} 
$\dim_K(C_\mathbb{X}(d))$, and {\it minimum distance\/} 
$\delta_\mathbb{X}(d):=\delta(C_\mathbb{X}(d))$. 

The basic parameters of $C_\mathbb{X}(d)$ can be
expressed in terms of the
algebraic invariants of $S/I$:  
\begin{itemize}
\item[\rm(i)] $|\mathbb{X}|=\deg(S/I)$ \cite[p.~83]{algcodes},   
\item[\rm(ii)] $\dim_K(C_\mathbb{X}(d))=H_I(d)$ \cite[p.~83]{algcodes},
\item[\rm(iii)] $\delta_\mathbb{X}(d)=\min\{\deg(S/(I\colon f))\mid
f\in S_d\setminus I\}$ \cite[Theorem~4.4]{hilbert-min-dis}. 
\end{itemize}

The v-number of the vanishing ideal $I$ of $\mathbb{X}$ is related to
the asymptotic behavior of
$\delta_\mathbb{X}(d)$ for $d\gg 0$. By
\cite[Theorem~5.3]{min-dis-generalized} and
\cite[Theorem~4.5]{rth-footprint}, there is $R_1\in \mathbb{N}$ such that  
$$
|\mathbb{X}|=\delta_\mathbb{X}(0)>\delta_\mathbb{X}(1)>\cdots>
\delta_\mathbb{X}(R_1-1)>\delta_\mathbb{X}(R_1)=\delta_\mathbb{X}(d)=1\
\mbox{ for all }\ d\geq R_1.
$$
\quad The number $R_1$, denoted ${\rm reg}(\delta_\mathbb{X})$, is called the 
\textit{regularity index} of $\delta_\mathbb{X}$. By the 
Singleton bound \cite[p.~71]{Huffman-Pless}, the Hilbert function of
$S/I$ at $d$ and  
the minimum distance of $C_\mathbb{X}(d)$ are related by 
$$
1\leq\delta_\mathbb{X}(d)\leq|\mathbb{X}|-H_I(d)+1\ \mbox{ for all }\
d\geq 0
$$
and, as a consequence, one has ${\rm reg}(\delta_\mathbb{X})\leq{\rm
reg}(H_I)$. 

The vanishing ideal
of $\mathbb{X}$ and the parameters of $C_\mathbb{X}(d)$ are independent of the set of representatives
$\mathcal{P}=\{P_1,\ldots,P_m\}$ that we
choose for the points of $\mathbb{X}$:

\begin{remark}\label{changing-rep} 
Let $\mathcal{P}=\{P_1,\ldots,P_m\}$ and
$\mathcal{Q}=\{Q_1,\ldots,Q_m\}$ be two sets of representatives for
the points of $\mathbb{X}$ and let ${\rm ev}_d$ and ${\rm ev}_d'$ be
the evaluation maps of degree $d$ of Eq.~\eqref{evaluation-map},  
defined by $\mathcal{P}$ and
$\mathcal{Q}$, respectively. If $P_i=\lambda_iQ_i$, $\lambda_i\in
K^*$, for $i=1,\ldots,m$, then
$$
{\rm ev}_d(S_d)=(\lambda_1^d,\ldots,\lambda_m^d)\cdot{\rm ev}_d'(S_d),
$$
that is, ${\rm ev}_d(S_d)$ is monomially equivalent to ${\rm
ev}_d'(S_d)$, and the basic parameters and generalized Hamming weights
of ${\rm ev}_d(S_d)$ and ${\rm ev}_d'(S_d)$ are the same. Thus, 
the parameters of $C_\mathbb{X}(d)$ are independent of the set of representatives 
that we choose for the points of $\mathbb{X}$. This can also be
seen using the fact that the parameters of $C_\mathbb{X}(d)$ depend only on $I(\mathbb{X})$
\cite[Theorem~4.5]{rth-footprint} and observing that 
$f(P_i)=\lambda_i^ef(Q_i)$ for any homogeneous polynomial $f\in S$ of
degree $e$. This proves that $I(\mathbb{X})$ is independent
of the set of representatives we
choose for the points of $\mathbb{X}$.
\end{remark}

\section{Duality criteria for projective Reed--Muller-type codes}
\label{duality-section}

To avoid repetitions, we continue to employ 
the notations and definitions used in
Section~\ref{section-prelim}. 
\begin{definition}\cite{sorensen}\label{indicator-f} 
A homogeneous polynomial $f\in S$ is called an
\textit{indicator function} of $[P_i]$ if $f(P_i)\neq 0$ and $f(P_j)=0$ if
$j\neq i$.  
\end{definition}

An indicator
function is also called a separator
\cite[Definition~2.1]{geramita-cayley-bacharach}.
For other types of indicator functions,
see \cite{Eduardo-Saenz-indicator}. 
An indicator function $f$ of $[P_i]$ can be normalized to have value 
$1$ at $[P_i]$ by considering $f/f(P_i)$. 
The following lemma lists basic properties of indicator functions of
projective points. 

\begin{lemma}\label{if} {\rm (a)} The set of indicator functions of $[P_i]$ is the set of
homogeneous polynomials in $(I\colon\mathfrak{p}_i)\setminus I$,
where 
$\mathbb{X}=\{[P_1],\ldots,[P_m]\}$, $I=I(\mathbb{X})$, and $\mathfrak{p}_i$ is the
vanishing ideal of $[P_i]$.

{\rm(b)} The least degree of an indicator function of $[P_i]$ is equal 
to ${\rm v}_{\mathfrak{p}_i}(I)$.

{\rm(c)} If for each $i$, $h_i$ is an indicator function of $[P_i]$, 
then $\{h_i\}_{i=1}^m$ is $K$-linearly independent.

{\rm(d)} If $r_0={\rm reg}(H_I)$, then ${\rm
v}_{\mathfrak{p}_i}(I)\leq r_0$ for all $i$ and ${\rm
v}_{\mathfrak{p}_i}(I)=r_0$ for some $i$.

{\rm(e)} If $f,g$ are indicator functions of 
$[P_i]$ in $K\Delta_\prec(I)_d$, then $g(P_i)f=f(P_i)g$.
\end{lemma}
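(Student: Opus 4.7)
The plan is to handle the five items sequentially, leaning on the primary decomposition of $I$ from Lemma~\ref{primdec-ix-a}, the evaluation map of Eq.~\eqref{evaluation-map}, and the strict-increase statement of Eq.~\eqref{sep18-23}. For part (a), I would compute $(I\colon\mathfrak{p}_i)=\bigcap_{j=1}^m(\mathfrak{p}_j\colon\mathfrak{p}_i)$ using $I=\bigcap_{j}\mathfrak{p}_j$. Since the $[P_j]$ are distinct, $\mathfrak{p}_i\not\subset\mathfrak{p}_j$ for $j\ne i$, so $(\mathfrak{p}_j\colon\mathfrak{p}_i)=\mathfrak{p}_j$ for $j\ne i$, while $(\mathfrak{p}_i\colon\mathfrak{p}_i)=S$; hence $(I\colon\mathfrak{p}_i)=\bigcap_{j\ne i}\mathfrak{p}_j$, and a homogeneous $f$ lies in $(I\colon\mathfrak{p}_i)\setminus I$ iff $f(P_j)=0$ for $j\ne i$ and $f(P_i)\ne 0$. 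Part (b) is then immediate from (a) together with the description ${\rm v}_{\mathfrak{p}_i}(I)=\alpha((I\colon\mathfrak{p}_i)/I)$ quoted from \cite[Proposition~4.2]{min-dis-generalized}, since $(I\colon\mathfrak{p}_i)/I$ is graded and its minimum nonzero degree is attained by a homogeneous representative.

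For part (c), evaluating $\sum_i c_i h_i=0$ at $P_j$ gives $c_j h_j(P_j)=0$, whence $c_j=0$. For part (d), the bound ${\rm v}_{\mathfrak{p}_i}(I)\leq r_0$ follows because $H_I(r_0)=m$ makes the evaluation map ${\rm ev}_{r_0}\colon S_{r_0}\to K^m$ surjective, so the preimage of the $i$-th standard basis vector is an indicator function of $[P_i]$ of degree $r_0$. For the second assertion I argue by contradiction: if ${\rm v}_{\mathfrak{p}_i}(I)<r_0$ for every $i$, then each $[P_i]$ admits an indicator function $h_i$ of degree $d_i\leq r_0-1$. Choosing a variable $t_{k_i}\notin\mathfrak{p}_i$ (possible since $P_i$ has a nonzero coordinate) and multiplying $h_i$ by a suitable power of $t_{k_i}$ produces an indicator function $\tilde h_i\in S_{r_0-1}$ of $[P_i]$. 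By the same evaluation argument as in (c), the images of $\tilde h_1,\ldots,\tilde h_m$ in $S_{r_0-1}/I_{r_0-1}$ are $K$-linearly independent, forcing $H_I(r_0-1)\geq m=|\mathbb{X}|$ and contradicting Eq.~\eqref{sep18-23}.

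For part (e), set $p:=g(P_i)f-f(P_i)g\in K\Delta_\prec(I)_d$. Evaluating at $P_i$ yields $g(P_i)f(P_i)-f(P_i)g(P_i)=0$, while evaluating at $P_j$ with $j\ne i$ gives $0$ since $f$ and $g$ vanish there; hence $p\in I$. Because the canonical projection $K\Delta_\prec(I)\to S/I$ is a $K$-linear isomorphism by Macaulay's basis theorem recalled in Section~\ref{section-prelim}, $p\in K\Delta_\prec(I)\cap I=(0)$, so $g(P_i)f=f(P_i)g$. The only nonroutine step is the second claim of (d); the crux there is the degree-raising device (multiplying an indicator function by a linear form nonvanishing at $P_i$ preserves the indicator property and increases the degree by $1$), which converts the strict monotonicity of $H_I$ below $r_0$ into the desired lower bound on some ${\rm v}_{\mathfrak{p}_i}(I)$.
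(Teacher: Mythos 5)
Your proof is correct and follows essentially the same route as the paper: compute $(I\colon\mathfrak{p}_i)=\bigcap_{j\neq i}\mathfrak{p}_j$ for (a), reduce (b) to the characterization of the local v-number, evaluate at the $P_j$ for (c), use surjectivity of ${\rm ev}_{r_0}$ plus the degree-raising trick and linear independence in $(S/I)_{r_0-1}$ for (d), and use Macaulay's basis theorem (equivalently the initial-monomial argument) for (e). The only cosmetic differences are that in (b) you invoke ${\rm v}_{\mathfrak{p}_i}(I)=\alpha\left((I\colon\mathfrak{p}_i)/I\right)$ rather than the definition via $(I\colon f)=\mathfrak{p}_i$, and in (d) you show linear independence of the cosets directly instead of first passing to standard representatives; both shortcuts are sound and do not change the substance of the argument.
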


\begin{proof} (a) Recall the equality
$I=\bigcap_{j=1}^m\mathfrak{p}_j$ (Lemma~\ref{primdec-ix-a}). Then, 
$(I\colon\mathfrak{p}_i)=\bigcap_{j\neq i}\mathfrak{p}_j$. Let $f$ be
an indicator function of $[P_i]$, then $f\not\in\mathfrak{p}_i$ and $f\in\mathfrak{p}_j$ 
for $j\neq i$. Thus,
$f\in(I\colon\mathfrak{p}_i)$ and $f\notin I$. Conversely, take a
homogeneous polynomial $f$ in 
 $(I\colon\mathfrak{p}_i)\setminus I$. Then, $f\in \bigcap_{j\neq
i}\mathfrak{p}_j$ and $f\notin\mathfrak{p}_i$. Thus, $f$ is an
indicator function of $[P_i]$.

(b) A homogeneous polynomial $f\in S$ is an indicator function 
of $[P_i]$ if and only if $(I\colon f)=\mathfrak{p}_i$. 
This follows using that the primary decomposition of $I$ is given by 
$I=\bigcap_{j=1}^m\mathfrak{p}_j$ and 
noticing the equality $(I\colon f)=\bigcap_{f\notin
\mathfrak{p}_j}\mathfrak{p}_j$. 
Hence, by the definition of ${\rm v}_{\mathfrak{p}_i}(I)$ given in
Eq.~\eqref{jun17-23-1}, ${\rm v}_{\mathfrak{p}_i}(I)$ is the minimum degree of an
indicator function of $[P_i]$. 
 
(c) The linear independence over $K$ follows using that $h_i(P_j)=0$ if
$i\neq j$ and $h_i(P_i)\neq 0$. 

(d) Since $H_I(r_0)=\dim_K(C_\mathbb{X}(r_0))=m$, maximum possible 
(Eq.~\eqref{sep18-23}), one
has $C_\mathbb{X}(r_0)=K^m$. Hence, for each $1\leq i\leq m$ there is
$w_i\in S_{r_0}$ such that 
${\rm ev}_{r_0}(w_i)=(w_i(P_1),\ldots,w_i(P_m))=e_i$, where $e_i$ is
the $i$-th unit vector in $K^m$. Thus, $w_i$ is
an indicator function of 
$[P_i]$ and, by part (b), we obtain that ${\rm
v}_{\mathfrak{p}_i}(I)\leq r_0$. To show that ${\rm
v}_{\mathfrak{p}_i}(I)=r_0$ for some $i$, we argue by contradiction
assuming that ${\rm v}_{\mathfrak{p}_i}(I)<r_0$ for $i=1,\ldots,m$.
For each $1\leq i\leq m$ there is an indicator function $g_i$ of $[P_i]$ such
that $\deg(g_i)={\rm v}_{\mathfrak{p}_i}(I)$, and there is $1\leq
j_i\leq s$ such that the $j_i$-th entry of $P_i$ is non-zero, that is,
$(P_i)_{j_i}\neq 0$. Consider the homogeneous polynomials
$$
F_i:=t_{j_i}^{r_0-1-\deg(g_i)}g_i,\quad i=1,\ldots,m.
$$
\quad Note that $F_i$ is an 
indicator function of $[P_i]$ of degree $r_0-1$ for $i=1,\ldots,m$.
By the division algorithm \cite[Theorem~3, 
p.~63]{CLO}, we can write $F_i=G_i+h_i$ for some polynomial $G_i\in I$
homogeneous of degree $r_0-1$ and some $h_i\in
K\Delta_\prec(I)_{r_0-1}$, that is, $h_i$ is a standard indicator
function of $[P_i]$ of degree $r_0-1$ for $i=1,\ldots,m$. By part (c),
the set
$\{h_i\}_{i=1}^m$ is linearly independent over $K$. It follows 
readily that the set of cosets $\{h_i+I\}_{i=1}^m$ is a $K$-linearly independent
subset of the $(r_0-1)$-th homogeneous component
$(S/I)_{r_0-1}=S_{r_0-1}/I_{r_0-1}$ of $S/I$. Therefore, one has 
$m=H_I(r_0)>H_I(r_0-1)\geq m$, a contradiction (see Eq.~\eqref{sep18-23}).

(e) The polynomial $h=g(P_i)f-f(P_i)g$ is homogeneous and 
 vanishes at all points of $\mathbb{X}$, that is, $h\in I$. If $h\neq 0$, then
the initial monomial of $h$ is in the initial ideal of $I$, a contradiction
since all monomials that appear in $h$ are standard. Thus, $h=0$ and
$g(P_i)f=f(P_i)g$. 
\end{proof}

\begin{definition}\label{sif} We say $f\in S$ is a 
\textit{standard indicator function} of $[P_i]$ if $f$ is
an indicator function of $[P_i]$ and $f\in
K\Delta_\prec(I)$. 
For each $[P_i]\in\mathbb{X}$, 
the unique (up to multiplication by a scalar from
$K^*$) standard indicator
function $f_i$ of $[P_i]$ of degree ${\rm v}_{\mathfrak{p}_i}(I)$ is
called the $i$-th \textit{standard indicator function} of $[P_i]$ and
$F:=\{f_1,\ldots,f_m\}$ is called \textit{the set of standard indicator
functions} of $\mathbb{X}$. The existence and 
uniqueness of $f_i$ is proved in the following
result.
\end{definition}  

\begin{proposition}\label{indicator-function-prop} Let
$\mathbb{X}=\{[P_1],\ldots,[P_m]\}$ be a subset of
$\mathbb{P}^{s-1}$, let $I=I(\mathbb{X})$ be its vanishing ideal, and let $\prec$ be a graded
monomial order on $S$. The following hold.
\begin{enumerate} 
\item[(a)] For each $[P_i]$ there exists a unique, up to multiplication by a scalar from
$K^*$, standard indicator function $f_i$ of $[P_i]$ of degree ${\rm
v}_{\mathfrak{p}_i}(I)$, where $\mathfrak{p}_i$ is the vanishing ideal
of $[P_i]$.
\item[(b)] $(I\colon\mathfrak{p}_i)/I$ is a principal ideal of $S/I$
generated by $\overline{f_i}=f_i+I$ for $i=1,\ldots,m$.
\end{enumerate}
\end{proposition}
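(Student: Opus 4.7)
The plan is to prove (a) using the division algorithm together with Lemma~\ref{if}(e), and to prove (b) by a ``Cramer-type'' reduction that produces, for any $g\in(I\colon\mathfrak{p}_i)$, an explicit $(S/I)$-multiple of $\overline{f_i}$ representing $\overline{g}$.

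For part (a), I would begin by invoking Lemma~\ref{if}(b), which guarantees an indicator function $g_i$ of $[P_i]$ of degree $v:={\rm v}_{\mathfrak{p}_i}(I)$. Dividing $g_i$ by a Gr\"obner basis of $I$ produces $g_i=G_i+f_i$ with $G_i\in I$ and $f_i\in K\Delta_\prec(I)$; since $\prec$ is a graded order and $g_i$ is homogeneous, the remainder $f_i$ is homogeneous of degree $v$. Because $G_i$ vanishes on $\mathbb{X}$, we have $f_i(P_j)=g_i(P_j)$ for all $j$, so $f_i$ is still an indicator function of $[P_i]$, and by construction is standard of degree $v$. For uniqueness, if $f,g$ are two such functions, both lie in $K\Delta_\prec(I)_v$, so Lemma~\ref{if}(e) yields $g(P_i)f=f(P_i)g$, i.e., $g=(g(P_i)/f(P_i))\,f$, as required.

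For part (b), Lemma~\ref{primdec-ix-a} gives $I=\bigcap_{j=1}^m\mathfrak{p}_j$, hence $(I\colon\mathfrak{p}_i)=\bigcap_{j\neq i}\mathfrak{p}_j$. It suffices to show that for every homogeneous $g\in(I\colon\mathfrak{p}_i)$ the class $\overline{g}$ is an $(S/I)$-multiple of $\overline{f_i}$. If $g\in I$ this is trivial; otherwise $g$ is an indicator function of $[P_i]$ of some degree $d\geq v$. Pick an index $j_0$ with $(P_i)_{j_0}\neq 0$ and set $h:=t_{j_0}^{d-v}f_i$, a homogeneous indicator function of $[P_i]$ of degree $d$ with $h(P_i)=(P_i)_{j_0}^{d-v}f_i(P_i)\neq 0$. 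The polynomial $g(P_i)\,h-h(P_i)\,g$ is homogeneous of degree $d$ and vanishes at every point of $\mathbb{X}$, so it lies in $I$; dividing by the nonzero scalar $h(P_i)$ yields $g\equiv (g(P_i)/h(P_i))\,t_{j_0}^{d-v}f_i \pmod{I}$, which shows $\overline{g}\in (S/I)\cdot\overline{f_i}$.

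There is no serious obstacle: the whole argument rests on the division algorithm respecting homogeneity (since $\prec$ is graded), together with the elementary linear identity that any two indicator functions of $[P_i]$ of the same degree agree up to a scalar modulo $I$. The only mild care needed is to match the degrees of $g$ and $f_i$ before applying this identity, which we do by multiplying $f_i$ by the monomial $t_{j_0}^{d-v}$ for a coordinate $j_0$ at which $P_i$ is nonzero.
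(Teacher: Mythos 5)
Your proof is correct and follows essentially the same route as the paper: part (a) applies the division algorithm to a minimal-degree indicator function and invokes Lemma~\ref{if}(e) for uniqueness, and part (b) matches degrees by multiplying $f_i$ by a suitable power of a coordinate nonzero at $P_i$ and then observes that a scalar combination of the two indicator functions lies in $I$. The only minor omission is that you do not explicitly record the easy inclusion $(S/I)\overline{f_i}\subset(I\colon\mathfrak{p}_i)/I$, which follows from Lemma~\ref{if}(a) since $f_i$ is an indicator function of $[P_i]$ and hence $f_i\in(I\colon\mathfrak{p}_i)$.
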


\begin{proof} (a) By Lemma~\ref{if}(b), there is $g\in S_{d}$, $d={\rm
v}_{\mathfrak{p}_i}(I)$, such that
$g$ is an indicator function of $[P_i]$. As $I$ is
a graded ideal and $g$ is homogeneous of degree $d$, by
the division algorithm \cite[Theorem~3, 
p.~63]{CLO}, we can write $g=h+r_g$ for some $h\in I$ homogeneous of
degree $d$ and some $r_g\in
K\Delta_\prec(I)$ with $\deg(r_g)=d$. Note that
$r_g(P_i)\neq 0$ and $r_g(P_j)=0$ for $j\neq i$. 
Thus, $f_i:=r_g$ is a standard indicator function of $[P_i]$ of degree
${\rm v}_{\mathfrak{p}_i}(I)$. This proves the existence of $f_i$.  The
uniqueness of $f_i$ follows at once from Lemma~\ref{if}(e).

(b) By Lemma~\ref{if}(a), $f_i\in(I\colon\mathfrak{p}_i)\setminus I$.
Thus, $\overline{f_i}\in (I\colon\mathfrak{p}_i)/I$ and
$(S/I)\overline{f_i}\subset (I\colon\mathfrak{p}_i)/I$. To show the
other inclusion take $\overline{0}\neq
\overline{g}\in(I\colon\mathfrak{p}_i)/I$. As
$(I\colon\mathfrak{p}_i)$ is graded, we may assume that $g$ is
homogeneous of degree $d_1$ and $g\in(I\colon\mathfrak{p}_i)\setminus
I$. Then, by Lemma~\ref{if}(a), $g$ is an indicator function of 
$[P_i]$ and, by part (a) and Lemma~\ref{if}(b), 
$d:={\rm v}_{\mathfrak{p}_i}(I)=\deg(f_i)\leq d_1$. As $P_i\neq 0$, we
can pick $t_j$ such that $t_j(P_i)\neq 0$. Then, the homogeneous polynomial
$$ 
h=\frac{t_j^{d_1-d}f_i}{(t_j^{d_1-d}f_i)(P_i)}-\frac{g}{g(P_i)}
$$
vanishes at all points of $\mathbb{X}$ and is either $0$ or has 
degree $d_1$. Thus, $h\in I$ and 
$\overline{g}\in(S/I)\overline{f_i}$.
\end{proof}

\begin{proposition}\cite[Proposition~4.6]{min-dis-generalized}\label{reg-min-dis} 
 ${\rm v}(I)={\rm reg}(\delta_\mathbb{X})$.
\end{proposition}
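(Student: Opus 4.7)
The plan is to translate both invariants into the language of indicator functions, then show they coincide by combining Lemma~\ref{if}(b) with an easy degree-raising argument.

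First, I would reinterpret the minimum distance $\delta_\mathbb{X}(d)$. Using the formula $\delta_\mathbb{X}(d)=\min\{\deg(S/(I\colon f))\mid f\in S_d\setminus I\}$ together with the primary decomposition $(I\colon f)=\bigcap_{f\notin\mathfrak{p}_j}\mathfrak{p}_j$ and the fact that each $\deg(S/\mathfrak{p}_j)=1$ (Lemma~\ref{primdec-ix-a}), one sees that $\deg(S/(I\colon f))$ equals $|\{j\colon f(P_j)\neq 0\}|$. Consequently, $\delta_\mathbb{X}(d)=1$ if and only if there is a homogeneous polynomial of degree $d$ that vanishes at all but exactly one point of $\mathbb{X}$, i.e.\ an indicator function of some $[P_i]$ in degree $d$.

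The next step is to compute the smallest $d$ for which such an indicator function exists. By Lemma~\ref{if}(b), the least degree of an indicator function of $[P_i]$ equals ${\rm v}_{\mathfrak{p}_i}(I)$, and therefore the least $d$ with $\delta_\mathbb{X}(d)=1$ is $\min_i{\rm v}_{\mathfrak{p}_i}(I)={\rm v}(I)$.

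It remains to confirm that once $\delta_\mathbb{X}(d)=1$ holds, it persists for all larger degrees, so that this minimum equals the regularity index ${\rm reg}(\delta_\mathbb{X})$. For this, if $f$ is an indicator function of $[P_i]$ of degree $d$, I would pick a variable $t_j$ with $t_j(P_i)\neq 0$ (which exists because $P_i\neq 0$) and observe that $t_jf$ is an indicator function of $[P_i]$ of degree $d+1$. Hence the set of degrees admitting indicator functions of a given $[P_i]$ is closed under adding $1$, so $\delta_\mathbb{X}(d)=1$ for every $d\geq{\rm v}(I)$, while $\delta_\mathbb{X}(d)>1$ for $d<{\rm v}(I)$ by the previous step. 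This gives ${\rm reg}(\delta_\mathbb{X})={\rm v}(I)$.

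The only mildly subtle point is ensuring that the codimension-one-support interpretation of $\delta_\mathbb{X}(d)=1$ really matches the definition via the v-number through colon ideals, i.e.\ that $(I\colon f)$ is a single associated prime exactly when $f$ is an indicator function; this is handled by the primary-decomposition identity above, which is also what makes Lemma~\ref{if}(b) apply. The degree-raising argument is straightforward, so no serious obstacle arises.
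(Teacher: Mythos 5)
The paper does not prove this proposition; it cites \cite[Proposition~4.6]{min-dis-generalized} as a known result, so there is no internal proof to compare against. That said, your argument is correct and self-contained given the surrounding lemmas. The identification $\deg(S/(I\colon f))=|\{j\colon f(P_j)\neq 0\}|$ follows exactly as you describe from Lemma~\ref{primdec-ix-a} and the primary decomposition $(I\colon f)=\bigcap_{f\notin\mathfrak{p}_j}\mathfrak{p}_j$, so $\delta_\mathbb{X}(d)=1$ if and only if some $[P_i]$ admits an indicator function in degree $d$; Lemma~\ref{if}(b) converts the least such $d$ into $\min_i{\rm v}_{\mathfrak{p}_i}(I)={\rm v}(I)$; and the degree-raising step (multiplying an indicator function by a variable not vanishing at $P_i$) shows the set of degrees admitting indicator functions is upward closed, so $\delta_\mathbb{X}(d)=1$ persists for all $d\geq{\rm v}(I)$. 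Since ${\rm reg}(\delta_\mathbb{X})$ is by definition the least $d$ with $\delta_\mathbb{X}(d)=1$ (cf.\ Eq.~\eqref{sep18-23} and the displayed chain just before the proposition), this gives the equality. Your degree-raising argument also sidesteps needing the cited strict-decrease result for $\delta_\mathbb{X}$, which is a small bonus in self-containment; it is the same device the paper itself uses, e.g.\ in the proofs of Lemma~\ref{if}(d) and Theorem~\ref{sarabia-vila-2}(b), so the approach fits naturally with the paper's methods.
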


\begin{proposition}\label{sarabia-vila} 
If ${\rm v}_{\mathfrak{p}_i}(I)=r_0$ for $i=1,\ldots,m$
and $|\mathbb{X}|=1+H_I(r_0-1)$, then $\delta_\mathbb{X}(r_0-1)=2$.
\end{proposition}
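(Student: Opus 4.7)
The plan is to obtain $\delta_\mathbb{X}(r_0-1)=2$ by squeezing the minimum distance between an upper bound coming from the Singleton inequality and a lower bound coming directly from the hypothesis on the local v-numbers.

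First, I would record the length and dimension of the code $C_\mathbb{X}(r_0-1)$. The length is $|\mathbb{X}|=m$ and, using item (ii) of Subsection~\ref{RM}, the dimension equals $H_I(r_0-1)=m-1$ by hypothesis. The Singleton bound \cite[p.~71]{Huffman-Pless} then gives
\begin{equation*}
\delta_\mathbb{X}(r_0-1)\leq m-(m-1)+1=2,
\end{equation*}
so it remains only to prove that no codeword of weight $1$ exists.

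For the lower bound $\delta_\mathbb{X}(r_0-1)\geq 2$, I would argue by contradiction. A codeword of weight $1$ would correspond to a homogeneous polynomial $f\in S_{r_0-1}\setminus I$ whose evaluation vector ${\rm ev}_{r_0-1}(f)=(f(P_1),\ldots,f(P_m))$ has exactly one non-zero entry, say the $i$-th. Then $f(P_i)\neq 0$ and $f(P_j)=0$ for all $j\neq i$, so $f$ is by Definition~\ref{indicator-f} an indicator function of $[P_i]$ of degree $r_0-1$. By Lemma~\ref{if}(b), this forces ${\rm v}_{\mathfrak{p}_i}(I)\leq r_0-1$, contradicting the hypothesis ${\rm v}_{\mathfrak{p}_i}(I)=r_0$.

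Combining the two inequalities gives the desired equality $\delta_\mathbb{X}(r_0-1)=2$. I do not anticipate a real obstacle here: the argument is essentially the interaction between the two ingredients whose necessary background is already set up in the excerpt, namely the interpretation of ${\rm v}_{\mathfrak{p}_i}(I)$ as the minimum degree of an indicator function of $[P_i]$ (Lemma~\ref{if}(b)) and the standard formula $\dim_K C_\mathbb{X}(d)=H_I(d)$ together with Singleton's bound.
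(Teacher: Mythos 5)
Your proof is correct. For the upper bound you use the Singleton bound exactly as the paper does, so the two arguments agree there. Where you diverge is the lower bound: the paper invokes Proposition~\ref{reg-min-dis}, which asserts ${\rm v}(I)={\rm reg}(\delta_\mathbb{X})$ (a fact cited from an earlier article), concludes from the hypothesis that ${\rm reg}(\delta_\mathbb{X})=r_0$, and hence that $\delta_\mathbb{X}(r_0-1)\geq 2$. You instead argue directly: a nonzero codeword of Hamming weight $1$ in $C_\mathbb{X}(r_0-1)$ would lift to a homogeneous $f\in S_{r_0-1}\setminus I$ vanishing at all but one point, which is an indicator function of degree $r_0-1$, and Lemma~\ref{if}(b) would force ${\rm v}_{\mathfrak{p}_i}(I)\leq r_0-1$, contradicting the hypothesis. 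Your route is more elementary and self-contained, bypassing the imported Proposition~\ref{reg-min-dis} in favor of first principles about indicator functions; the paper's route is shorter because it leans on a previously established black box. Both are equally rigorous, and your version arguably makes clearer exactly which hypothesis (the local condition ${\rm v}_{\mathfrak{p}_i}(I)=r_0$ for \emph{every} $i$, not merely the minimum) is doing the work in the lower bound.
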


\begin{proof} By Proposition~\ref{reg-min-dis}, one has ${\rm
v}(I)={\rm reg}(\delta_\mathbb{X})=r_0$. Thus, $\delta_\mathbb{X}(r_0)=1$
and $\delta_\mathbb{X}(r_0-1)\geq 2$. Using the Singleton bound for
the minimum distance \cite[p.~71]{Huffman-Pless}, we get
\begin{align*}
\delta_\mathbb{X}(r_0-1)&\leq|\mathbb{X}|-\dim(C_\mathbb{X}(r_0-1))+1
=|\mathbb{X}|-H_I(r_0-1)+1\\
&=|\mathbb{X}|-(|\mathbb{X}|-1)+1=2,
\end{align*}
and consequently the equality $\delta_\mathbb{X}(r_0-1)=2$ holds.
\end{proof}

\begin{lemma}\label{sarabia-vila-1} 
If $C_\mathbb{X}(0)^\perp=(b_1,\ldots,b_m)\cdot C_\mathbb{X}(r_0-1)$
and $0\neq b_i\in K$ for all $i$, then 
${\rm v}_{\mathfrak{p}_i}(I)=r_0$ for $i=1,\ldots,m$.
\end{lemma}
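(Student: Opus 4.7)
The plan is to translate the codimensional duality hypothesis into two pieces of information: a dimension identity and a linear identity satisfied by every degree $r_0-1$ polynomial when weighted by the $b_i$'s, and then to contradict this identity by exhibiting a scaled indicator function.

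First I would identify the two relevant codes explicitly. Since $S_0 = K$ and ${\rm ev}_0(\lambda) = (\lambda, \ldots, \lambda)$, the code $C_\mathbb{X}(0)$ is the $1$-dimensional space $K\cdot(1,\ldots,1)$, and consequently
\begin{equation*}
C_\mathbb{X}(0)^\perp = \{(c_1,\ldots,c_m)\in K^m \mid c_1+\cdots+c_m = 0\}
\end{equation*}
has dimension $m-1$. The hypothesis $C_\mathbb{X}(0)^\perp = (b_1,\ldots,b_m)\cdot C_\mathbb{X}(r_0-1)$ with all $b_i\in K^*$ therefore forces $\dim_K C_\mathbb{X}(r_0-1) = m-1$, and says that for every $f \in S_{r_0-1}$ the vector $(b_1 f(P_1),\ldots, b_m f(P_m))$ lies in $C_\mathbb{X}(0)^\perp$, i.e.
\begin{equation*}
\sum_{i=1}^m b_i\, f(P_i) = 0 \qquad \text{for all } f \in S_{r_0-1}.
\end{equation*}

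Next, Lemma~\ref{if}(d) already gives ${\rm v}_{\mathfrak{p}_i}(I)\leq r_0$ for all $i$, so it suffices to rule out the strict inequality. I would argue by contradiction: suppose ${\rm v}_{\mathfrak{p}_j}(I) = d \leq r_0 - 1$ for some $j$. Using Lemma~\ref{if}(b), choose an indicator function $g$ of $[P_j]$ of degree $d$. Because $P_j \neq 0$, pick a variable $t_k$ with $t_k(P_j) \neq 0$ and set $F := t_k^{r_0-1-d} g \in S_{r_0-1}$. Then $F$ is again an indicator function of $[P_j]$ of degree exactly $r_0-1$: we have $F(P_j) = t_k(P_j)^{r_0-1-d} g(P_j) \neq 0$ and $F(P_i) = 0$ for $i \neq j$.

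Applying the displayed identity to $F$ gives
\begin{equation*}
0 = \sum_{i=1}^m b_i\, F(P_i) = b_j\, F(P_j),
\end{equation*}
which is nonzero because $b_j \in K^*$ and $F(P_j) \neq 0$. This contradiction forces ${\rm v}_{\mathfrak{p}_i}(I) = r_0$ for every $i$. The only subtlety is checking that one can bump the degree of an indicator function up to $r_0-1$ while remaining an indicator function for the same point, which is exactly the role of multiplying by a power of $t_k$ that does not vanish at $P_j$.
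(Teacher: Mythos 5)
Your proof is correct and follows essentially the same route as the paper's: use Lemma~\ref{if}(d) for the upper bound, multiply a minimal-degree indicator function by a suitable power of $t_k$ to lift it to degree $r_0-1$, and obtain a contradiction from orthogonality of $b\cdot{\rm ev}_{r_0-1}(f)$ to the all-ones vector. The only cosmetic difference is that you first extract the identity $\sum_i b_i f(P_i)=0$ for all $f\in S_{r_0-1}$ and then specialize, whereas the paper directly evaluates the inner product on the indicator function.
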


\begin{proof} 
By Lemma~\ref{if}(d), one has
${\rm v}_{\mathfrak{p}_i}(I)\leq r_0$ for $i=1,\ldots,m$. Recall that  
${\rm v}_{\mathfrak{p}_i}(I)$ is the least degree of an indicator
function of $[P_i]$ (Lemma~\ref{if}(b)). Let $f$ be any 
indicator function of $[P_i]$. It suffices to show that $\deg(f)\geq
r_0$. We proceed by contradiction assuming that $\deg(f)<r_0$. Since
$P_i\neq 0$, there is $t_k$ such that $t_k(P_i)\neq 0$. Then, setting
$g=t_k^\ell f$, $\ell=r_0-1-\deg(f)$, one has that $g$ is an indicator
function of $[P_i]$ of degree $r_0-1$. For simplicity assume $i=1$.
Note that $C_\mathbb{X}(0)=K(1,\ldots,1)$.  
The point $Q=(g(P_1),0\ldots,0)$ is in $C_\mathbb{X}(r_0-1)$
because ${\rm ev}_{r_0-1}(g)=Q$. Hence, letting $b=(b_1,\ldots,b_m)$,
$b_i\neq 0$, 
we get $b\cdot Q\in C_\mathbb{X}(0)^\perp$. Thus
$$
0=\langle b\cdot Q,(1,\ldots,1)\rangle=\langle
(b_1g(P_1),0,\ldots,0),(1,\ldots,1)\rangle=b_1g(P_1),
$$ 
and consequently $b_1=0$, a contradiction. 
\end{proof}

We come to one of our main results. 
\begin{theorem}{\rm(Projective duality
criterion)}\label{duality-criterion} Let
$\mathbb{X}=\{[P_1],\ldots,[P_m]\}$ be a subset of
$\mathbb{P}^{s-1}$, let $I$ be its vanishing ideal, and
let $r_0$ be the regularity of $S/I$. The following are equivalent. 
\begin{enumerate} 
\item[(a)] $C_\mathbb{X}(r_0-d-1)$ is monomially equivalent to $C_\mathbb{X}(d)^\perp$  for 
all $0\leq d\leq r_0$.  
\item[(b)] $H_I(d)+H_I(r_0-d-1)=|\mathbb{X}|$ for all $0\leq d\leq r_0$ 
and $r_0={\rm v}_{\mathfrak{p}}(I)$ for all $\mathfrak{p}\in{\rm Ass}(I)$.
\item[(c)] There is a vector $\beta=(\beta_1,\ldots,\beta_m)\in K^m$ such
that $\beta_i\neq 0$ for all $i$ and
\begin{equation*}
C_\mathbb{X}(d)^\perp=(\beta_1,\ldots,\beta_m)\cdot
C_\mathbb{X}(r_0-d-1)\ \text{ for all \ $0\leq d\leq r_0$}.
\end{equation*}
Moreover, $\beta$ is any vector that defines a parity  
check matrix of $C_\mathbb{X}(r_0-1)$. 
\end{enumerate}
\end{theorem}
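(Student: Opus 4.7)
The plan is to prove the cycle of implications (c)$\Rightarrow$(a)$\Rightarrow$(b)$\Rightarrow$(c). The implication (c)$\Rightarrow$(a) is immediate from Definition~\ref{me}, since condition (c) exhibits a monomial equivalence in every degree $0\leq d\leq r_0$ with the common vector $\beta$.

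For (a)$\Rightarrow$(b), I would use that monomial equivalence preserves dimensions. From $\dim_K C_\mathbb{X}(d)=H_I(d)$ and $\dim_K C_\mathbb{X}(d)^\perp=|\mathbb{X}|-H_I(d)$, the hypothesis forces $H_I(r_0-d-1)=|\mathbb{X}|-H_I(d)$, giving the first equality in (b). Setting $d=0$ in (a) yields a monomial equivalence between $C_\mathbb{X}(0)^\perp$ and $C_\mathbb{X}(r_0-1)$, and Lemma~\ref{sarabia-vila-1} then forces ${\rm v}_{\mathfrak{p}_i}(I)=r_0$ for $i=1,\ldots,m$; since the associated primes of $I$ are exactly the $\mathfrak{p}_i$ by Lemma~\ref{primdec-ix-a}, the remaining condition in (b) follows.

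The main work is (b)$\Rightarrow$(c). Plugging $d=0$ into the Hilbert function equality gives $H_I(r_0-1)=|\mathbb{X}|-1$, so $C_\mathbb{X}(r_0-1)$ has codimension one in $K^m$ and $C_\mathbb{X}(r_0-1)^\perp=K\beta$ for some nonzero $\beta\in K^m$. I would first show $\beta_i\neq 0$ for every $i$: by reflexivity of the dual we have $C_\mathbb{X}(r_0-1)=\{c\in K^m:\langle\beta,c\rangle=0\}$, hence $\beta_i=0$ is equivalent to $e_i\in C_\mathbb{X}(r_0-1)$, i.e.\ to the existence of $f\in S_{r_0-1}$ with $f(P_i)\neq 0$ and $f(P_j)=0$ for $j\neq i$. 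But such an $f$ would be an indicator function of $[P_i]$ of degree $r_0-1$, contradicting ${\rm v}_{\mathfrak{p}_i}(I)=r_0$ by Lemma~\ref{if}(b).

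With $\beta$ having all entries nonzero, the equality $C_\mathbb{X}(d)^\perp=\beta\cdot C_\mathbb{X}(r_0-d-1)$ in each degree then follows from the multiplicative pairing $S_d\times S_{r_0-d-1}\to S_{r_0-1}$, $(f,g)\mapsto fg$. Explicitly, for $f\in S_d$ and $g\in S_{r_0-d-1}$,
$$
\langle{\rm ev}_d(f),\beta\cdot{\rm ev}_{r_0-d-1}(g)\rangle=\sum_{j=1}^m\beta_j(fg)(P_j)=\langle\beta,{\rm ev}_{r_0-1}(fg)\rangle=0,
$$
yielding $\beta\cdot C_\mathbb{X}(r_0-d-1)\subseteq C_\mathbb{X}(d)^\perp$. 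Since every $\beta_i$ is nonzero, multiplication by $\beta$ is a bijection on $K^m$, whence
$$
\dim_K\bigl(\beta\cdot C_\mathbb{X}(r_0-d-1)\bigr)=H_I(r_0-d-1)=|\mathbb{X}|-H_I(d)=\dim_K C_\mathbb{X}(d)^\perp,
$$
forcing equality (the boundary case $d=r_0$ is trivial since both sides vanish). The closing assertion of (c), that $\beta$ may be taken to be any parity check vector of $C_\mathbb{X}(r_0-1)$, is built into the construction together with the scalar uniqueness supplied by Lemma~\ref{sep5-23}. The principal obstacle in the whole argument is the nonvanishing of the coordinates of $\beta$, and this is exactly what the hypothesis ${\rm v}_{\mathfrak{p}_i}(I)=r_0$ for all $i$ is tailored to guarantee.
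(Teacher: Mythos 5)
Your proposal is correct and follows the same overall architecture as the paper: the cycle (c)$\Rightarrow$(a)$\Rightarrow$(b)$\Rightarrow$(c), with (a)$\Rightarrow$(b) resting on dimension counts and Lemma~\ref{sarabia-vila-1}, and (b)$\Rightarrow$(c) resting on the multiplicative pairing $S_d\times S_{r_0-d-1}\to S_{r_0-1}$ together with a dimension count. The one place you deviate is in proving that the parity check vector $\beta$ has no zero entries. The paper first invokes Proposition~\ref{sarabia-vila} (which in turn depends on the identity ${\rm v}(I)={\rm reg}(\delta_\mathbb{X})$ of Proposition~\ref{reg-min-dis} and the Singleton bound) to establish $\delta_\mathbb{X}(r_0-1)=2$, and then derives a contradiction from $e_i\in C_\mathbb{X}(r_0-1)$ because this would force $\delta_\mathbb{X}(r_0-1)=1$. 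You instead go directly: $e_i\in C_\mathbb{X}(r_0-1)$ would give an indicator function of $[P_i]$ of degree $r_0-1$, which by Lemma~\ref{if}(b) would force ${\rm v}_{\mathfrak{p}_i}(I)\leq r_0-1$, contradicting hypothesis (b) outright. Your route is shorter and more elementary, since it bypasses the minimum-distance interpretation entirely and uses only the definitional connection between v-numbers and indicator functions; the paper's route has the side benefit of recording the coding-theoretic fact $\delta_\mathbb{X}(r_0-1)=2$ along the way, but it is not needed for the theorem.
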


\begin{proof} (a)$\Rightarrow$(b) Since
$\dim_K(C_\mathbb{X}(r_0-d-1))=H_I(r_0-d-1)$ and
$\dim_K(C_\mathbb{X}(d)^\perp)=|\mathbb{X}|-H_I(d)$, we obtain that 
$H_I(r_0-d-1)+H_I(d)=|\mathbb{X}|$ for all $0\leq d\leq r_0$ because
equivalent codes have the same dimension. As 
$C_\mathbb{X}(r_0-1)$ is equivalent to $C_\mathbb{X}(0)^\perp$, there is
$b\in(K^*)^m$ such that $C_\mathbb{X}(0)^\perp=b\cdot C_\mathbb{X}(r_0-1)$. Then, by
Lemma~\ref{sarabia-vila-1}, ${\rm v}_{\mathfrak{p}}(I)=r_0$ for
all  $\mathfrak{p}\in{\rm Ass}(I)$.

(b)$\Rightarrow$(c) By Proposition~\ref{sarabia-vila}, one has
$\delta_\mathbb{X}(r_0-1)=2$. If $d=0$, by assumption, we get
$H_I(r_0-1)=|\mathbb{X}|-1=m-1$. Pick any parity check matrix $H$ of 
$C_\mathbb{X}(r_0-1)$ \cite[p.~4]{Huffman-Pless}. Since
$\dim_K(C_\mathbb{X}(r_0-1))=m-1$, $H=(\beta_1,\ldots, \beta_m)$ is a
$1\times m$ matrix 
with $\beta_i\in K$ for all $i$ and 
\begin{equation}\label{jun8-23}
C_\mathbb{X}(r_0-1)=\{x\in K^m\mid Hx=0\}.
\end{equation}
\quad We set $\beta=(\beta_1,\ldots,\beta_m)$. If $\beta_i=0$ for some
$1\leq i\leq m$, then the $i$-th unit vector $e_i$ satisfies $He_i=0$
and, by Eq.~\eqref{jun8-23}, we obtain $e_i\in C_\mathbb{X}(r_0-1)$. 
Hence, $\delta_\mathbb{X}(r_0-1)=1$, a
contradiction. Thus, $\beta_i\neq 0$ for $i=1,\ldots,m$. From 
Eq.~\eqref{jun8-23}, we get 
$\beta\in C_\mathbb{X}(r_0-1)^\perp$ and, since
$\dim_K(C_\mathbb{X}(r_0-1)^\perp)=1$, we obtain  
\begin{equation}\label{jun8-23-1}
C_\mathbb{X}(r_0-1)^\perp=K(\beta_1,\ldots,\beta_m)=K\beta.
\end{equation}
\quad From the equalities
\begin{align*}
\dim_K(C_\mathbb{X}(d)^\perp)&=|\mathbb{X}|-H_I(d)=H_I(r_0-d-1)\\
&=\dim_K(C_\mathbb{X}(r_0-d-1))
=\dim_K(\beta\cdot C_\mathbb{X}(r_0-d-1)),
\end{align*}
we get $\dim_K(C_\mathbb{X}(d)^\perp)=\dim_K(\beta\cdot
C_\mathbb{X}(r_0-d-1))$. Thus, we need only show 
the inclusion 
$$
C_\mathbb{X}(d)^\perp\supset\beta\cdot C_\mathbb{X}(r_0-d-1).
$$
\quad Take $\gamma_1\in\beta\cdot C_\mathbb{X}(r_0-d-1)$. Then,
$\gamma_1=(\beta_1g(P_1),\ldots,\beta_mg(P_m))$, where $g$ 
is a homogeneous polynomial of degree $r_0-d-1$. Take any
$\gamma_2\in C_\mathbb{X}(d)$. Then, $\gamma_2=(f(P_1),\ldots,f(P_m))$,
where $f$ is a homogeneous polynomial of degree $d$. Then
$$
\langle\gamma_1,\gamma_2\rangle=\sum_{i=1}^m\beta_i(gf)(P_i).
$$
\quad By Eq.~\eqref{jun8-23-1}, $C_\mathbb{X}(r_0-1)^\perp=K\beta$, 
if $c:=((gf)(P_1),\ldots,(gf)(P_m))\in C_\mathbb{X}(r_0-1)$, then 
$\langle\beta,c\rangle=0$, and so, from above,
$\langle\gamma_1,\gamma_2\rangle=0$. Hence, $\gamma_1\in
C_\mathbb{X}(d)^\perp$, and the proof is complete. 

(c)$\Rightarrow$(a) This follows at once from the definition of
monomially equivalent
codes.
\end{proof}

\begin{lemma}\label{sep5-23}
If $C_\mathbb{X}(r_0-1)$ is monomially equivalent to
$C_\mathbb{X}(0)^\perp$, then there is a unique, up to 
multiplication by a scalar from $K^*$, vector $\beta$ in $(K^*)^m$
such that 
$$
C_\mathbb{X}(0)^\perp=\beta\cdot C_\mathbb{X}(r_0-1).
$$
\end{lemma}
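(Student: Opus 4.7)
The plan is to reduce the statement to the well-known fact that a linear functional vanishing on a hyperplane is determined up to scalar by that hyperplane. The key observation is that $C_\mathbb{X}(0)$ is very small and explicit: since $S_0=K$ and $\mathrm{ev}_0(c)=(c,\ldots,c)$ for $c\in K$, one has $C_\mathbb{X}(0)=K(1,\ldots,1)$, and therefore $C_\mathbb{X}(0)^\perp$ is the hyperplane $H=\{x\in K^m\mid \sum_{i=1}^m x_i=0\}$ of dimension $m-1$.

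Existence of $\beta$ is given by the hypothesis of monomial equivalence, so the real content is uniqueness. First I would set up the argument: suppose $\beta,\beta'\in(K^*)^m$ both satisfy $\beta\cdot C_\mathbb{X}(r_0-1)=\beta'\cdot C_\mathbb{X}(r_0-1)=H$. Form the componentwise quotient $\gamma=(\beta_1'/\beta_1,\ldots,\beta_m'/\beta_m)\in(K^*)^m$, so that $\beta'=\gamma\cdot\beta$. The aim is to show $\gamma$ is a nonzero scalar multiple of $(1,\ldots,1)$.

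Next I would translate the equality $\beta\cdot C_\mathbb{X}(r_0-1)=\beta'\cdot C_\mathbb{X}(r_0-1)$ into a statement about $\gamma$ acting on $H$. Using associativity/commutativity of the Hadamard product, $\gamma\cdot H=\gamma\cdot(\beta\cdot C_\mathbb{X}(r_0-1))=\beta'\cdot C_\mathbb{X}(r_0-1)=H$. In particular, for every $v\in H$, the vector $\gamma\cdot v$ lies in $H$, which means $\sum_{i=1}^m\gamma_iv_i=0$ whenever $\sum_{i=1}^m v_i=0$. In other words, the linear functional $v\mapsto\langle\gamma,v\rangle$ vanishes on the hyperplane $H$.

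Finally, since $H$ is the kernel of the functional $v\mapsto\langle(1,\ldots,1),v\rangle$ and has codimension $1$, the dual space $(K^m)^*/H^\perp$ is one-dimensional, so $\gamma$ must be a $K$-multiple of $(1,\ldots,1)$; equivalently $\gamma\in C_\mathbb{X}(0)^{\perp\perp}=C_\mathbb{X}(0)=K(1,\ldots,1)$. Write $\gamma=\lambda(1,\ldots,1)$ with $\lambda\in K$. Since all entries $\gamma_i$ are nonzero, $\lambda\in K^*$, whence $\beta'=\lambda\beta$. This is exactly uniqueness up to multiplication by a scalar in $K^*$. There is no serious obstacle: the only subtlety is recognizing that the monomial-equivalence condition on $C_\mathbb{X}(r_0-1)$ translates into the invariance of $H$ under Hadamard multiplication by $\gamma$, after which the hyperplane argument closes the proof.
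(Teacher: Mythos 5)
Your proof is correct. The organization differs mildly from the paper's: the paper shows directly that both $\beta$ and $\beta'$ belong to $C_\mathbb{X}(r_0-1)^\perp$, using the identity $\langle\beta\cdot\gamma,(1,\ldots,1)\rangle=\langle\beta,\gamma\rangle$ for $\gamma\in C_\mathbb{X}(r_0-1)$ together with the fact that $\dim_K C_\mathbb{X}(r_0-1)^\perp=1$; you instead pass to the componentwise quotient $\gamma=\beta'/\beta$, observe that it maps the hyperplane $H=C_\mathbb{X}(0)^\perp$ into itself, and conclude $\gamma\in H^\perp=C_\mathbb{X}(0)^{\perp\perp}=K(1,\ldots,1)$. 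Both arguments rest on exactly the same two ingredients --- the adjunction between Hadamard multiplication and the standard inner product, plus a one-dimensionality count --- but you apply the dimension count to $H^\perp$ rather than to $C_\mathbb{X}(r_0-1)^\perp$. Your route has the small advantage that the hyperplane $H=\ker(\sum x_i)$ and its perp $K(1,\ldots,1)$ are explicit and do not depend on knowing anything about the code $C_\mathbb{X}(r_0-1)$ beyond the single hypothesis of monomial equivalence, making the reduction conceptually transparent; the paper's version is a bit shorter since it avoids introducing the quotient vector. One cosmetic remark: the phrase ``the dual space $(K^m)^*/H^\perp$ is one-dimensional'' is a slightly off-target way to express what you want; the clean statement, which you also give, is simply $H^\perp = C_\mathbb{X}(0)$, so the functional $v\mapsto\langle\gamma,v\rangle$ vanishing on $H$ forces $\gamma\in K(1,\ldots,1)$.
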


\begin{proof} Let $\beta$, $\beta'$ be two vectors in $(K^*)^m$ such
that 
\begin{equation}\label{sep5-23-1}
K(1,\ldots,1)^\perp=C_\mathbb{X}(0)^\perp=\beta\cdot
C_\mathbb{X}(r_0-1)=\beta'\cdot C_\mathbb{X}(r_0-1).
\end{equation}
\quad Note that $\dim_K(C_\mathbb{X}(r_0-1)^\perp)=1$ because
$\dim_K(C_\mathbb{X}(0)^\perp)=m-1$. Thus, it suffices to show that
$\beta$ and $\beta'$ are in $C_\mathbb{X}(r_0-1)^\perp$. From
Eq.~\eqref{sep5-23-1}, one has
$$
\langle \beta\cdot\gamma,(1,\ldots,1)\rangle=\langle
\beta'\cdot\gamma,(1,\ldots,1)\rangle=0\ \ \forall \gamma\in
C_\mathbb{X}(r_0-1),
$$
that is, $\langle\beta,\gamma\rangle=\langle\beta',\gamma\rangle=0$ for 
all $\gamma\in C_\mathbb{X}(r_0-1)$. Then, $\beta$ and $\beta'$ are in
$C_\mathbb{X}(r_0-1)^\perp$.
\end{proof}

\begin{proposition}\label{jul6-23} Let $I=I(\mathbb{X})\subset S$ be the vanishing
ideal of $\mathbb{X}=\{[P_1],\ldots,[P_m]\}$ and let $\mathfrak{p}_i$
be the vanishing ideal of $[P_i]$. The following hold.
\begin{enumerate}
\item[(a)] If $S/I$ is a level ring and $f_i$ is the $i$-th standard
indicator function of $\mathbb{X}$, then 
$$\deg(f_i)={\rm v}_{\mathfrak{p}_i}(I)={\rm reg}(S/I).$$
\item[(b)] If $I$ is Gorenstein, then
$H_I(d)+H_I(r_0-d-1)=|\mathbb{X}|$ for all $0\leq d\leq r_0$.  
\end{enumerate}
\end{proposition}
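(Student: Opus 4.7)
Part (b) is immediate from the machinery already in place. Stanley's theorem (quoted just after Definition~\ref{h-vector-def}) asserts that every Gorenstein graded $K$-algebra has symmetric h-vector, and by Proposition~\ref{jul5-23} that symmetry is equivalent to $H_I(d) + H_I(r_0 - d - 1) = |\mathbb{X}|$ for all $0 \leq d \leq r_0$. So (b) requires no further work.

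For part (a), my plan is to show that each $\overline{f_i}$ descends to a nonzero socle element in an Artinian reduction of $S/I$, and then exploit the fact that a level ring has its socle concentrated in a single degree, which must be $r_0$. After passing to the algebraic closure if necessary---this preserves being level, the regularity, and each ${\rm v}_{\mathfrak{p}_i}(I)$ via Remark~\ref{gor-iff-ext} and Lemma~\ref{aug1-23}---choose a linear form $h$ regular on $S/I$. By Remark~\ref{mar17-24}, $h(P_j) \neq 0$ for every $[P_j] \in \mathbb{X}$. Set $J = (I, h)$, so that $S/J$ is an Artinian graded $K$-algebra whose top nonzero degree is $r_0$ (as in the proof of Lemma~\ref{h-vector-H}); because $S/I$ is level, ${\rm Soc}(S/J)$ is concentrated in degree $r_0$.

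The crux is the claim that $\overline{f_i} \in S/J$ is a nonzero socle element. For nonzeroness, suppose $f_i = g + hG$ with $g \in I$ and $G$ homogeneous of degree $\deg(f_i) - 1$. Then $hG \equiv f_i \pmod{I}$, so $hG \in (I : \mathfrak{p}_i)$; regularity of $h$ on $S/I$ yields $G \in (I : \mathfrak{p}_i)$, and $G \notin I$ (else $f_i \in I$), so Lemma~\ref{if}(a,b) forces $G$ to be an indicator function of $[P_i]$ of degree strictly less than ${\rm v}_{\mathfrak{p}_i}(I)$, a contradiction. For socle membership, fix $t_k$ and set $c_k = (P_i)_k / h(P_i) \in K$. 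The polynomial $t_k f_i - c_k h f_i$ is homogeneous of degree $\deg(f_i) + 1$; it vanishes at $[P_j]$ for $j \neq i$ because $f_i(P_j) = 0$, and at $[P_i]$ both terms equal $(P_i)_k f_i(P_i)$. Hence $t_k f_i - c_k h f_i \in I$, so $t_k f_i \in I + hS = J$, and $t_k \overline{f_i} = 0$ in $S/J$ for every $k$.

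Combining these steps, $\overline{f_i}$ is a nonzero socle element of $S/J$ of degree $\deg(f_i)$; the level hypothesis pins this degree to $r_0={\rm reg}(S/I)$, and since $\deg(f_i) = {\rm v}_{\mathfrak{p}_i}(I)$ by the definition of the standard indicator function, part (a) follows. The technically delicate step is the socle-membership calculation: the key observation is that the scalar $c_k = (P_i)_k/h(P_i)$ is precisely what allows $t_k f_i$ to be rewritten modulo $I$ as a multiple of $h$, and this only works because $f_i$ is \emph{itself} an indicator function, so that $c_k h f_i$ already has the right values at every point of $\mathbb{X}$.
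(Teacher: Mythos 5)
Part (b) of your argument is word-for-word the paper's: Stanley's symmetry of the $\mathrm{h}$-vector for Gorenstein rings plus Proposition~\ref{jul5-23}. For part (a), the paper simply cites Proposition~\ref{indicator-function-prop}(a) for $\deg(f_i)={\rm v}_{\mathfrak{p}_i}(I)$ and then invokes an external reference (\cite[Theorem~4.10]{min-dis-generalized}, on Geramita level rings) for the equality ${\rm v}_{\mathfrak{p}_i}(I)={\rm reg}(S/I)$, whereas you give a self-contained argument. Your argument in effect anticipates the socle machinery the paper develops later in Section~\ref{section-gorenstein}: the nonzeroness step ($f_i\notin J=(I,h)$ because a decomposition $f_i=g+hG$ would force $G$ to be a too-low-degree indicator function) is essentially the argument used in the proof of Corollary~\ref{socle-gorenstein-coro}, but there the paper already knows $\deg(f_i)=r_0$ from Proposition~\ref{jul6-23}(a) and uses it only for the Gorenstein case. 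You run the logic the other way --- establish directly that $\overline{f_i}$ is a nonzero socle element of the Artinian reduction (the socle-membership computation with $t_kf_i-c_khf_i$ is a neat and correct evaluation argument), then read off the degree from the level hypothesis. That makes the result independent of the cited reference, at the price of two standard but unproved facts you invoke: that ${\rm v}_{\mathfrak{p}_i}(I)$ and the level property are stable under the extension $K\hookrightarrow\overline{K}$ (true by faithful flatness and comparison of graded pieces, but the paper's Remark~\ref{gor-iff-ext} only records the Gorenstein/Cohen--Macaulay case), and that levelness of $S/I$ passes to its Artinian reduction $S/J$ (true via the Koszul tensor description of the resolution of $S/J$, implicitly used in the proof of Theorem~\ref{socle-gorenstein}(b) as well). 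Both are legitimate, so your proof is correct, just a genuinely different and more self-contained route than the paper's.
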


\begin{proof} (a) The first equality follows from
Proposition~\ref{indicator-function-prop}(a). The second equality was
shown in a more general setting for Geramita level rings in \cite[Theorem~4.10]{min-dis-generalized}.

(b) As $S/I$ is a graded Gorenstein algebra, 
its $\mathrm{h}$-vector is symmetric \cite[Theorems~4.1 and 4.2]{Sta1}.
Then, by Proposition~\ref{jul5-23}, the required equality
follows.
\end{proof}

\begin{corollary}\label{duality-criterion-g} 
If $S/I(\mathbb{X})$ is a Gorenstein ring with regularity $r_0$, then 
there is a vector $\beta=(\beta_1,\ldots,\beta_m)\in K^m$ such
that $\beta_i\neq 0$ for all $i$ and
\begin{equation*}
C_\mathbb{X}(d)^\perp=(\beta_1,\ldots,\beta_m)\cdot
C_\mathbb{X}(r_0-d-1)\ \text{ for all \ $0\leq d\leq r_0$}.
\end{equation*}
\quad Moreover, $\beta$ is any vector that defines a parity  
check matrix of $C_\mathbb{X}(r_0-1)$. 
\end{corollary}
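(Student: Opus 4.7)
The plan is to deduce this corollary as a direct application of the projective duality criterion (Theorem~\ref{duality-criterion}) by verifying both algebraic conditions in part~(b) of that theorem under the Gorenstein hypothesis.

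First, I would invoke Proposition~\ref{jul6-23}(b): since $I$ is Gorenstein, the $\mathrm{h}$-vector of $S/I$ is symmetric (Stanley's theorem cited in the proof of Proposition~\ref{jul6-23}(b)), and via Proposition~\ref{jul5-23} this is equivalent to the Hilbert function equality
\begin{equation*}
H_I(d) + H_I(r_0 - d - 1) = |\mathbb{X}| \quad \text{for all } 0 \leq d \leq r_0.
\end{equation*}

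Next, I would establish that $r_0 = {\rm v}_{\mathfrak{p}_i}(I)$ for every associated prime $\mathfrak{p}_i$ of $I$. A Gorenstein ring is level (its socle is one-dimensional, hence concentrated in a single degree), so Proposition~\ref{jul6-23}(a) applies and yields
\begin{equation*}
\deg(f_i) = {\rm v}_{\mathfrak{p}_i}(I) = {\rm reg}(S/I) = r_0
\end{equation*}
for each $i$, where $f_i$ is the $i$-th standard indicator function of $\mathbb{X}$. Together with the previous step, this verifies condition (b) of Theorem~\ref{duality-criterion}.

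Finally, I would apply Theorem~\ref{duality-criterion}, concluding that condition~(c) holds: there exists $\beta = (\beta_1,\ldots,\beta_m) \in (K^*)^m$ with $C_\mathbb{X}(d)^\perp = \beta \cdot C_\mathbb{X}(r_0 - d - 1)$ for all $0 \leq d \leq r_0$, and the ``moreover'' clause of Theorem~\ref{duality-criterion}(c) identifies $\beta$ as any vector defining a parity check matrix of $C_\mathbb{X}(r_0 - 1)$. There is no substantial obstacle here; the entire argument is a short chaining of the two preparatory results, and the Gorenstein assumption is used only to extract both halves of the hypothesis of the duality criterion.
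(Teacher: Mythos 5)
Your proof is correct and follows exactly the same route as the paper: verify both halves of Theorem~\ref{duality-criterion}(b) via Proposition~\ref{jul6-23} (using Gorenstein $\Rightarrow$ level for the v-number part and Stanley's symmetry for the Hilbert function part), then invoke Theorem~\ref{duality-criterion}(c). The paper merely states this more tersely.
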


\begin{proof} Assume that $S/I(\mathbb{X})$ is a Gorenstein ring. Then, by
Proposition~\ref{jul6-23}, the two conditions 
in Theorem~\ref{duality-criterion}(b) are satisfied and the duality
follows from Theorem~\ref{duality-criterion}(c).  
\end{proof}

\begin{corollary}\cite[Theorem~2]{sarabia7}\label{duality-criterion-original} 
If $I(\mathbb{X})$ is a complete intersection and
$a_\mathbb{X}=r_0-1$ is the $a$-invariant of $S/I(\mathbb{X})$, then
for any integer $d$ such that $0\leq d\leq a_\mathbb{X}$ the linear codes  
$C_\mathbb{X}(d)^\perp$ and 
$C_\mathbb{X}(a_\mathbb{X}-d)$ are monomially equivalent. 
\end{corollary}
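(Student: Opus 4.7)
The plan is to derive this statement as a direct specialization of Corollary~\ref{duality-criterion-g}, using the fact that complete intersections are Gorenstein. Concretely, I would proceed as follows.

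First, since $I(\mathbb{X})$ is a complete intersection, Proposition~\ref{ci-gor} gives that $I(\mathbb{X})$ is Gorenstein. Therefore the hypothesis of Corollary~\ref{duality-criterion-g} is satisfied, and there exists a vector $\beta=(\beta_1,\ldots,\beta_m)\in (K^*)^m$ (any vector defining a parity check matrix of $C_\mathbb{X}(r_0-1)$) such that
\[
C_\mathbb{X}(d)^\perp=\beta\cdot C_\mathbb{X}(r_0-d-1)\quad \text{for all }\ 0\leq d\leq r_0.
\]

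Next, I would match the indexing used in the statement. Since $S/I(\mathbb{X})$ is Cohen--Macaulay of Krull dimension $1$ by Lemma~\ref{aug1-23}, the relation between the $a$-invariant and the regularity gives $a_\mathbb{X}={\rm reg}(S/I)-\dim(S/I)=r_0-1$, so that $a_\mathbb{X}-d=r_0-d-1$. Hence for every $0\leq d\leq a_\mathbb{X}$, which is a subrange of $0\leq d\leq r_0$, the displayed equality above reads
\[
C_\mathbb{X}(d)^\perp=\beta\cdot C_\mathbb{X}(a_\mathbb{X}-d),
\]
showing the desired monomial equivalence.

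There is no genuine obstacle here: the content of the result is already captured by Corollary~\ref{duality-criterion-g}, and the only thing to verify is the translation of $r_0-d-1$ into the $a$-invariant language, which follows from $\dim(S/I)=1$.
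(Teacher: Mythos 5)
Your proposal is correct and matches the paper's proof: both apply Proposition~\ref{ci-gor} to conclude $I(\mathbb{X})$ is Gorenstein and then invoke Corollary~\ref{duality-criterion-g}. You merely spell out the straightforward translation $a_\mathbb{X}=r_0-1$ (so $a_\mathbb{X}-d=r_0-d-1$), which the paper leaves implicit.
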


\begin{proof} Since complete intersection graded ideals are 
Gorenstein (Proposition~\ref{ci-gor}), the result follows at 
once from Corollary~\ref{duality-criterion-g}.
\end{proof}

\begin{definition}\label{essential-m}\cite{dual} We say a standard monomial
$t^e\in\Delta_\prec(I)$ is {\it essential} if it appears in each
standard indicator function $f_i$ of $\mathbb{X}$. 
\end{definition}

Recall that given a finite set $\Gamma$ of standard
monomials of degree $d$, 
${\rm ev}_d(K\Gamma)$ is the image of $K\Gamma$, under the evaluation map 
${\rm ev}_d$ of Eq.~\eqref{evaluation-map}. 

\begin{theorem}\label{combinatorial-condition} 
Let $\mathbb{X}$ be a subset of $\mathbb{P}^{s-1}$, $m=|\mathbb{X}|\geq 2$, and
let $I=I(\mathbb{X})$ be the vanishing ideal of $\mathbb{X}$. 
If there exists $t^e$ essential monomial,
and if $\beta_i$ is the coefficient of $t^e$ in the $i$-th standard indicator
function~$f_i$ of $\mathbb{X}$, for $i=1,\dots,m$, then for any $\Gamma_1\subset\Delta_\prec(I)_d$,
$\Gamma_2\subset \Delta_\prec(I)_k$
satisfying  
\begin{enumerate}
\item $d+k={\rm reg}(S/I)$,
\item $|\Gamma_1|+|\Gamma_2|=|\mathbb{X}|$, and 
\item $t^e$ does not appear in the remainder on division of $u_1u_2$ by $I$
for every $u_1\in\Gamma_1$, $u_2\in\Gamma_2$,
\end{enumerate}
we have 
$\gamma\cdot{\rm ev}_d(K\Gamma_1)={\rm
ev}_k(K\Gamma_2)^\perp$,
where $\gamma=(\beta_1,\dots,
\beta_m)\cdot(f_1(P_1)^{-1},\ldots,f_m(P_m)^{-1})$.
\end{theorem}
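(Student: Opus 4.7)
The plan is to adapt the strategy of \cite[Theorem 5.4]{dual} from the affine to the projective setting, combining a dimension count with a vanishing computation of inner products that uses the existence of the essential monomial in a crucial way. First, I would extract the structural consequences of the hypothesis that $t^e$ is essential: since each $f_i$ is homogeneous of degree ${\rm v}_{\mathfrak{p}_i}(I)$ and $t^e$ appears in every $f_i$, all standard indicator functions share the same degree, namely $\deg(t^e)$. Combined with Lemma~\ref{if}(d), this forces $\deg(t^e)={\rm v}_{\mathfrak{p}_i}(I)=r_0$ for every $i$, so each $f_i\in K\Delta_\prec(I)_{r_0}$. Moreover $\beta_i\neq 0$ for every $i$ by definition, and $f_i(P_i)\neq 0$ because $f_i$ is an indicator of $[P_i]$, so the vector $\gamma$ lies in $(K^*)^m$.

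Next I would observe that $|\Delta_\prec(I)_{r_0}|=H_I(r_0)=m$, which together with the $K$-linear independence of $\{f_1,\ldots,f_m\}$ from Lemma~\ref{if}(c) proves that $\{f_1,\ldots,f_m\}$ is a basis of $K\Delta_\prec(I)_{r_0}$. For the dimension count, recall that standard monomials of degree $d$ are linearly independent modulo $I_d$ and hence map to linearly independent elements of $C_\mathbb{X}(d)$ under ${\rm ev}_d$, so $\dim_K({\rm ev}_d(K\Gamma_1))=|\Gamma_1|$ and $\dim_K({\rm ev}_k(K\Gamma_2))=|\Gamma_2|$. Since componentwise multiplication by $\gamma\in(K^*)^m$ is a $K$-linear automorphism of $K^m$ and $|\Gamma_1|+|\Gamma_2|=|\mathbb{X}|=m$, both $\gamma\cdot{\rm ev}_d(K\Gamma_1)$ and ${\rm ev}_k(K\Gamma_2)^\perp$ have the same dimension $|\Gamma_1|$, and it suffices to prove the inclusion $\gamma\cdot{\rm ev}_d(K\Gamma_1)\subset{\rm ev}_k(K\Gamma_2)^\perp$.

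For the inclusion, I would fix $u_1\in\Gamma_1$ and $u_2\in\Gamma_2$ and compute
\[
\langle\gamma\cdot{\rm ev}_d(u_1),\,{\rm ev}_k(u_2)\rangle\;=\;\sum_{i=1}^{m}\beta_i\,f_i(P_i)^{-1}(u_1u_2)(P_i).
\]
The polynomial $u_1u_2$ is homogeneous of degree $d+k=r_0$, and its remainder $r$ on division by a Gr\"obner basis of $I$ lies in $K\Delta_\prec(I)_{r_0}$ with $(u_1u_2)(P_j)=r(P_j)$ for every $j$. Writing $r=\sum_{i=1}^{m}\lambda_i f_i$ in the basis $\{f_i\}$ built above and evaluating at $P_j$ gives $\lambda_j=r(P_j)/f_j(P_j)=(u_1u_2)(P_j)/f_j(P_j)$. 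Since $t^e$ appears in $f_i$ with coefficient $\beta_i$, the coefficient of $t^e$ in the standard polynomial $r$ is exactly $\sum_{i=1}^{m}\lambda_i\beta_i=\sum_{i=1}^{m}\beta_i f_i(P_i)^{-1}(u_1u_2)(P_i)$, which is the inner product above. Hypothesis~(3) says precisely that this coefficient is zero, completing the orthogonality. The main obstacle here is not the calculation itself but ensuring that the essential-monomial hypothesis gives enough rigidity to make $\{f_i\}$ a basis of $K\Delta_\prec(I)_{r_0}$; once that basis expansion is available, the coefficient-of-$t^e$ identification makes condition (3) do exactly the work required.
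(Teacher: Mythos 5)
Your proof is correct and follows essentially the same approach as the paper's: both arguments reduce the orthogonality claim, via a dimension count, to showing that $\langle\gamma\cdot{\rm ev}_d(u_1),{\rm ev}_k(u_2)\rangle$ equals the coefficient of $t^e$ in the remainder of $u_1u_2$. Your bookkeeping is slightly cleaner than the paper's—you expand the remainder directly in the indicator-function basis and read off both the coefficients $\lambda_j=r(P_j)/f_j(P_j)$ and the $t^e$-coefficient $\sum_j\lambda_j\beta_j$ from that expansion, whereas the paper introduces the change-of-basis matrix $\Lambda$ between the standard-monomial basis and the indicator-function basis and uses $\Lambda\Lambda^{-1}=I_m$—but the underlying argument is the same.
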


\begin{proof} Let $r_0$ be the regularity of $S/I$ and let 
$\Delta_\prec(I)_{r_0}=\{t^{a_1},\ldots,t^{a_m}\}$ be the set of
standard monomials of $S/I$ of degree $r_0$. Then, $t^e=t^{a_\ell}$ for
some $1\leq\ell\leq m$. Since $\{t^{a_i}\}_{i=1}^m$ and
$\{f_i\}_{i=1}^m$ are $K$-bases of $K\Delta_\prec(I)_{r_0}$, there is
an invertible matrix $\Lambda=(\lambda_{i,k})$ of order $m$ with
entries in the field $K$ such that 
\begin{equation}\label{jul28-23-1}
t^{a_i}=\sum_{k=1}^m\lambda_{i,k}f_k,\quad i=1,\ldots,m\ \Rightarrow\
t^{a_i}(P_j)=\lambda_{i,j}f_j(P_j)\quad \forall\ i,j.
\end{equation}
\quad Then,
$\Lambda(f_1,\ldots,f_m)^\top=(t^{a_1},\ldots,t^{a_m})^\top$. Let $\Lambda_i$ be the $i$-th row of $\Lambda$ and let
$\Lambda^{-1}=(\mu_{k,j})$ be the inverse of $\Lambda$. From the
equalities
\begin{equation}\label{jul28-23-2}
f_k=\sum_{j=1}^m\mu_{k,j}t^{a_j}=
\mu_{k,1}t^{a_1}+\cdots+\mu_{k,\ell}t^{a_\ell}+\cdots+\mu_{k,m}t^{a_m},
\quad k=1,\ldots,m,
\end{equation}
we get that $\beta:=(\beta_1,\ldots,\beta_m)=(\mu_{1,\ell},\ldots,\mu_{m,\ell})$, that is,
$\beta$ is the transpose of the $\ell$-th column of $\Lambda^{-1}$.
Then, from the equality $\Lambda\Lambda^{-1}=I_m$, we obtain
\begin{equation}\label{jul28-23-3}
\langle\Lambda_i,\beta\rangle=0\ \mbox{ for }\ i\neq\ell\ \mbox{ and
}\ \langle\Lambda_\ell,\beta\rangle=1. 
\end{equation}
\quad Take $f\in K\Delta_\prec(I)_{r_0}$. We claim that
$\langle{\rm ev}_{r_0}(f),\gamma\rangle=0$ if and only if
the essential monomial $t^{a_\ell}=t^e$ does not appear in $f$.
Writing $f=\sum_{i=1}^m c_it^{a_i}$, $c_i\in K$, from 
Eqs.~\eqref{jul28-23-1} and \eqref{jul28-23-3}, we get 
\begin{align*}
\langle{\rm
ev}_{r_0}(f),\gamma\rangle&=\langle(f(P_1),\ldots,f(P_m)),\gamma\rangle\\
&=\langle((c_1\lambda_{1,1}+\cdots+c_m\lambda_{m,1})f_1(P_1),\ldots,(c_1\lambda_{1,m}+
\cdots+c_m\lambda_{m,m})f_m(P_m)),\gamma\rangle\\
&=\langle c_1\Lambda_1+\cdots+c_m\Lambda_m,\beta\rangle=
c_\ell\langle\Lambda_\ell,\beta\rangle=c_\ell,
\end{align*}
and the claim is proved. Since $\Gamma_1\subset\Delta_\prec(I)_d$ and 
$\Gamma_2\subset \Delta_\prec(I)_k$, it is seen that 
${\rm ev}_d(K\Gamma_1)$ and ${\rm ev}_k(K\Gamma_2)$ have dimension
$|\Gamma_1|$ and $|\Gamma_2|$, respectively. Then, by 
condition (2), it suffices to show the
inclusion $\gamma\cdot{\rm ev}_d(K\Gamma_1)\subset{\rm
ev}_k(K\Gamma_2)^\perp$ because these two vector spaces have 
dimension $|\Gamma_1|$. Take $g\in K\Gamma_1$, $h\in K\Gamma_2$. Then, by
condition (1),  
$\deg(gh)=d+k=r_0$. By the division algorithm \cite[Theorem~3, 
p.~63]{CLO} and condition (3), it
follows that the remainder on division of $gh$ by $I$ is a polynomial $f$ in
$\Delta_\prec(I)_{r_0}$ such that $t^e$ does not appear in $f$. Then,
by the previous claim, one has
$$
0=\langle{\rm ev}_{r_0}(f),\gamma\rangle=\langle{\rm
ev}_{r_0}(gh),\gamma\rangle=\langle\gamma\cdot{\rm ev}_{d}(g),{\rm
ev}_{k}(h)\rangle.
$$
\quad Thus, $\gamma\cdot{\rm ev}_{d}(g)\in{\rm
ev}_k(K\Gamma_2)^\perp$ and the proof is complete.
\end{proof}

\begin{proposition}\label{self-dual}
Let $\mathbb{X}$ be a subset of
$\mathbb{P}^{s-1}$, let 
$I$ be its vanishing ideal, and let $r_0$ be the regularity of $S/I$.
If $I$ is Gorenstein, 
then $C_\mathbb{X}(d)$ is monomially equivalent to
$C_\mathbb{X}(d)^\perp$ for some $1\leq d\leq r_0$ if and only if
$r_0=2d+1$. 
\end{proposition}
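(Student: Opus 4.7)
The plan is to leverage Corollary~\ref{duality-criterion-g} to reduce the question to an equality between two values of the Hilbert function, and then use the strict monotonicity of $H_I$ before its stabilization (Eq.~\eqref{sep18-23}) to force the arithmetic identity $r_0 = 2d+1$.

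Since $I$ is Gorenstein, Corollary~\ref{duality-criterion-g} supplies a vector $\beta=(\beta_1,\ldots,\beta_m)\in (K^*)^m$ such that
\begin{equation*}
C_\mathbb{X}(d)^\perp = \beta\cdot C_\mathbb{X}(r_0-d-1)\qquad\text{for all }0\leq d\leq r_0.
\end{equation*}
The ``if'' direction is immediate: when $r_0=2d+1$, we have $r_0-d-1=d$, and the displayed equality specializes to $C_\mathbb{X}(d)^\perp = \beta\cdot C_\mathbb{X}(d)$, which is the definition of monomial equivalence.

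For the converse, suppose $C_\mathbb{X}(d)$ is monomially equivalent to $C_\mathbb{X}(d)^\perp$ for some $1\leq d\leq r_0$. Monomially equivalent codes have the same dimension, so combined with the displayed equality (which also preserves dimension) one obtains
\begin{equation*}
H_I(d) = \dim_K C_\mathbb{X}(d) = \dim_K C_\mathbb{X}(d)^\perp = \dim_K C_\mathbb{X}(r_0-d-1) = H_I(r_0-d-1).
\end{equation*}
First I rule out $d=r_0$: in that case $r_0-d-1=-1$, so $H_I(r_0-d-1)=0$ by convention, contradicting $H_I(r_0)=|\mathbb{X}|\geq 2$. Hence $1\leq d\leq r_0-1$, and consequently both $d$ and $r_0-d-1$ lie in $\{0,1,\ldots,r_0-1\}$.

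The key step is then to invoke Eq.~\eqref{sep18-23}, which says that $H_I$ is strictly increasing on $\{0,1,\ldots,r_0\}$. Since $d$ and $r_0-d-1$ both fall in the strictly increasing range and satisfy $H_I(d)=H_I(r_0-d-1)$, they must coincide, giving $d=r_0-d-1$ and therefore $r_0=2d+1$. There is no real obstacle here; the only delicate point is remembering to exclude the boundary case $d=r_0$ before appealing to strict monotonicity, so that the two indices being compared both sit inside the strictly increasing portion of $H_I$.
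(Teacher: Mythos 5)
Your proof is correct and follows essentially the same route as the paper: both deduce from Corollary~\ref{duality-criterion-g} and the hypothesis that $H_I(d)=H_I(r_0-d-1)$, and then invoke the strict monotonicity of $H_I$ from Eq.~\eqref{sep18-23} to conclude $d=r_0-d-1$. The only stylistic difference is that you exclude $d=r_0$ as a separate case, whereas the paper absorbs it by noting that the strict inequalities $H_I(e)<H_I(e+1)$ already hold over the slightly larger range $-1\leq e\leq r_0-1$ (so that $H_I$ is injective on $\{-1,0,\ldots,r_0\}$, covering the index $r_0-d-1=-1$ automatically).
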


\begin{proof} $\Rightarrow$) By Corollary~\ref{duality-criterion-g}, 
$\beta\cdot
C_\mathbb{X}(r_0-d-1)=C_\mathbb{X}(d)^\perp=\beta'\cdot C_\mathbb{X}(d)$, where
$\beta,\beta'\in(K^*)^m$. Then, since monomially equivalent codes have
the same dimension, we get
\begin{equation}\label{self-dual-eq}
H_I(r_0-d-1)=m-H_I(d)=H_I(d).
\end{equation}
\quad Hence, recalling that $H_I(e)<H_I(e+1)$ for $-1\leq e\leq
r_0-1$ 
(Eq.~\eqref{sep18-23}), we get the equality 
$r_0-d-1=d$. Thus, $r_0=2d+1$.

$\Leftarrow$) From the equality $d=(r_0-1)/2$ and
Corollary~\ref{duality-criterion-g}, we get
$$
C_\mathbb{X}(d)^\perp=\beta\cdot C_\mathbb{X}(r_0-d-1)=\beta\cdot
C_\mathbb{X}(d),
$$
for some $\beta\in(K^*)^m$, and the proof is complete. 
\end{proof}

\begin{remark} If $C_\mathbb{X}(d)$ is monomially equivalent to
$C_\mathbb{X}(d)^\perp$ for some $d$, then 
$|\mathbb{X}|=2H_I(d)$. This follows from the second equality 
of Eq.~\eqref{self-dual-eq}.  
\end{remark}

\begin{theorem}\label{self-dual-general}
Let $\mathbb{X}$ be a subset of
$\mathbb{P}^{s-1}$, let 
$I$ be its vanishing ideal, and let $r_0$ be the regularity of $S/I$.
If $I$ is Gorenstein, then $C_\mathbb{X}(d)^\perp=C_\mathbb{X}(d)$ 
for some $1\leq d\leq r_0$ if and only if
$r_0=2d+1$ and the vector $(1,\ldots,1)$ defines a parity check matrix of
$C_\mathbb{X}(2d)$. 
\end{theorem}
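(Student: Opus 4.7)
The plan is to invoke Proposition~\ref{self-dual} and Corollary~\ref{duality-criterion-g}, and to pin down the scalar vector $\beta$ by exploiting self-duality together with the Gorenstein hypothesis. Since $I$ is Gorenstein, Corollary~\ref{duality-criterion-g} guarantees that $C_\mathbb{X}(d)^\perp = \beta \cdot C_\mathbb{X}(r_0-d-1)$ for any vector $\beta \in (K^*)^m$ defining a parity check matrix of $C_\mathbb{X}(r_0-1)$, and Lemma~\ref{sep5-23} says such a $\beta$ is unique up to scalar.

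For the forward direction, I would assume $C_\mathbb{X}(d)^\perp = C_\mathbb{X}(d)$. Proposition~\ref{self-dual} immediately yields $r_0 = 2d+1$, so $r_0 - d - 1 = d$ and $r_0 - 1 = 2d$. The key step is to show $(1,\ldots,1) \in C_\mathbb{X}(2d)^\perp$. Self-duality forces $\langle \mathrm{ev}_d(g), \mathrm{ev}_d(h)\rangle = 0$ for all $g, h \in S_d$, that is,
\[
\sum_{i=1}^m (gh)(P_i) = 0\quad \text{for all } g,h \in S_d.
\]
Since every monomial of degree $2d$ factors as a product of two monomials of degree $d$, the set of products $\{gh : g,h \in S_d\}$ spans $S_{2d}$, and therefore $\sum_{i=1}^m f(P_i) = 0$ for every $f \in S_{2d}$, which is precisely the statement $(1,\ldots,1) \in C_\mathbb{X}(2d)^\perp$. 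Because $I$ is Gorenstein, Proposition~\ref{jul6-23}(b) (applied with degree $0$) gives $H_I(2d) = H_I(r_0 - 1) = |\mathbb{X}| - 1$, hence $\dim_K C_\mathbb{X}(2d)^\perp = 1$, and therefore $C_\mathbb{X}(2d)^\perp = K(1,\ldots,1)$. Consequently the vector $(1,\ldots,1)$ defines a parity check matrix of $C_\mathbb{X}(2d)$.

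For the converse, assume $r_0 = 2d+1$ and that $(1,\ldots,1)$ defines a parity check matrix of $C_\mathbb{X}(2d) = C_\mathbb{X}(r_0-1)$. Applying Corollary~\ref{duality-criterion-g} with this choice of $\beta$ and observing that $r_0 - d - 1 = d$, one obtains directly
\[
C_\mathbb{X}(d)^\perp = (1,\ldots,1)\cdot C_\mathbb{X}(d) = C_\mathbb{X}(d).
\]

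The main subtlety lies in the forward direction: Corollary~\ref{duality-criterion-g} only supplies a parity-check vector $\beta$ up to scalar, and the content of the theorem is that self-duality lets one normalize $\beta$ to $(1,\ldots,1)$. The spanning identity $S_d \cdot S_d = S_{2d}$ combined with the Gorenstein equality $H_I(2d) = |\mathbb{X}| - 1$ is what turns the self-orthogonality relations into the single vector equation $(1,\ldots,1) \in C_\mathbb{X}(2d)^\perp$, thereby identifying the unique scalar direction of $\beta$.
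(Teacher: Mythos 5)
Your proposal is correct and takes essentially the same approach as the paper: both directions hinge on Proposition~\ref{self-dual}, Corollary~\ref{duality-criterion-g}, the Gorenstein Hilbert-function equality $H_I(2d)=|\mathbb{X}|-1$, and the observation that degree-$2d$ monomials factor into products of degree-$d$ monomials, so self-orthogonality in degree $d$ forces $\sum_i f(P_i)=0$ for all $f\in S_{2d}$. The only cosmetic difference is that you phrase the forward direction as showing $(1,\ldots,1)\in C_\mathbb{X}(2d)^\perp$ while the paper shows $C_\mathbb{X}(2d)\subset\{x\mid\langle x,(1,\ldots,1)\rangle=0\}$; these are the same statement.
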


\begin{proof} $\Rightarrow$) By Proposition~\ref{self-dual}, $r_0=2d+1$.
We set $\beta=(\beta_1,\ldots,\beta_m)=(1,\ldots,1)$. 
By assumption, $C_\mathbb{X}(d)^\perp=\beta\cdot C_\mathbb{X}(d)$. 
To show the equality $C_\mathbb{X}(2d)=\{x\mid \langle
x,\beta\rangle=0\}$ it suffices to show the inclusion ``$\subset$''
because these vector spaces have dimension $m-1$
(Proposition~\ref{jul6-23}). Take $x\in C_\mathbb{X}(2d)$. Then,
$x=(g(P_1),\ldots,g(P_m))$ for some $g\in S_{2d}$. We can write 
$g=\sum_{i=1}^k\lambda_it^{a_i}t^{c_i}$, with $\lambda_i\in K$ and 
$t^{a_i}$, $t^{c_i}\in S_d$ for all $i$. Then
\begin{align*}
\langle x,\beta\rangle&=\langle(g(P_1),\ldots,g(P_m)),\beta\rangle
=\left\langle\left(\left(\sum_{i=1}^k\lambda_it^{a_i}t^{c_i}\right)(P_1),\ldots,
\left(\sum_{i=1}^k\lambda_it^{a_i}t^{c_i}\right)(P_m)\right),\beta
\right\rangle\\
&=\beta_1\left(\sum_{i=1}^k\lambda_it^{a_i}t^{c_i}\right)(P_1)+\cdots+
\beta_m\left(\sum_{i=1}^k\lambda_it^{a_i}t^{c_i}\right)(P_m)\\
&=\lambda_1\left(\sum_{i=1}^m\beta_it^{a_1}(P_i)t^{c_1}(P_i)\right)+\cdots+
\lambda_k\left(\sum_{i=1}^m\beta_it^{a_k}(P_i)t^{c_k}(P_i)\right)\\
&=\sum_{i=1}^k\lambda_i\left\langle\beta\cdot(t^{a_i}(P_1),\ldots,t^{a_i}(P_m)),
(t^{c_i}(P_1),\ldots,t^{c_i}(P_m))\right\rangle=0,
\end{align*}
where the last equality follows recalling that 
$(t^{a_i}(P_1),\ldots,t^{a_i}(P_m))$,
$(t^{c_i}(P_1),\ldots,t^{c_i}(P_m))$ are in 
$C_\mathbb{X}(d)$ and the equality $C_\mathbb{X}(d)^\perp=\beta\cdot
C_\mathbb{X}(d)$. Thus, $\langle x,\beta\rangle=0$ and the proof is
complete. 

$\Leftarrow$) Since $r_0 = 2d + 1$, by Corollary~\ref{duality-criterion-g},
$C_\mathbb{X}(d)^\perp=\beta\cdot
C_\mathbb{X}(d)$, and $\beta$ is
a parity check matrix of $C_\mathbb{X}(2d)$. By assumption, we then 
have $\beta=c(1,\ldots,1)$, for some constant $c\neq 0$. 
But this implies $C_\mathbb{X}(d)^\perp=
C_\mathbb{X}(d)$.
\end{proof}

\begin{remark} Affine Cartesian codes \cite{cartesian-codes} are 
projective Reed--Muller type codes and their vanishing ideals are
complete intersections. The self dual Cartesian codes can be 
studied using Theorem~\ref{self-dual-general} and 
Proposition~\ref{gorenstein-essential}.
\end{remark}

\begin{lemma}\label{self-dual-lemma}
Let $P$ be the matrix with rows $P_1,\ldots,P_m$ and let 
$C_1,\ldots,C_s$ be the column vectors of $P$. If $t_s(P_i)=1$ for all $i$ and 
$C_\mathbb{X}(d)^\perp=C_\mathbb{X}(d)$ for some 
$0\leq d\leq r_0$, then 
$\langle C_i,C_j\rangle=0$ for all $1\leq i\leq j\leq s$ and $m\equiv
0\ {\rm mod} (p)$, $p={\rm char}(K)$.  
\end{lemma}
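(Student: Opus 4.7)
The approach is to recognize each column $C_j$ as an element of $C_\mathbb{X}(d)$, by exploiting the normalization $t_s(P_k)=1$, and then to convert the self-duality hypothesis into plain self-orthogonality.

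First I would note that for every $1\le j\le s$ the monomial $t_j\, t_s^{d-1}$ is homogeneous of degree $d$, and since $t_s(P_k)=1$ for all $k$, its evaluation satisfies
\[
{\rm ev}_d(t_j\, t_s^{d-1})=\bigl(t_j(P_1),\ldots,t_j(P_m)\bigr)=C_j.
\]
Hence $C_j\in C_\mathbb{X}(d)$ for every $j$; in particular $C_s=(1,\ldots,1)\in C_\mathbb{X}(d)$. This step packages the hypothesis $t_s(P_i)=1$ into a single uniform statement: every column of $P$ lies in the Reed--Muller-type code of degree $d$.

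Next, using the hypothesis $C_\mathbb{X}(d)^\perp=C_\mathbb{X}(d)$, any two vectors of $C_\mathbb{X}(d)$ are orthogonal. Applying this to the pair $C_i,C_j$ yields $\langle C_i,C_j\rangle=0$ for all $1\le i\le j\le s$, which is the first claim. Specializing to $i=j=s$ gives
\[
0=\langle C_s,C_s\rangle=\sum_{k=1}^m 1=m\cdot 1_K,
\]
so $m\equiv 0\pmod{p}$, which is the second claim.

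There is no real obstacle: the argument is a one-line consequence of the self-orthogonality of $C_\mathbb{X}(d)$ once columns are rewritten as degree-$d$ evaluations. The only subtlety is that the trick requires $d\ge 1$, so that the padding exponent $d-1$ is nonnegative; the boundary case $d=0$ would force $\dim_K C_\mathbb{X}(0)=m/2=1$ and hence $m=2$ together with ${\rm char}(K)=2$, a degenerate situation implicitly excluded by the context of Theorem~\ref{self-dual-general}.
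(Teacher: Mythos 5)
Your proof is correct and essentially identical to the paper's: both identify each column $C_j$ as ${\rm ev}_d(t_j t_s^{d-1})\in C_\mathbb{X}(d)$, then read off orthogonality from $C_\mathbb{X}(d)=C_\mathbb{X}(d)^\perp$, with the congruence $m\equiv 0\pmod p$ coming from $C_s=(1,\ldots,1)$. Your closing remark about the case $d=0$ is a fair observation — the paper's proof uses the same monomial $t_it_s^{d-1}$ and so silently carries the same requirement $d\ge 1$.
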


\begin{proof} The set of monomials $\{t_it_s^{d-1}\}_{i=1}^s$ is
contained in $S_d$. Then
$$
C_i=((t_it_s^{d-1})(P_1),\ldots,(t_it_s^{d-1})(P_m))\in
C_\mathbb{X}(d)\mbox{ for }i=1,\ldots,s.
$$
\quad Thus, $C_i\in C_\mathbb{X}(d)^\perp$ for $i=1,\ldots,s$, and 
$\langle C_i,C_j\rangle=0$ for all $1\leq i\leq j\leq s$. Since 
$C_s$ is equal to $(1,\ldots,1)$, we get $m\cdot 1=0$ and $m\equiv
0\ {\rm mod} (p)$.
\end{proof}

\begin{proposition}\label{self-dual-1}
Let $\mathbb{X}$ be a subset of
$\mathbb{P}^{s-1}$, let 
$I$ be its vanishing ideal, and let $C_1,\ldots,C_s$ be the columns of
the matrix $P$ with rows $P_1,\ldots,P_m$. Suppose $t_s(P_i)=1$ for all
$i$ and $I$ contains no linear form. The following hold.
\begin{enumerate} 
\item $\{C_i\}_{i=1}^s$ is linearly independent over $K$ 
and $C_\mathbb{X}(1)=K\{C_i\}_{i=1}^s$.
\item $C_\mathbb{X}(1)^\perp=C_\mathbb{X}(1)$ if and only if $m=2s$ and 
$\langle C_i,C_j\rangle=0$ for all $1\leq i\leq j\leq s$.
\end{enumerate}
\end{proposition}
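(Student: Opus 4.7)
The plan is to reduce the entire proposition to elementary linear algebra on the degree-one evaluation map $\mathrm{ev}_1 \colon S_1 \to K^m$ of Eq.~\eqref{evaluation-map}. The key observation is that $C_i = \mathrm{ev}_1(t_i)$ for $i = 1, \ldots, s$, so $C_\mathbb{X}(1) = \mathrm{ev}_1(S_1)$ is spanned by $C_1, \ldots, C_s$, and these vectors are precisely the images under $\mathrm{ev}_1$ of the standard $K$-basis of $S_1$.

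For part (1), I would note that $\ker(\mathrm{ev}_1|_{S_1}) = I \cap S_1$, which vanishes by the hypothesis that $I$ contains no linear form. Injectivity of $\mathrm{ev}_1$ on $S_1$ then delivers both the $K$-linear independence of $\{C_i\}_{i=1}^s$ and the equality $C_\mathbb{X}(1) = K\{C_1, \ldots, C_s\}$ simultaneously, with no further work needed.

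For part (2), both directions rest on part (1) together with the standard formula $\dim_K(C_\mathbb{X}(1)^\perp) = m - \dim_K(C_\mathbb{X}(1)) = m - s$ of \cite[Theorem~1.2.1]{Huffman-Pless}. For the forward direction, self-duality forces $s = m - s$, so $m = 2s$, while the orthogonality $\langle C_i, C_j\rangle = 0$ for all $i, j$ follows either directly from $C_\mathbb{X}(1) \subset C_\mathbb{X}(1)^\perp$ applied to the spanning set $\{C_i\}$, or by invoking Lemma~\ref{self-dual-lemma} with $d = 1$. For the converse, the pairwise orthogonality of the generators $C_1, \ldots, C_s$ of $C_\mathbb{X}(1)$ (extended to all $i, j$ by symmetry of the inner product) yields $C_\mathbb{X}(1) \subset C_\mathbb{X}(1)^\perp$; the assumption $m = 2s$ combined with part (1) makes both subspaces $s$-dimensional, so the inclusion is an equality.

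There is no substantive obstacle here: the hypothesis $I_1 = 0$ is used exclusively to secure injectivity in part (1), and the normalization $t_s(P_i) = 1$ plays no essential role in the argument itself, beyond aligning the setup with Lemma~\ref{self-dual-lemma} so that its conclusion can be borrowed for the orthogonality in the forward implication of (2).
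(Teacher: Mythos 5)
Your proof is correct and follows essentially the same route as the paper's. The only (minor) divergence is in part (1): the paper notes that $I_1 = 0$ forces $t_i \notin \mathrm{in}_\prec(I)$, hence $\Delta_\prec(I)_1 = \{t_1,\ldots,t_s\}$, and then appeals to the standard fact that evaluations of standard monomials of a fixed degree form a basis of the corresponding Reed--Muller-type code; you instead observe directly that $\ker(\mathrm{ev}_1|_{S_1}) = I_1 = 0$, so $\mathrm{ev}_1$ is injective on $S_1$ and carries the monomial basis $\{t_i\}$ to the linearly independent spanning set $\{C_i\}$. Both arguments encode the same fact, $H_I(1) = s$, and your version is marginally more elementary since it avoids Gr\"obner machinery. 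Part (2) is identical in substance to the paper: dimension count gives $m = 2s$, orthogonality of the $C_i$ follows from $C_\mathbb{X}(1) \subset C_\mathbb{X}(1)^\perp$ (or Lemma~\ref{self-dual-lemma} with $d=1$), and the converse uses part (1) plus the dimension equality $s = m - s$.
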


\begin{proof} (1) Let $\prec$ be the GRevLex order. As $t_i\notin
{\rm in}_\prec(I)$ for all $i$, one has
$\Delta_\prec(I)_{1}=\{t_i\}_{i=1}^s$. Hence, 
$\{{\rm ev}_1(t_i)\}_{i=1}^s$ is linearly independent over
$K$, and  ${\rm ev}_1(t_i)=C_i$ is the $i$-th column of $P$ for
all $i$. Thus, since $C_\mathbb{X}(1)={\rm ev}_1(S_1)$,
we get $C_\mathbb{X}(1)=K\{C_i\}_{i=1}^s$.

(2) $\Rightarrow$) Note that $H_I(1)$ is equal to $s$ because $I$ contains no linear
form. Hence, by the self duality equality
$C_\mathbb{X}(1)^\perp=C_\mathbb{X}(1)$ and Lemma~\ref{self-dual-lemma},
we have $m=2H_I(1)=2s$ and $\langle C_i,C_j\rangle=0$ for all $1\leq i\leq j\leq s$.

$\Leftarrow$) By part (1), $C_\mathbb{X}(1)=K\{C_i\}_{i=1}^s$ and
$s=\dim_K(C_\mathbb{X}(1))=H_I(1)$. Then, by hypothesis, $C_\mathbb{X}(1)\subset
C_\mathbb{X}(1)^\perp$. To show equality notice that $C_\mathbb{X}(1)$
and $C_\mathbb{X}(1)^\perp$ have dimension $s$ because by hypothesis 
$s=m-s$.
\end{proof}

\begin{proposition}\label{selfo-char} If $0\leq d\leq r_0$ and 
 $\beta=(1,\ldots,1)$, then $C_\mathbb{X}(d)\subset
 C_\mathbb{X}(d)^\perp$ if and only if 
\begin{equation}\label{mar6-24}
C_\mathbb{X}(2d)\subset\{x\mid \langle x,\beta\rangle=0\}.
\end{equation}
\quad Furthermore, if $C_\mathbb{X}(d)$ is self orthogonal, then
$2d<r_0$.
\end{proposition}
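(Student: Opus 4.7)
The plan is to exploit the multiplicative compatibility of the evaluation maps, namely the identity
$$\langle {\rm ev}_d(f), {\rm ev}_d(g)\rangle=\sum_{i=1}^m f(P_i)g(P_i)=\sum_{i=1}^m (fg)(P_i)=\langle {\rm ev}_{2d}(fg),\beta\rangle$$
for all $f,g\in S_d$, where $\beta=(1,\ldots,1)$. This reduces bilinear orthogonality of images of ${\rm ev}_d$ to a linear orthogonality statement one degree up.

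For the equivalence, I would argue as follows. The inclusion $C_\mathbb{X}(d)\subset C_\mathbb{X}(d)^\perp$ is by definition equivalent to $\langle {\rm ev}_d(f),{\rm ev}_d(g)\rangle=0$ for every $f,g\in S_d$, which by the identity above is equivalent to $\langle {\rm ev}_{2d}(fg),\beta\rangle=0$ for all $f,g\in S_d$. Since every monomial $t^c$ of degree $2d$ can be decomposed as $t^a t^b$ with $|a|=|b|=d$, the $K$-span of $\{fg\mid f,g\in S_d\}$ is all of $S_{2d}$. By $K$-linearity of ${\rm ev}_{2d}$, the condition therefore upgrades to $\langle {\rm ev}_{2d}(h),\beta\rangle=0$ for every $h\in S_{2d}$, which is exactly $C_\mathbb{X}(2d)\subset\{x\mid \langle x,\beta\rangle=0\}$. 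The converse implication follows immediately by specializing $h=fg$.

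For the ``furthermore'' part, suppose $C_\mathbb{X}(d)$ is self orthogonal, so by the equivalence just proved, $C_\mathbb{X}(2d)$ lies in the hyperplane $\beta^\perp=\{x\mid \langle x,\beta\rangle=0\}$, which is a proper subspace of $K^m$ since $\beta=(1,\ldots,1)\neq 0$. If $2d\geq r_0$, then by Eq.~\eqref{sep18-23} we have $H_I(2d)=|\mathbb{X}|=m$, so $\dim_K C_\mathbb{X}(2d)=m$ and hence $C_\mathbb{X}(2d)=K^m$, contradicting the proper containment. Therefore $2d<r_0$.

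There is no real obstacle here; the only subtlety is the observation that $S_d\cdot S_d$ spans $S_{2d}$, which is immediate for the standard grading. The whole argument is essentially a one-line manipulation once one writes down the evaluation identity.
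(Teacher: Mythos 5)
Your proof is correct and follows essentially the same route as the paper's: the key observation that $\langle{\rm ev}_d(f),{\rm ev}_d(g)\rangle=\langle{\rm ev}_{2d}(fg),\beta\rangle$ for $\beta=(1,\ldots,1)$ together with the fact that $S_d\cdot S_d$ spans $S_{2d}$, and the dimension/regularity contradiction for the furthermore part. The paper packages the forward direction by reference to the computation in Theorem~\ref{self-dual-general}, but the underlying argument is the same monomial decomposition you use.
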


\begin{proof} $\Rightarrow$) This implication follows from the proof of
Theorem~\ref{self-dual-general}.

$\Leftarrow$) Take $x\in C_\mathbb{X}(d)$, $y\in C_\mathbb{X}(d)$.
Then, $x=(g(P_1),\ldots,g(P_m))$, $y=(h(P_1),\ldots,h(P_m))$ for some
$g,h\in S_d$. Thus, 
$$
x\cdot y=(g(P_1),\ldots,g(P_m))\cdot(h(P_1),\ldots,h(P_m))
=((gh)(P_1),\ldots,(gh)(P_m))\in C_\mathbb{X}(2d).
$$
\quad Hence, $\langle x\cdot y,\beta\rangle=\langle x,
y\rangle=0$, and $x\in C_\mathbb{X}(d)^\perp$.

Now, assume that $C_\mathbb{X}(d)$ is self orthogonal. 
Then, by Eq.~\eqref{mar6-24}, $H_I(2d)\leq m-1$ because 
$\dim_K(C_\mathbb{X}(2d))$ is equal to $H_{I}(2d)$ and the dimension
of $\{x\mid \langle 
x,\beta\rangle=0\}$ is $m-1$. To show that $2d<r_0$ we argue by
contradiction assuming that  
$2d\geq r_0$. Then, by Eq.~\eqref{sep18-23}, one has $H_I(2d)=H_I(r_0)=m$, a 
contradiction.
\end{proof}

\begin{corollary}\label{selfd-char} If $0\leq d\leq r_0$ and 
 $\beta=(1,\ldots,1)$, then $C_\mathbb{X}(d)=C_\mathbb{X}(d)^\perp$ if and only if 
$$C_\mathbb{X}(2d)\subset\{x\mid \langle x,\beta\rangle=0\}\ \mbox{ and
}\ |\mathbb{X}|=2H_I(d). $$
\end{corollary}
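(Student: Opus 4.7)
The plan is to reduce Corollary~\ref{selfd-char} directly to Proposition~\ref{selfo-char} together with a dimension count. The point is that self duality is equivalent to self orthogonality plus the equality of dimensions $\dim_K(C_\mathbb{X}(d)) = \dim_K(C_\mathbb{X}(d)^\perp)$. Since $\dim_K(C_\mathbb{X}(d)) = H_I(d)$ and $\dim_K(C_\mathbb{X}(d)^\perp) = |\mathbb{X}| - H_I(d)$, this dimension equality is precisely $|\mathbb{X}| = 2H_I(d)$.

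For the forward direction, I would assume $C_\mathbb{X}(d) = C_\mathbb{X}(d)^\perp$. Then $C_\mathbb{X}(d)$ is certainly self orthogonal, so Proposition~\ref{selfo-char} gives the inclusion $C_\mathbb{X}(2d) \subset \{x \mid \langle x, \beta\rangle = 0\}$. Moreover, from $\dim_K(C_\mathbb{X}(d)) = \dim_K(C_\mathbb{X}(d)^\perp)$ and the two expressions above for these dimensions, I immediately obtain $|\mathbb{X}| = 2H_I(d)$.

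For the reverse direction, assume the two conditions. The inclusion hypothesis, via Proposition~\ref{selfo-char}, yields $C_\mathbb{X}(d) \subset C_\mathbb{X}(d)^\perp$. Using $|\mathbb{X}| = 2H_I(d)$, both spaces have the same $K$-dimension, namely $H_I(d)$, so the inclusion must be an equality.

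There is no real obstacle here; the statement is essentially bookkeeping once Proposition~\ref{selfo-char} is in hand. The only subtle point is making sure that the dimension count does not require $2d \leq r_0$ or some similar range condition: but $\dim_K(C_\mathbb{X}(d)) = H_I(d)$ holds for all $d \geq 0$ independently of the regularity, so no such restriction is needed.
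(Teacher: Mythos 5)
Your proof is correct and takes essentially the same approach as the paper, which simply notes that the corollary ``follows readily from Proposition~\ref{selfo-char}''; you have filled in the routine dimension count ($\dim_K C_\mathbb{X}(d)=H_I(d)$, $\dim_K C_\mathbb{X}(d)^\perp=|\mathbb{X}|-H_I(d)$) that upgrades self orthogonality to self duality.
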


\begin{proof} This follows readily from
Proposition~\ref{selfo-char}. 
\end{proof}

\section{Duality of projective evaluation codes and 
Gorenstein ideals}\label{section-gorenstein}

To avoid repetitions, we continue to employ 
the notations and definitions used in
Section~\ref{section-prelim}. The
following lemma relates the regularity and 
the socle of Artinian rings (Eq.~\eqref{socle-def}). 

\begin{lemma}{\rm(\cite[Proposition~4.14]{eisenbud-syzygies}, \cite{Fro3}, \cite[Lemma
5.3.3]{monalg-rev})}\label{reg-socle} Let  $J$ be a homogeneous ideal
in a polynomial ring $S$. Suppose $S/J$ is Artinian. Then,

{\rm(a)} The regularity ${\rm reg}(S/J)$ of $S/J$ is the largest $d$ such that
$(S/J)_d\neq(0)$.

{\rm(b}) The ring $S/J$ is level if and only if there is
$d\geq 1$ such that ${\rm Soc}(S/J)$ is generated by homogeneous
elements of degree $d$.
\end{lemma}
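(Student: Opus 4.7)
The plan is to exploit the standard identification between the last Betti numbers of the minimal graded free resolution of $S/J$ and its socle. Since $S/J$ is Artinian, $\dim(S/J)=0$, so ${\rm depth}(S/J)=0$, and the Auslander--Buchsbaum formula in Eq.~\eqref{AB} gives ${\rm pd}_S(S/J)=s$. Thus the resolution has length exactly $s$ and ends in $\bigoplus_j S(-j)^{b_{s,j}}$. Computing ${\rm Tor}_s^S(S/J,K)$ by tensoring the Koszul complex on $t_1,\ldots,t_s$ with $S/J$ and reading off the top Koszul homology yields the key formula
\begin{equation*}
b_{s,j}\;=\;\dim_K{\rm Tor}_s^S(S/J,K)_j\;=\;\dim_K{\rm Soc}(S/J)_{j-s},
\end{equation*}
since the top Koszul differential sends $a\cdot e_{1\wedge\cdots\wedge s}$ to $\sum_i(-1)^{i-1}t_ia\cdot e_{1\wedge\cdots\widehat{i}\cdots\wedge s}$, whose kernel in degree $j$ is exactly the annihilator of $\mathfrak{m}$ inside $(S/J)_{j-s}$.

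For part (a), set $d^\ast:=\max\{d:(S/J)_d\neq 0\}$. Any nonzero homogeneous element of $(S/J)_{d^\ast}$ is annihilated by $\mathfrak{m}$ for degree reasons, hence lies in the socle; so ${\rm Soc}(S/J)_{d^\ast}\neq 0$ and $b_{s,\,s+d^\ast}\neq 0$, giving ${\rm reg}(S/J)\geq d^\ast$. For the reverse inequality I would invoke the local-cohomology characterization ${\rm reg}(M)=\max_{i\geq 0}\{i+a_i(M)\}$ with $a_i(M):=\sup\{j:H^i_\mathfrak{m}(M)_j\neq 0\}$. Because $S/J$ is Artinian one has $H^i_\mathfrak{m}(S/J)=0$ for $i>0$ and $H^0_\mathfrak{m}(S/J)=S/J$, so only the $i=0$ term contributes and $a_0(S/J)=d^\ast$, forcing ${\rm reg}(S/J)=d^\ast$.

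For part (b), the ring $S/J$ is level by definition iff there is a unique $j$ with $b_{s,j}\neq 0$. By the displayed formula this is equivalent to ${\rm Soc}(S/J)$ being concentrated in a single degree $d:=j-s$. Since $\mathfrak{m}\cdot{\rm Soc}(S/J)=0$, any homogeneous $K$-basis of the socle is automatically a minimal homogeneous generating set; therefore concentration in a single degree is equivalent to being generated by homogeneous elements of that one degree. Outside the trivial case $J=\mathfrak{m}$, where $S/J=K$ has socle in degree $0$, the hypothesis that $S/J$ is Artinian and nontrivial forces $d\geq 1$.

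The main obstacle is the Koszul identification $b_{s,j}=\dim_K{\rm Soc}(S/J)_{j-s}$; once that is in place, both (a) and (b) follow by short bookkeeping arguments. Either part can also be deduced directly from the cited \cite[Proposition~4.14]{eisenbud-syzygies} or \cite[Lemma~5.3.3]{monalg-rev}, which is the route I would take in the final write-up to keep the exposition concise.
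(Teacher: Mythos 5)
The paper does not prove this lemma; it is stated with citations to the listed references, so there is no internal argument to compare against. Your proof is correct and follows the standard route those sources take: the key is the Koszul identification $b_{s,j}=\dim_K{\rm Tor}_s^S(S/J,K)_j=\dim_K{\rm Soc}(S/J)_{j-s}$, after which both parts are bookkeeping, and you justify this identification correctly (the top Koszul homology is the kernel of $a\mapsto(t_1a,\ldots,t_sa)$, with the grading shift by $s$ coming from $\deg e_{1\wedge\cdots\wedge s}=s$). One remark worth making: for the upper bound ${\rm reg}(S/J)\leq d^\ast$ in (a) you appeal to the local-cohomology characterization of regularity, but a more elementary argument stays entirely inside the Koszul computation and inside the paper's Betti-number definition of ${\rm reg}$. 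Namely, ${\rm Tor}_i^S(S/J,K)_j$ is a subquotient of $(K_i\otimes_S S/J)_j\cong\bigoplus_{|\sigma|=i}(S/J)_{j-i}$, which vanishes whenever $j-i>d^\ast$; hence $b_{i,j}=0$ for $j-i>d^\ast$ and ${\rm reg}(S/J)=\max\{j-i:b_{i,j}\neq 0\}\leq d^\ast$ directly. Your treatment of the edge case in (b) is essentially right, though it would be cleaner to say that $d\geq 1$ is equivalent to ${\rm Soc}(S/J)_0=0$, i.e.\ to $J\subsetneq\mathfrak{m}$, which holds automatically in the paper's setting since $J$ is an Artinian reduction of $I(\mathbb{X})$ with $|\mathbb{X}|\geq 2$.
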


\begin{theorem}\label{socle-gorenstein} 
Let $S/I$ be a graded ring with regularity $r_0$ and let $S/J$
be an Artinian reduction of $S/I$. The following hold.
\begin{enumerate} 
\item[(a)] If $S/I$ is a Gorenstein ring, then
$\Delta_\prec(J)_{r_0}=\{t^a\}$ and 
${\rm Soc}(S/J)=K(t^a+J)$.
\item[(b)] If $S/I$ is a level ring with
symmetric $\mathrm{h}$-vector, then $S/I$ is Gorenstein.
\end{enumerate}
\end{theorem}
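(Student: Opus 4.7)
The strategy for both parts is to pass to the Artinian reduction $S/J$ and analyze its socle. Three ingredients are needed: (i) by Lemma~\ref{reg-socle}(a), the largest nonzero graded component of $S/J$ lives in degree $r_0$, so $(S/J)_{r_0}$ is automatically contained in $\mathrm{Soc}(S/J)$ (there is no positive-degree element to multiply it into a nonzero class); (ii) from the Hilbert series computation carried out in the proof of Lemma~\ref{h-vector-H}, one has $H_J(d)=h_d$ for $0\le d\le r_0$, so in particular $|\Delta_\prec(J)_{r_0}|=h_{r_0}$; and (iii) by the definition of type given in the preliminaries, $\mathrm{type}(S/I)=\dim_K\mathrm{Soc}(S/J)$, with Gorenstein equivalent to this integer being~$1$.

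For part (a), assume $S/I$ is Gorenstein. By Stanley's theorem the h-vector is symmetric, so $h_{r_0}=h_0=1$, and ingredient (ii) gives $|\Delta_\prec(J)_{r_0}|=1$. Let $t^a$ be that unique standard monomial; then $K(t^a+J)=(S/J)_{r_0}$. By (i) this line sits inside $\mathrm{Soc}(S/J)$, and by (iii) the socle is itself one-dimensional, so $\mathrm{Soc}(S/J)=K(t^a+J)$, proving both assertions of (a) simultaneously.

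For part (b), assume $S/I$ is level with symmetric h-vector. I want to show $\mathrm{Soc}(S/J)$ is one-dimensional. The level hypothesis together with the standard fact that the level property is preserved under Artinian reduction by a regular linear form (a consequence of the invariance of the top graded Betti numbers under killing such a form, using the short exact sequence $0\to(S/I)(-1)\xrightarrow{h}S/I\to S/J\to 0$ and the associated change in the minimal resolution) implies that $S/J$ is level. Then Lemma~\ref{reg-socle}(b) tells us $\mathrm{Soc}(S/J)$ is concentrated in a single degree $d_0$. Ingredient (i) forces $d_0=r_0$ and $\mathrm{Soc}(S/J)=(S/J)_{r_0}$, and the symmetry assumption combined with (ii) gives $\dim\mathrm{Soc}(S/J)=h_{r_0}=h_0=1$, so $\mathrm{type}(S/I)=1$ and $S/I$ is Gorenstein.

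The delicate step is the claim in (b) that the level property of $S/I$ transfers to its Artinian reduction $S/J$, which is needed in order to apply Lemma~\ref{reg-socle}(b) to $S/J$. This is a standard but non-obvious fact about minimal free resolutions under regular sequences, and the cleanest presentation is probably to invoke the preservation of the shape of the top row of the Betti table; alternatively one could bypass it by arguing directly that the top Betti numbers of $S/I$ compute (up to shift) the graded components of $\mathrm{Soc}(S/J)$, so that having a single nonzero $b_{g,j}$ is equivalent to $\mathrm{Soc}(S/J)$ being concentrated in one degree.
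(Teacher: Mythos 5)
Your proof is correct and follows essentially the same route as the paper's: both arguments reduce to the observation that $(S/J)_{r_0}$ is one-dimensional (via the symmetric h-vector and $h(S/I)=h(S/J)$), that this top graded piece sits inside the socle, and that the Gorenstein (resp.\ level) hypothesis controls the dimension (resp.\ degree-concentration) of the socle. The paper's proof of (b) invokes the same transfer of the level property from $S/I$ to its Artinian reduction $S/J$ that you flag as the "delicate step"; it does so tacitly via the phrase ``as $S/I$ is level, the socle of $S/J$ is generated by homogeneous elements of degree $r_0$ (cf.\ Lemma~\ref{reg-socle}(b))'', so you were right to single it out, and your mapping-cone justification is the standard one. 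The only cosmetic difference is that the paper concludes (b) by explicitly exhibiting $\mathrm{Soc}(S/J)=K(t^a+J)$ through the remainder on division by $J$, whereas you conclude by a dimension count $\dim_K\mathrm{Soc}(S/J)=h_{r_0}=1$; these are equivalent.
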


\begin{proof} (a) As $S/I$ is a Cohen--Macaulay standard graded
$K$-algebra (Lemma~\ref{aug1-23}) 
its $\mathrm{h}$-vector has
the form $\mathrm{h}(S/I)=(h_0,\ldots,h_{r_0})$ 
\cite[Theorem~6.4.1]{monalg-rev}. A result of Stanley shows that the 
$\mathrm{h}$-vector of $S/I$ is symmetric \cite[Theorems~4.1 and 4.2]{Sta1} 
and in particular we have $h_{r_0}=h_0=1$. By \cite{Sta1},
\cite[Theorem~5.2.5]{monalg-rev}, one has
$\mathrm{h}(S/I)=\mathrm{h}(S/J)$. Then, $h_d=\dim_K(S/J)_d$ for all 
$0\leq d\leq r_0$ and $\dim_K(S/J)_{r_0+1}=0$. Therefore, 
$\Delta_\prec(J)_{r_0}=\{t^a\}$ and $t^a+J$ is a non-zero element of
the socle of $S/J$. Then, as $S/I$ is Gorenstein, the socle of $S/J$
has dimension $1$ as a $K$-vector space
\cite[Corollary~5.3.5]{monalg-rev}, and consequently ${\rm
Soc}(S/J)=K(t^a+J)$.

(b) By the proof of part (a), one has that
$\Delta_\prec(J)_{r_0}=\{t^a\}$ and 
$t^a+J$ is a non-zero element of the socle of $S/J$. Thus, as $S/I$ is
level, the socle of $S/J$ is generated by homogeneous elements of
degree $r_0$ (cf. Lemma~\ref{reg-socle}(b)). It suffices to show that ${\rm
Soc}(S/J)\subset K(t^a+J)$. Take any non-zero
homogeneous element $f+J$ in the socle of $S/J$ of degree $r_0$. The
remainder $r_f$ on division of $f$ by $J$ is a standard polynomial of
$S/J$ of degree $r_0$. Hence, $r_f=\lambda t^a$, $\lambda\in K^*$, and
$f+J=r_f+J=\lambda(t^a+J)$. Thus, $f+J\in K(t^a+J)$ and the proof is
complete.
\end{proof}

\begin{corollary}\label{socle-gorenstein-coro} 
Let $F=\{f_1,\ldots,f_m\}$ be the set of standard indicator functions
of $\mathbb{X}$ and let $r_0$ be the regularity of $S/I$. If $I$ is
Gorenstein, then $\Delta_\prec(J)_{r_0}=\{t^a\}$ and for each $i$, $\deg(f_i)=r_0$,   
$$
{\rm Soc}(S/J)=K(f_i+J)=K(t^a+J),
$$
and the remainder on division of $f_i$ by $J$ has the form
$r_{f_i}=\lambda_i t^a$ for some $\lambda_i\in K^*$.
\end{corollary}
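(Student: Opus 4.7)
The plan is to combine Theorem~\ref{socle-gorenstein}(a), Proposition~\ref{jul6-23}(a), and the division algorithm, reducing the entire corollary to a single nondegeneracy check. First, since $I$ is Gorenstein, Theorem~\ref{socle-gorenstein}(a) immediately gives $\Delta_\prec(J)_{r_0}=\{t^a\}$ together with the socle description ${\rm Soc}(S/J)=K(t^a+J)$. Moreover, a Gorenstein ring is level, so Proposition~\ref{jul6-23}(a) yields $\deg(f_i)={\rm v}_{\mathfrak{p}_i}(I)=r_0$ for every~$i$.

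Next, I would analyze the remainders. Because $f_i$ is homogeneous of degree $r_0$ and $J$ is a graded ideal admitting a homogeneous Gr\"obner basis, the remainder $r_{f_i}$ on division of $f_i$ by $J$ is itself homogeneous of degree $r_0$, and by construction $r_{f_i}\in K\Delta_\prec(J)_{r_0}=Kt^a$. Hence $r_{f_i}=\lambda_i t^a$ for some $\lambda_i\in K$, and in $S/J$ we have $f_i+J=\lambda_i(t^a+J)$.

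The main obstacle, and indeed the only substantive step, is showing $\lambda_i\neq 0$, equivalently $f_i\notin J$. The strategy is proof by contradiction: suppose $f_i=g+hq$ with $g\in I_{r_0}$ and $q\in S_{r_0-1}$. Since $h$ is a linear form regular on $S/I$, Remark~\ref{mar17-24} gives $h(P_j)\neq 0$ for every $j$. Evaluating $f_i=g+hq$ at each $P_j$ (and using $g\in I$) yields $f_i(P_j)=h(P_j)q(P_j)$, so $q(P_i)=f_i(P_i)/h(P_i)\neq 0$ while $q(P_j)=0$ for $j\neq i$. Thus $q$ would be an indicator function of $[P_i]$ of degree $r_0-1$, contradicting Lemma~\ref{if}(b) combined with ${\rm v}_{\mathfrak{p}_i}(I)=r_0$ established above.

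With $\lambda_i\neq 0$ in hand, the identity $f_i+J=\lambda_i(t^a+J)$ forces $K(f_i+J)=K(t^a+J)={\rm Soc}(S/J)$, which together with the remainder formula $r_{f_i}=\lambda_i t^a$ and the degree statement $\deg(f_i)=r_0$ completes all assertions of the corollary.
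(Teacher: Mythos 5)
Your proof is correct and follows essentially the same route as the paper: invoke Theorem~\ref{socle-gorenstein}(a) for the socle, Proposition~\ref{jul6-23}(a) for $\deg(f_i)=r_0$, and then rule out $f_i\in J$ by writing $f_i=G_i+hH_i$ with $G_i\in I_{r_0}$ and $H_i\in S_{r_0-1}$, observing via Remark~\ref{mar17-24} that $h(P_j)\neq 0$ so that $H_i$ would be an indicator function of $[P_i]$ of degree $r_0-1$, contradicting Lemma~\ref{if}(b). The only superficial difference is that you spell out the evaluation $q(P_j)=f_i(P_j)/h(P_j)$ explicitly, which the paper leaves implicit.
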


\begin{proof} By Theorem~\ref{socle-gorenstein}, $\Delta_\prec(J)_{r_0}=\{t^a\}$ and 
${\rm Soc}(S/J)=K(t^a+J)$. Any Gorenstein ring is a level ring and, by
Proposition~\ref{jul6-23}, $\deg(f_i)={\rm v}_{\mathfrak{p}_i}(I)=r_0$
for all $i$. We claim that $f_i\notin J$ for all
$i$. We argue by contradiction assuming that $f_i\in J$ for some $i$.
Writing $f_i=G_i+hH_i$, where $G_i\in I_{r_0}$, $H_i\in S_{r_0-1}$,
and noticing that $h(P_j)\neq 0$ for all $j$ because $h$ is 
regular on $S/I$ (see Remark~\ref{mar17-24}), we obtain that $H_i$ is
an indicator function for 
$[P_i]$ of degree $r_0-1$, a contradiction to Lemma~\ref{if}(b). This
proves the claim. Then, dividing $f_i$ by $J$, gives a non-zero 
remainder $r_{f_i}$ which is a standard polynomial of $S/J$ of degree
$r_0$. Thus, $r_{f_i}=\lambda_i t^a$, $\lambda_i\in K^*$, and
$f_i+J=\lambda_i(t^a+J)$.
\end{proof}

\begin{proposition}\label{aug6-23} Let $\prec$ be the GRevLex order on
$S$. Suppose $t_s$ is regular on $S/I$. The following hold. {\rm (a)}
If $t^a\in\Delta_\prec(I)$, then
$t_s^\ell t^a\in\Delta_\prec(I)$ for all $\ell\geq 0$.

{\rm (b)} If $\mathcal{G}=\{g_1,\ldots,g_n\}$ is the reduced Gr\"obner
basis of $I$, then $t_s$ does not divide ${\rm
in}_\prec(g_i)$ for all $i$, and $\mathcal{G}\cup\{t_s\}$ is a Gr\"obner basis of
the ideal $J=(I,t_s)$.

{\rm (c)} $\Delta_\prec(I)\setminus\{t^c\mid t_s\in{\rm
supp}(t^c)\}=\Delta_\prec(J)$.
\end{proposition}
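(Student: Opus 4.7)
The plan is to establish part (b) first, and then derive (a) and (c) as direct consequences. The core combinatorial fact I need is the following \emph{GRevLex property}: if $f\in S$ is a nonzero homogeneous polynomial and $t_s$ divides ${\rm in}_\prec(f)$, then $t_s$ divides every term of $f$. This is immediate from the definition of GRevLex: if two monomials $t^a,t^b$ of the same total degree satisfy $t^a\succ t^b$, then the last nonzero entry of $a-b$ is negative, which forces $b_s\geq a_s$; so if the leading exponent has $a_s\geq 1$, then every other exponent $b$ of $f$ also satisfies $b_s\geq 1$.

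Given this, part (b) runs as follows. Since $I$ is graded, every element of the reduced Gr\"obner basis $\mathcal{G}$ may be taken to be homogeneous. Suppose for contradiction that $t_s\mid{\rm in}_\prec(g_i)$ for some $i$. By the GRevLex property, $t_s\mid g_i$, so we can write $g_i=t_sh$ for some homogeneous $h\in S$. Since $t_s$ is regular on $S/I$ and $g_i=t_sh\in I$, we conclude $h\in I$. Then ${\rm in}_\prec(h)\in{\rm in}_\prec(I)$ and $t_s\cdot{\rm in}_\prec(h)={\rm in}_\prec(g_i)$, so ${\rm in}_\prec(h)$ is a monomial of ${\rm in}_\prec(I)$ that strictly divides ${\rm in}_\prec(g_i)$. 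This contradicts the fact that in a reduced Gr\"obner basis the leading monomials $\{{\rm in}_\prec(g_1),\ldots,{\rm in}_\prec(g_n)\}$ form the unique minimal monomial generating set of ${\rm in}_\prec(I)$. The second assertion of (b) then follows from Buchberger's criterion: since $\gcd({\rm in}_\prec(g_i),t_s)=1$ for every $i$, the $S$-polynomial $S(g_i,t_s)$ reduces to zero, so $\mathcal{G}\cup\{t_s\}$ is a Gr\"obner basis of $J=(I,t_s)$.

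Part (a) is an immediate consequence of (b). Suppose $t^a\in\Delta_\prec(I)$ but $t_s^\ell t^a\in{\rm in}_\prec(I)$ for some $\ell\geq 0$. Then some ${\rm in}_\prec(g_i)$ divides $t_s^\ell t^a$; by (b), $t_s\notin{\rm supp}({\rm in}_\prec(g_i))$, so ${\rm in}_\prec(g_i)$ must already divide $t^a$, contradicting $t^a\in\Delta_\prec(I)$. Part (c) is equally quick: by (b), ${\rm in}_\prec(J)=({\rm in}_\prec(I),t_s)$, so a monomial $t^c$ lies in ${\rm in}_\prec(J)$ if and only if either $t^c\in{\rm in}_\prec(I)$ or $t_s\mid t^c$, and taking complements in the set of all monomials yields the claimed description of $\Delta_\prec(J)$.

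The main obstacle is the step that combines the GRevLex property with the regularity of $t_s$ on $S/I$ and the reducedness of $\mathcal{G}$ to rule out $t_s\mid{\rm in}_\prec(g_i)$; once this is established, both the Buchberger argument and parts (a) and (c) are routine bookkeeping.
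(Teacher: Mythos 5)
Your proof is correct, and it reorganizes the argument relative to the paper while relying on exactly the same two ingredients: the GRevLex property (for a homogeneous polynomial, if $t_s$ divides the leading monomial then it divides every term) and the regularity of $t_s$ on $S/I$. The paper proves (a) first by induction on $\ell$, carrying the GRevLex-plus-regularity step inside the induction, and then proves (b) by replaying the same decomposition $g_i=t_sG_i$ and reaching a contradiction via the reducedness of $\mathcal{G}$. You instead prove (b) directly and derive (a) as a one-line corollary: if some ${\rm in}_\prec(g_i)$ divides $t_s^\ell t^a$ and $t_s\notin{\rm supp}({\rm in}_\prec(g_i))$, then ${\rm in}_\prec(g_i)$ already divides $t^a$, so the induction is not needed. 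Your derivation of the contradiction in (b) -- that ${\rm in}_\prec(h)$ is a monomial of ${\rm in}_\prec(I)$ strictly dividing ${\rm in}_\prec(g_i)$, contradicting minimality of the leading monomials in a reduced Gröbner basis -- is slightly compressed but correct, and is essentially equivalent to the paper's version which factors ${\rm in}_\prec(G_i)=t^b{\rm in}_\prec(g_j)$ before invoking reducedness. You also correctly and explicitly note the homogeneity of the reduced Gröbner basis, which the paper uses implicitly in the step ``$t_s$ divides each term of $g_i$.'' Part (c) is handled the same way in both. Overall this is a valid and arguably cleaner presentation of the same proof; what it buys is the removal of the induction in (a).
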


\begin{proof} (a) Let $\mathcal{G}=\{g_1,\ldots,g_n\}$ be a Gr\"obner
basis of $I$. We proceed  by induction on $\ell\geq 0$. If
$\ell=0$, the assertion is clear. Assume $\ell\geq 1$ and 
$t_s^k t^a\in\Delta_\prec(I)$ for all $0\leq k<\ell$. We argue by
contradiction assuming that $t_s^\ell t^a\notin\Delta_\prec(I)$. 
Then, $t_s^\ell t^a\in{\rm in}_\prec(I)$ and $t_s^\ell t^a=t^c{\rm
in}_\prec(g_i)$ for some $i$ and some $t^c$. Note that $t_s$ divides 
${\rm in}_\prec(g_i)$, otherwise $t_s$ divides $t^c$. If the latter, then
$$
t_s^{\ell-1}t^a=(t^c/t_s){\rm in}_\prec(g_i)\in{\rm
in}_\prec(I),
$$
a contradiction. Thus, since $\prec$ is the GRevLex order, we get
that $t_s$ divides each term of $g_i$, that is, $t_s$ divides $g_i$.
Hence, we can write $g_i=t_sG_i$ for some $G_i\in S$. By the 
regularity of $t_s$, one has $G_i\in I$. Then
$$ 
t_s^\ell t^a=t^c{\rm in}_\prec(g_i)=t^ct_s{\rm
in}_\prec(G_i)\Rightarrow 
t_s^{\ell-1} t^a=t^c{\rm
in}_\prec(G_i)\in{\rm in}_\prec(I),
$$
a contradiction. Thus, $t_s^\ell t^a\in\Delta_\prec(I)$ and the
induction process is complete.

(b) To show that $t_s$ does not divides ${\rm in}_\prec(g_i)$ for all
$i$, we argue by contradiction assuming that $t_s$ divides ${\rm
in}_\prec(g_i)$ for some $i$. Then, by the proof of part (a),
we obtain that $g_i=t_sG_i$ for some $G_i\in I$. Thus, looking at
leading terms, 
one has
$$ 
{\rm in}_\prec(g_i)=t_s{\rm
in}_\prec(G_i)=t_s t^b{\rm in}_\prec(g_j),
$$
for some $j\neq i$ and some $t^b$, a contradiction because by the
reducedness of $\mathcal{G}$ no term of $g_i$ is divisible by ${\rm
in}_\prec(g_j)$. This proves the first part. From this, ${\rm
in}_\prec(g_i)$ is relatively prime to $t_s$ for all $i$, and by
\cite[Lemma~3.3.16]{monalg-rev} and Buchberger's criterion \cite{Buch},
it follows readily that $\mathcal{G}\cup\{t_s\}$ is a Gr\"obner basis
of the ideal $J=(I,t_s)$.

(c) By part (b), one has ${\rm in}_\prec(I)=(\{{\rm
in}_\prec(g_i)\}_{i=1}^n)$ and ${\rm in}_\prec(J)=(\{{\rm
in}_\prec(g_i)\}_{i=1}^n,t_s)$. Then, 
$t^a\in\Delta_\prec(I)\setminus\{t^c\mid t_s\in{\rm
supp}(t^c)\}$ if and only if $t^a\notin{\rm in}_\prec(I)$ and
$t_s\notin{\rm supp}(t^a)$. The latter conditions hold if and only if
$t^a\notin{\rm in}_\prec(J)$, that is, if and only if
$t^a\in\Delta_\prec(J)$.
\end{proof}

\begin{proposition}\label{gorenstein-essential} Let $\prec$ be the GRevLex order on $S$, let
$F=\{f_1,\ldots,f_m\}$ be the set of standard indicator functions 
of $\mathbb{X}$, let ${\rm lc}(f_i)$ be the leading coefficient of
$f_i$, and let $r_0$ be the regularity of $I$. Suppose $t_s$
is regular on $S/I$ and $I$ is Gorenstein. The following hold.
\begin{enumerate}
\item[(a)] There is
$t^a\in\Delta_\prec(I)_{r_0}$ such that $t_s\notin{\rm supp}(t^a)$ and
every $f_i$ has the form $f_i=\lambda_it^a+t_sG_i$, 
for some $\lambda_i\in K^*$, $G_i\in S_{r_0-1}$. In
particular, $t^a$ is essential and $\lambda_i={\rm lc}(f_i)$.
\item[(b)] If $t_s(P_i)=1$ for all $i$, then a parity check 
matrix $H$ of $C_\mathbb{X}(r_0-1)$ is the $1\times m$ matrix 
$H=({\rm lc}(f_1)f_1(P_1)^{-1},\ldots,{\rm lc}(f_m)f_m(P_m)^{-1})$. 
\end{enumerate}
\end{proposition}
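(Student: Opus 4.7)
The plan is to reduce everything to the Artinian quotient $J=(I,t_s)$, which is an Artinian reduction of $S/I$ since $t_s$ is regular. Two structural facts already in hand drive the whole argument: Proposition~\ref{aug6-23}(c) identifies $\Delta_\prec(J)$ with the subset of $\Delta_\prec(I)$ consisting of monomials not involving $t_s$; and Corollary~\ref{socle-gorenstein-coro}, applied in the Gorenstein case, yields a unique monomial $t^a$ with $\Delta_\prec(J)_{r_0}=\{t^a\}$, together with $\deg(f_i)=r_0$ and a relation $f_i+J=\lambda_i(t^a+J)$ for some $\lambda_i\in K^*$. Combining these, $t^a\in\Delta_\prec(I)_{r_0}$, $t_s\notin{\rm supp}(t^a)$, and $t^a$ is the \emph{unique} element of $\Delta_\prec(I)_{r_0}$ avoiding $t_s$.

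To prove part (a), I would compute the remainder of $f_i$ modulo the Gr\"obner basis $\mathcal{G}\cup\{t_s\}$ of $J$ furnished by Proposition~\ref{aug6-23}(b). Because $f_i\in K\Delta_\prec(I)$, no term of $f_i$ is divisible by any ${\rm in}_\prec(g_j)$, so only the $t_s$-reduction applies and it simply erases every term whose monomial contains $t_s$. By the uniqueness noted above, what remains is (coefficient of $t^a$ in $f_i$)$\cdot t^a$, and this must equal $r_{f_i}=\lambda_i t^a$. Hence the coefficient of $t^a$ in $f_i$ is exactly $\lambda_i$, so $f_i=\lambda_i t^a+t_s G_i$ for some $G_i\in S_{r_0-1}$. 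In the GRevLex order, for monomials of the same degree the last-entry rule shows that a monomial containing $t_s$ is strictly smaller than one that does not; thus ${\rm in}_\prec(f_i)=t^a$ and ${\rm lc}(f_i)=\lambda_i$. Since $\lambda_i\neq 0$ for every $i$, $t^a$ is essential.

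For part (b), Proposition~\ref{jul6-23}(b) gives $H_I(r_0-1)=m-1$, hence $\dim_K C_\mathbb{X}(r_0-1)^\perp=1$, and it suffices to exhibit a nonzero element of this dual. Set $\gamma_i:={\rm lc}(f_i)\,f_i(P_i)^{-1}$, so all entries are nonzero. Given $g\in S_{r_0-1}$, the hypothesis $t_s(P_i)=1$ gives $g(P_i)=(t_s g)(P_i)$. Let $r\in K\Delta_\prec(I)_{r_0}$ be the remainder of $t_s g$ on division by $I$, so $r(P_i)=g(P_i)$. Expanding $r=\sum_{j=1}^m c_j f_j$ in the basis $\{f_1,\ldots,f_m\}$ of $K\Delta_\prec(I)_{r_0}$ (it is a basis: linearly independent by Lemma~\ref{if}(c), and of the right size since $H_I(r_0)=m$), and evaluating at $P_j$ gives $c_j=g(P_j)/f_j(P_j)$. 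By part (a), the coefficient of $t^a$ in $r$ is $\sum_j c_j\lambda_j=\sum_j\gamma_j g(P_j)$.

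It remains to see that this coefficient vanishes. Since $t_s g\in(t_s)\subset J$, we have $r\in J$. Reducing $r$ modulo $\mathcal{G}\cup\{t_s\}$ by the same mechanism as in part (a) strips all terms divisible by $t_s$ and leaves only (coefficient of $t^a$ in $r$)$\cdot t^a$; as $t^a\notin J$, this reduction forces the coefficient to be zero. Hence $\gamma\in C_\mathbb{X}(r_0-1)^\perp$, and by the dimension count $K\gamma=C_\mathbb{X}(r_0-1)^\perp$, so the $1\times m$ matrix $H=\gamma$ is a parity check matrix of $C_\mathbb{X}(r_0-1)$. The conceptual step to get right is the clean use of the Artinian reduction by $t_s$: the Gorenstein hypothesis forces $\Delta_\prec(J)_{r_0}$ to be a singleton, which isolates the $t^a$-component of any standard polynomial of degree $r_0$ from the $t_s$-divisible part; everything else is bookkeeping via the GRevLex order and the standard-basis expansion.
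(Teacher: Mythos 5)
Your proposal is correct and follows essentially the same route as the paper: set $J=(I,t_s)$, invoke Proposition~\ref{aug6-23} and Corollary~\ref{socle-gorenstein-coro} to isolate the unique monomial $t^a$ of $\Delta_\prec(J)_{r_0}$, deduce the decomposition $f_i=\lambda_i t^a+t_sG_i$, and in (b) expand the standard representative of $t_sg$ in the basis $\{f_i\}$ and compare $t^a$-coefficients. Your slight reformulation of (b) (arguing $r\in J$ and using the $J$-remainder instead of setting $t_s=0$ as the paper does) is equivalent, and your explicit GRevLex observation that $t^a$ must be the leading term cleanly justifies the identity $\lambda_i={\rm lc}(f_i)$ claimed in the statement.
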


\begin{proof} (a) We set $J=(I,t_s)$. 
By Corollary~\ref{socle-gorenstein-coro}, one has
$\Delta_\prec(J)_{r_0}=\{t^a\}$ and for each $i$, $\deg(f_i)$ is equal
to $r_0$ and
$f_i\notin J$. Note that $t_s\notin{\rm supp}(t^a)$ because $t^a$ is a
standard monomial of $S/J$. By Proposition~\ref{aug6-23}(c), any
monomial $t^c$ that appears in $f_i$ and does not contains $t_s$ in
its support is equal to $t^a$. Then, as $f_i\notin J$, we can write
$f_i=\lambda_it^a+t_sG_i$, where $G_i\in S_{r_0-1}$ and 
$\lambda_i\in K^*$.

(b)  To prove that $H$ is a parity check matrix of
$C_\mathbb{X}(r_0-1)$ we show the equality
\begin{equation}\label{sep19-23}
C_\mathbb{X}(r_0-1)=\{x\in K^m\mid Hx=0\}.
\end{equation}
\quad By Proposition~\ref{jul6-23}(b), $\dim_K(C_\mathbb{X}(r_0-1))=H_I(r_0-1)=m-1$. Hence, to show the
equality of Eq.~\eqref{sep19-23}, it suffices to show the inclusion ``$\subset$''. Take $x\in
C_\mathbb{X}(r_0-1)$, that is, $x={\rm ev}_{r_0-1}(f)$ for some $f\in
S_{r_0-1}$. By considering the remainder on division of $f$ by $I$,
we may assume that 
$f\in K\Delta_\prec(I)_{r_0-1}$. Then, by Proposition~\ref{aug6-23}(a)
and recalling that $\{f_i\}_{i=1}^m$ is a $K$-basis of $K\Delta_\prec(I)_{r_0}$,
one has $t_sf\in K\Delta_\prec(I)_{r_0}$ and there are $\mu_1,\ldots,\mu_m$ in
$K$ such
that
\begin{align}
t_sf&=\mu_1f_1+\cdots+\mu_mf_m\label{mar7-24}\\
&=\mu_1(\lambda_1t^a+t_sG_1)+\cdots+\mu_m(\lambda_mt^a+t_sG_m)\nonumber\\
&=(\mu_1\lambda_1+\cdots+\mu_m\lambda_m)t^a+t_s(\mu_1G_1+\cdots+\mu_mG_m).
\nonumber
\end{align}  
\quad Thus, making $t_s=0$ and recalling $t_s\notin{\rm supp}(t^a)$, we get
$\mu_1\lambda_1+\cdots+\mu_m\lambda_m=0$. Then
\begin{align*}
\langle x,H\rangle&\ =\ \langle{\rm ev}_{r_0-1}(f),H\rangle=\langle{\rm
ev}_{r_0}(t_sf),H\rangle=\langle ((t_sf)(P_1),\ldots,(t_sf)(P_m)),H\rangle\\
&\stackrel{\eqref{mar7-24}}{=}\langle(\mu_1f_1(P_1),\ldots,\mu_mf_m(P_m)),H\rangle=
\mu_1\lambda_1+\cdots+\mu_m\lambda_m=0.
\end{align*}  
\quad Hence, $\langle x,H\rangle=0$, that is, $Hx=0$ and the proof of the
inclusion ``$\subset$'' is
complete.
\end{proof}

\begin{corollary}\label{combinatorial-condition-projective} 
Let $\prec$ be the GRevLex order. Let $\mathbb{X}$ be a subset of
$\mathbb{P}^{s-1}$ and let 
$I$ be its vanishing ideal. Suppose
$t_s(P_i)=1$ for all $i$ and $I$ is Gorenstein. Then, there is a
unique essential monomial $t^a$ such that $t_s\notin{\rm supp}(t^a)$
and for any $\Gamma_1\subset\Delta_\prec(I)_d$,
$\Gamma_2\subset \Delta_\prec(I)_k$
satisfying  
\begin{enumerate}
\item $d+k\leq {\rm reg}(S/I)$,
\item $|\Gamma_1|+|\Gamma_2|=|\mathbb{X}|$,
\item $t^a$ does not appear in the remainder on division of $u_1u_2$
by $I$
for every $u_1\in\Gamma_1$, $u_2\in\Gamma_2$,
\end{enumerate}
we have 
$\gamma\cdot{\rm ev}_d(K\Gamma_1)={\rm
ev}_k(K\Gamma_2)^\perp$,
where $\gamma=(\beta_1,\dots,
\beta_m)\cdot(f_1(P_1)^{-1},\ldots,f_m(P_m)^{-1})$ and $\beta_i$ is the
coefficient of $t^a$ in the $i$-th standard indicator
function~$f_i$ of $\mathbb{X}$. 
\end{corollary}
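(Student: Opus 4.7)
The plan is to reduce Corollary~\ref{combinatorial-condition-projective} to the case $d+k={\rm reg}(S/I)$ treated by Theorem~\ref{combinatorial-condition}, by trading the slack $r_0-d-k$ for a power of $t_s$, which is invisible on the evaluation side because $t_s(P_i)=1$ for all $i$.

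First I would establish the existence and uniqueness of the essential monomial $t^a$ with $t_s\notin{\rm supp}(t^a)$. Since $t_s(P_i)=1\neq 0$ for every $i$, Remark~\ref{mar17-24} gives that $t_s$ is regular on $S/I$, so Proposition~\ref{gorenstein-essential}(a) applies and produces a standard monomial $t^a\in\Delta_\prec(I)_{r_0}$ with $t_s\notin{\rm supp}(t^a)$ which appears with nonzero coefficient $\lambda_i={\rm lc}(f_i)$ in every $f_i$; in particular $t^a$ is essential. Uniqueness follows from Corollary~\ref{socle-gorenstein-coro}, which gives $\Delta_\prec(J)_{r_0}=\{t^a\}$ for $J=(I,t_s)$, combined with Proposition~\ref{aug6-23}(c): any standard monomial of degree $r_0$ without $t_s$ in its support is a standard monomial of $S/J$ of degree $r_0$, hence equals $t^a$.

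Next I would handle the case $d+k=r_0$, which is immediate from Theorem~\ref{combinatorial-condition}: conditions (1)--(3) of the corollary coincide with conditions (1)--(3) of the theorem, so we conclude $\gamma\cdot{\rm ev}_d(K\Gamma_1)={\rm ev}_k(K\Gamma_2)^\perp$ with the stated $\gamma$. For the general case $d+k\leq r_0$, set $e=r_0-d-k\geq 0$ and $k'=k+e=r_0-d$, and define
\[
\Gamma_2':=\{t_s^{e}u_2\mid u_2\in\Gamma_2\}.
\]
By Proposition~\ref{aug6-23}(a), $\Gamma_2'\subset\Delta_\prec(I)_{k'}$, and the map $u_2\mapsto t_s^{e}u_2$ is injective, so $|\Gamma_2'|=|\Gamma_2|$, giving $|\Gamma_1|+|\Gamma_2'|=|\mathbb{X}|$ and $d+k'=r_0$. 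Because $t_s(P_i)=1$ for every $i$, one has ${\rm ev}_{k'}(t_s^{e}u_2)={\rm ev}_k(u_2)$, hence ${\rm ev}_{k'}(K\Gamma_2')={\rm ev}_k(K\Gamma_2)$.

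The only step requiring care is transporting condition (3) from $\Gamma_2$ to $\Gamma_2'$: take $u_1\in\Gamma_1$, $u_2'=t_s^e u_2\in\Gamma_2'$, and let $r$ be the remainder of $u_1u_2$ on division by $I$, so that $u_1u_2=g+r$ with $g\in I$ and $r\in K\Delta_\prec(I)_{d+k}$. Then $u_1u_2'=t_s^e g+t_s^e r$ with $t_s^eg\in I$, and by Proposition~\ref{aug6-23}(a) every monomial of $t_s^e r$ is standard, so $t_s^e r$ is the remainder of $u_1u_2'$ by $I$. If $e\geq 1$ every monomial of $t_s^e r$ has $t_s$ in its support, whereas $t_s\notin{\rm supp}(t^a)$, so $t^a$ cannot appear in $t_s^e r$; if $e=0$ the hypothesis on $u_1u_2$ is exactly the hypothesis on $u_1u_2'$. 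Thus $\Gamma_1,\Gamma_2'$ satisfy the three hypotheses of Theorem~\ref{combinatorial-condition} at regularity level, yielding $\gamma\cdot{\rm ev}_d(K\Gamma_1)={\rm ev}_{k'}(K\Gamma_2')^\perp={\rm ev}_k(K\Gamma_2)^\perp$, which is the desired conclusion. The only delicate point is the compatibility of condition (3) under multiplication by $t_s^e$, and that is precisely where the property $t_s\notin{\rm supp}(t^a)$ from Proposition~\ref{gorenstein-essential} is used in an essential way.
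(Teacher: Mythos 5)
Your proof is correct and follows essentially the same strategy as the paper: establish the unique essential monomial $t^a$ with $t_s\notin{\rm supp}(t^a)$ from Proposition~\ref{gorenstein-essential} and Corollary~\ref{socle-gorenstein-coro}, then reduce the case $d+k<r_0$ to Theorem~\ref{combinatorial-condition} by multiplying one of the $\Gamma_i$ by $t_s^{r_0-d-k}$ and using that $t_s(P_i)=1$ renders this invisible under evaluation, with condition (3) transported via Proposition~\ref{aug6-23}(a) and the uniqueness of remainders. The only (immaterial) cosmetic difference is that you apply the power of $t_s$ to $\Gamma_2$ while the paper applies it to $\Gamma_1$.
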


\begin{proof}  Let $r_0$ be the regularity of $S/I$. 
As $I$ is Gorenstein, by
Corollary~\ref{socle-gorenstein-coro} 
and Proposition~\ref{gorenstein-essential}, there exists a unique
$t^a\in\Delta_\prec(I)_{r_0}$ such that $t^a$ is essential and
$t_s\notin{\rm supp}(t^a)$. If $d+k=r_0$, the result follows 
from Theorem~\ref{combinatorial-condition}. Assume that $d+k<r_0$. We 
set $\epsilon=r_0-d-k$ and $t_s^\epsilon\Gamma_1=\{t_s^\epsilon u\mid
u\in\Gamma_1\}$. By Proposition~\ref{aug6-23},
$t_s^\epsilon\Gamma_1\subset\Delta_\prec(I)_{r_0-k}$. Take any $t_s^\epsilon u_1\in t_s^\epsilon\Gamma_1$,
$u_2\in\Gamma_2$. Then, 
$$
u_1u_2=h+r_{u_1u_2},
$$
where $h\in I_{d+k}$ and $r_{u_1u_2}\in K\Delta_\prec(I)_{d+k}$ is the remainder on division of
$u_1u_2$ by $I$. Then,
$$
t_s^\epsilon u_1u_2=t_s^\epsilon h+t_s^\epsilon r_{u_1u_2}
$$ 
and, since $t_s\notin I$, $t_s$ is regular
on $S/I$, by Proposition~\ref{aug6-23}(a),
$t_s^{\epsilon}r_{u_1u_2}\in K\Delta_\prec(I)_{r_0}$. Note that 
$\deg(t^a) = r_0 > d + k =
\deg(r_{u_1u_2})$, so $t^a$ cannot appear in $r_{u_1u_2}$. Thus, by the
uniqueness of the remainder \cite[p.~81]{CLO}, one has 
$r_{t_s^\epsilon u_1u_2}=t_s^{\epsilon}r_{u_1u_2}$, and $t^a$ does not
appear in $r_{t_s^\epsilon u_1u_2}$ because $\epsilon\geq 1$. Hence, applying
Theorem~\ref{combinatorial-condition} to $t_s^\epsilon\Gamma_1$ and
$\Gamma_2$, we obtain
\begin{equation}\label{sep18-23-1}
\gamma\cdot{\rm ev}_{r_0-k}(K(t_s^\epsilon\Gamma_1))={\rm
ev}_k(K\Gamma_2)^\perp,
\end{equation}
where $\beta_i$ is the
coefficient of $t^a$ in the $i$-th standard indicator
function~$f_i$ of $\mathbb{X}$. Since we are assuming that $t_s(P_i)=1$ for $i=1,\ldots,m$,
one has
$$
{\rm ev}_{r_0-k}(K(t_s^\epsilon\Gamma_1))={\rm
ev}_d(K\Gamma_1),
$$
and by Eq.~\eqref{sep18-23-1} the proof is complete.
\end{proof}

The following theorem was shown by Kreuzer \cite[Corollary~2.5(b)]{Kreuzer} 
when $K$ is algebraically closed. We show how to 
use Kreuzer's result to prove the theorem for finite fields.  

\begin{theorem}\label{conjecture-gor} The vanishing ideal $I$ of
$\mathbb{X}$ is
Gorenstein if and only if
\begin{equation}\label{apr1-24}
H_I(d)+H_I(r_0-d-1)= |\mathbb{X}|\mbox{ for all }0\leq d\leq
r_0\mbox{ and }r_0={\rm v}_{\mathfrak{p}}(I)\mbox{ for all } \mathfrak{p}\in{\rm Ass}(I).
\end{equation}
\end{theorem}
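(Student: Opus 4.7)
The plan is to reduce the theorem to Kreuzer's result \cite[Corollary~2.5(b)]{Kreuzer}, which establishes the equivalence for algebraically closed fields, by base-changing to the algebraic closure $\overline{K}$ of $K$ and showing that all the invariants that appear in \eqref{apr1-24} are preserved under this extension. Set $\overline{S}=\overline{K}[t_1,\ldots,t_s]$ and $\overline{I}=I\overline{S}$, and recall from Remark~\ref{gor-iff-ext} that $I$ is Gorenstein if and only if $\overline{I}$ is Gorenstein. Thus it suffices to show that the two conditions in \eqref{apr1-24} for $I$ are equivalent to the same two conditions for $\overline{I}$, whereupon Kreuzer's theorem closes the argument.

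First, I would handle the Hilbert-function condition. Base change gives $H_I(d)=H_{\overline{I}}(d)$ for all $d$ (cf.\ the proof of Lemma~\ref{h-vector-H}), and in particular the regularity indices of $H_I$ and $H_{\overline{I}}$ coincide; by Lemma~\ref{aug1-23} together with the identification $r_0={\rm reg}(H_I)$, this common value is $r_0$ for both. Next, since each $\mathfrak{p}_i$ in Lemma~\ref{primdec-ix-a} is generated by linear forms with coefficients in $K$, the extension $\mathfrak{p}_i\overline{S}$ remains prime, and $\overline{I}=\bigcap_{i=1}^m\mathfrak{p}_i\overline{S}$ is the primary decomposition of $\overline{I}$; therefore $\overline{I}$ is the vanishing ideal of the set $\overline{\mathbb{X}}=\{[P_1],\ldots,[P_m]\}$ viewed in $\mathbb{P}^{s-1}(\overline{K})$, and $|\overline{\mathbb{X}}|=\deg(\overline{S}/\overline{I})=\deg(S/I)=|\mathbb{X}|$. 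Consequently the identity $H_I(d)+H_I(r_0-d-1)=|\mathbb{X}|$ holds for all $0\le d\le r_0$ if and only if the analogous identity holds for $\overline{I}$.

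Second, I would show that the v-number condition is preserved. The associated primes of $\overline{I}$ are exactly $\mathfrak{p}_1\overline{S},\ldots,\mathfrak{p}_m\overline{S}$, and by flatness of the extension $S\hookrightarrow\overline{S}$ together with the finite generation of $\mathfrak{p}_i$,
\[
(\overline{I}\colon\mathfrak{p}_i\overline{S})/\overline{I}\;\cong\;\bigl((I\colon\mathfrak{p}_i)/I\bigr)\otimes_K\overline{K}
\]
as graded $\overline{K}$-vector spaces. In particular, their Hilbert functions agree, so the minimum degrees of non-zero elements coincide, giving
\[
{\rm v}_{\mathfrak{p}_i}(I)=\alpha\!\left((I\colon\mathfrak{p}_i)/I\right)=\alpha\!\left((\overline{I}\colon\mathfrak{p}_i\overline{S})/\overline{I}\right)={\rm v}_{\mathfrak{p}_i\overline{S}}(\overline{I})
\]
for every $i$. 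Hence ``$r_0={\rm v}_{\mathfrak{p}}(I)$ for all $\mathfrak{p}\in{\rm Ass}(I)$'' is equivalent to the corresponding statement for $\overline{I}$.

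Combining the two reductions, \eqref{apr1-24} holds for $I$ if and only if it holds for $\overline{I}$; by Kreuzer's result applied over the algebraically closed field $\overline{K}$, the latter is equivalent to $\overline{I}$ being Gorenstein, and by Remark~\ref{gor-iff-ext} this is equivalent to $I$ being Gorenstein. The main obstacle is ensuring the precise correspondence between Kreuzer's formulation (which is typically phrased in terms of the Cayley--Bacharach property together with symmetry of the h-vector) and the formulation in \eqref{apr1-24}; the CB property is $r_0={\rm v}_{\mathfrak{p}}(\overline{I})$ for all $\mathfrak{p}\in{\rm Ass}(\overline{I})$ by \cite[Definition~2.7]{geramita-cayley-bacharach}, and the symmetry of the h-vector of $\overline{S}/\overline{I}$ is equivalent to the Hilbert-function identity via Proposition~\ref{jul5-23}, so the translation is immediate.
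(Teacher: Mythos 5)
Your proof is correct, and it follows the same high-level strategy as the paper's: base-change to the algebraic closure $\overline{K}$, verify that both conditions in \eqref{apr1-24} are invariant under the extension, and then invoke Kreuzer's theorem over $\overline{K}$. Where you diverge is in the treatment of the v-number condition, and your version is in fact cleaner and more systematic. The paper's proof does not establish the full equality ${\rm v}_{\mathfrak{p}_i}(I)={\rm v}_{\mathfrak{p}_i\overline{S}}(\overline{I})$; instead it takes the $i$-th standard indicator function $g_i$ over $\overline{S}$, writes it as a combination of homogeneous generators $h_1,\ldots,h_k$ of $(I\colon\mathfrak{p}_i)$ with coefficients in $\overline{S}$, picks one $h_j\notin I$ with $\deg(h_j)\leq\deg(g_i)$, and then squeezes $r_0={\rm v}_{\mathfrak{p}_i}(I)\leq\deg(h_j)\leq{\rm v}_{\overline{\mathfrak{p}}_i}(\overline{I})\leq r_0$ using the hypothesis and Lemma~\ref{if}. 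Your argument, by contrast, proves the unconditional equality ${\rm v}_{\mathfrak{p}_i}(I)={\rm v}_{\mathfrak{p}_i\overline{S}}(\overline{I})$ via the flat-base-change isomorphism $(\overline{I}\colon\mathfrak{p}_i\overline{S})/\overline{I}\cong\bigl((I\colon\mathfrak{p}_i)/I\bigr)\otimes_K\overline{K}$ (valid because $S\hookrightarrow\overline{S}$ is flat and $\mathfrak{p}_i$ is finitely generated) together with the $\alpha$-formula for the local v-number. This is a genuine improvement since the statement is stronger and does not rely on the hypothesis of \eqref{apr1-24}. You also uniformize the two directions by reducing both to Kreuzer's biconditional, whereas the paper proves the forward implication directly from Proposition~\ref{jul6-23} (using that Gorenstein implies level and has a symmetric h-vector) and reserves Kreuzer only for the converse; the paper's choice makes the forward direction more self-contained, but your route is logically equally valid given that Kreuzer's statement is itself an equivalence.
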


\begin{proof} 
$\Rightarrow$) Assume that $I$ is Gorenstein. Then, $I$ is level and,
by Proposition~\ref{jul6-23}, the two conditions of
Eq.~\eqref{apr1-24} hold. 

$\Leftarrow$) Assume that the two conditions of
Eq.~\eqref{apr1-24} hold. Let $\overline{K}$ be the algebraic closure
of $K$ \cite[p.~11]{AM}.
We set
$\overline{S}:=S\otimes_K\overline{K}=\overline{K}[t_1,\ldots,t_s]$
and $\overline{I}:=I\overline{S}$. Note that $K\hookrightarrow
\overline{K}$ is a faithfully flat extension.  
Apply the functor $S\otimes_{K}(-)$. By base change, it follows that
$S\hookrightarrow \overline{S}$ is a faithfully flat extension. 
The ideal theory of $S$ is related to that of $\overline{S}$, 
see \cite[Sections~3.H, 5.D and 9.C]{Mat}. 
From \cite[Lemma~1.1]{Sta1}, 
$H_I(d)=H_{\overline{I}}(d)$ for all $d\geq 0$ and $I$ is Gorenstein
if and only if $\overline{I}$ is Gorenstein. In particular, $r_0={\rm
reg}(S/I)={\rm reg}(\overline{S}/\overline{I})$ because ${\rm
reg}(S/I)$ is the regularity index of $H_I$
\cite[p.~256]{hilbert-min-dis}, and one has  
\begin{equation}\label{feb29-24}
H_{\overline{I}}(d)+H_{\overline{I}}(r_0-d-1)= |\mathbb{X}|\mbox{ for all }0\leq d\leq
r_0={\rm reg}(\overline{S}/\overline{I}).
\end{equation}
\quad Let ${\mathfrak{p}}_i$ be the vanishing ideal of
$[P_i]$ for $i=1,\ldots,m$. The prime ideal
$\overline{\mathfrak{p}}_i:={\mathfrak{p}}_i\overline{S}$ is the
vanishing ideal of $[P_i]$ in $\overline{S}$ (Lemma~\ref{primdec-ix-a}).
The
minimal primary decomposition of $\overline{I}$ is 
$$
\overline{I}=I\overline{S}=\Bigg(\bigcap_{i=1}^m\mathfrak{p}_i\Bigg)\overline{S}=
\bigcap_{i=1}^m\big(\mathfrak{p}_i\overline{S}\big)=
\bigcap_{i=1}^m\overline{\mathfrak{p}}_i,
$$
and $\overline{I}\cap S=I$. Thus, $\overline{I}$ is the 
vanishing ideal of $\mathbb{X}$ in $\overline{S}$. By 
Lemma~\ref{if}, ${\rm
v}_{\overline{\mathfrak{p}}_i}(\overline{I})\leq{\rm
reg}(\overline{S}/\overline{I})=r_0$ for all $i$. Let
$g_i\in\overline{S}$ be the
$i$-th standard indicator function for $[P_i]$. Then, by Lemma~\ref{if}, 
$g_i\in(\overline{I}\colon\overline{\mathfrak{p}}_i)\setminus\overline{I}$.
Let $\{h_1,\ldots,h_k\}$ be a generating set of 
$(I\colon\mathfrak{p}_i)\subset S$ consisting of homogeneous
polynomials of $S$. Since
$(I\colon\mathfrak{p}_i)\overline{S}=(I\overline{S}
\colon\mathfrak{p}_i\overline{S})$, we can write 
$g_i=\sum_{i=1}^kh_ia_i$, where $a_j$ is a homogeneous polynomial in 
$\overline{S}$ for all $j$ and $\deg(g_i)=\deg(h_ja_j)$ if $h_ja_j\neq
0$. 
Note that $h_j\notin I$ for some $1\leq
j\leq k$ such that $h_ja_j\neq 0$, otherwise $g_i\in
I\overline{S}=\overline{I}$,
a contradiction because $g_i(P_i)\neq 0$. Thus $h_j\in (I\colon\mathfrak{p}_i)\setminus I$
and, by 
Lemma~\ref{if}(a)-(b), $h_j$ is an indicator function for $[P_i]$ and ${\rm
v}_{\mathfrak{p}_i}(I)\leq\deg(h_j)$. Therefore, one has 
\begin{equation}\label{feb29-24-1}
r_0={\rm v}_{\mathfrak{p}_i}(I)\leq\deg(h_j)\leq\deg(g_i)={\rm
v}_{\overline{\mathfrak{p}}_i}(\overline{I})\leq r_0={\rm
reg}(\overline{S}/\overline{I}),
\end{equation}
and we have equality everywhere. As $\overline{K}$ is algebraically 
closed, using Eqs.~\eqref{feb29-24}--\eqref{feb29-24-1}, we obtain that 
the triplet ($\mathbb{X}$, $\overline{S}$, $\overline{I})$ satisfies 
the two conditions of 
\cite[Corollary~2.5(b)]{Kreuzer} (cf.~\cite[Theorem~5]{Davis-etal}), 
and consequently $\overline{I}$ is
Gorenstein, and so is $I$. 
\end{proof}

\section{Affine duality criterion}\label{section-affine}
In this section we show how to recover the global duality
criterion of L\'opez, Soprunov and Villarreal \cite{dual} for affine
Reed--Muller-type codes using our projective global duality criterion. 

Let $X=\{Q_1,\ldots,Q_m\}$ be a subset of the affine space
$\mathbb{A}^s=K^s$, $m=|X|\geq 2$, and let $I(X)$ be its
vanishing ideal. Given an integer
$d\geq 0$, we let $S_{\leq 
d}=\bigoplus_{i=0}^dS_i$ be the $K$-linear subspace of $S$ of all 
polynomials of degree at most $d$ and let $I(X)_{\leq d}=I(X)\bigcap S_{\leq d}$.  
The function
$$
H_X^a(d):=\dim_K(S_{\leq d}/I(X)_{\leq d}),\ \ \ d=0,1,2,\ldots
$$
is  called the \textit{affine Hilbert function} of $S/I(X)$. 
Let $r_0={\rm reg}(H_X^a)$ be the regularity index of $H_X(d)$, that
is, $r_0$ is the 
least integer $\ell\geq 0$ such that $H_X^a(d)=|X|$ for $d\geq\ell$. 

To make the connection between affine and projective Reed--Muller-type
codes \cite{cartesian-codes,affine-codes}, we consider the GRevLex 
order $\prec$ on $S[u]$, where $u$
is a new variable such that $t_1\succ\cdots\succ t_s\succ u$. 
\quad Let $\mathcal{G}=\{g_1,\ldots,g_p\}$ be a Gr\"obner basis of
$I(X)$, let $g_i^h$ be the homogenization of $g_i$ with respect to
$u$, and let $I(X)^h$ be the homogenization of $I(X)$. 
As $K$ is a finite field, the projective closure 
$Y$ of $X$ is given by \cite[p.~133]{monalg-rev}:
$$  
Y=[X,1]=\{[(Q_1,1)],\ldots,[(Q_m,1)]\}\subset\mathbb{P}^s.
$$ 

\begin{proposition}\label{affine-projective} 
{\rm(i)} \cite[Lemma~3.7]{affine-codes}  
$I(Y)=I(X)^h=(g_1^h,\ldots,g_p^h)$ and $\{g_1^h,\ldots,g_p^h\}$ is a
Gr\"obner basis of $I(Y)$. 

{\rm(ii)} \cite[Lemma~2.8, Proposition~2.9]{cartesian-codes} 
$H_X^a(d)=H_Y(d)\mbox{ and }C_X(d)=C_Y(d)$ for $d\geq 0$.
\end{proposition}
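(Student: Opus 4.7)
The plan is to handle the two parts separately, using the GRevLex order on $S[u]$ with $u$ as the smallest variable. For part (i), I would first check that homogenization preserves initial monomials: writing $g_i = \sum_\alpha c_\alpha t^\alpha$ with $\deg(g_i) = d_i$, we have $g_i^h = \sum_\alpha c_\alpha u^{d_i - |\alpha|} t^\alpha$, and under GRevLex with $u$ last, the monomial $u^0 t^\alpha$ of degree $d_i$ beats $u^{d_i - |\beta|} t^\beta$ whenever $|\beta| < d_i$, because the exponent difference has its last nonzero entry in the $u$-slot and is negative. Hence ${\rm in}_\prec(g_i^h) = {\rm in}_\prec(g_i)$ for every $i$.

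Next I would prove $I(Y) = I(X)^h$ directly. The inclusion $\supseteq$ follows from $f^h(Q_i,1) = f(Q_i)$, so $f^h \in I(Y)$ whenever $f \in I(X)$. For $\subseteq$, given a homogeneous $F \in I(Y)_d$, set $f := F(t_1,\ldots,t_s,1)$; then $f(Q_i) = F(Q_i,1) = 0$, so $f \in I(X)_{\leq d}$, and writing $F = \sum_\alpha c_\alpha u^{d - |\alpha|} t^\alpha$ one checks $F = u^{d - \deg(f)}\, f^h$, which lies in $I(X)^h$. To conclude that $\{g_1^h,\ldots,g_p^h\}$ is a Gr\"obner basis of $I(Y)$, I would apply Buchberger's criterion: $S$-polynomials among the $g_i^h$ are themselves homogeneous lifts of $S$-polynomials among the $g_i$ (again thanks to the GRevLex-with-$u$-smallest choice), so their standard reductions with respect to $\{g_j^h\}$ inherit the reductions-to-zero that hold in $S$ for $\{g_j\}$.

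For part (ii), I would use the $K$-linear homogenization map $\phi_d\colon S_{\leq d}\to S[u]_d$, $f\mapsto u^d f(t_1/u,\ldots,t_s/u)$. This map is injective, since dehomogenization at $u=1$ recovers $f$, and both source and target have dimension $\binom{d+s}{s}$, so $\phi_d$ is a $K$-linear isomorphism. By part (i), $\phi_d$ sends $I(X)_{\leq d}$ into $I(Y)_d$, and conversely every $F\in I(Y)_d$ equals $\phi_d(F|_{u=1})$ with $F|_{u=1}\in I(X)_{\leq d}$; hence $\phi_d$ restricts to an isomorphism $I(X)_{\leq d}\cong I(Y)_d$ and descends to the quotients, giving $H_X^a(d) = H_Y(d)$. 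Finally, the identity $\phi_d(f)(Q_i,1) = f(Q_i)$ identifies the images of the affine and projective evaluation maps, yielding $C_X(d) = C_Y(d)$.

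The main obstacle is the Gr\"obner basis statement in (i): while the equality of ideals is a short dehomogenization calculation, the preservation of reductions under homogenization genuinely exploits the special feature of GRevLex with $u$ last, and one must carefully track the homogenization of $S$-polynomials (or alternatively argue by comparing Hilbert series, using the initial-ideal identity above to show that $(g_1^h,\ldots,g_p^h)$ and $I(Y)$ have the same Hilbert function and hence coincide). The rest of the argument is a routine dimension count and evaluation computation.
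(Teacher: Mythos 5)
The paper states this proposition as a citation to \cite[Lemma~3.7]{affine-codes} and \cite[Lemma~2.8, Proposition~2.9]{cartesian-codes} and gives no proof of its own, so there is no internal argument to compare your proposal against. Your reconstruction is correct and is essentially the standard argument from those references: $I(Y)=I(X)^h$ by the dehomogenization trick $F=u^{d-\deg f}f^h$ with $f=F|_{u=1}$; the Gr\"obner basis claim exploits that GRevLex with $u$ smallest satisfies $\mathrm{in}_\prec(g^h)=\mathrm{in}_\prec(g)$; and the homogenization map $\phi_d\colon S_{\leq d}\to S[u]_d$ transports both the Hilbert function and the evaluation code.

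One small remark on your sketch of the Gr\"obner basis step. The cleanest route is neither Buchberger nor a Hilbert-function comparison (the latter, as you set it up, risks circularity since it presupposes control over $\mathrm{in}_\prec$ of the ideal generated by the $g_i^h$): rather, one shows directly that $\mathrm{in}_\prec(I(X)^h)=\mathrm{in}_\prec(I(X))\,S[u]$. For any nonzero homogeneous $F\in I(X)^h$ write $F=u^a f^h$ with $f=F|_{u=1}\in I(X)$ (this uses that $I(X)^h$ is the ideal generated by all $g^h$, $g\in I(X)$, and that dehomogenization at $u=1$ maps $I(X)^h$ into $I(X)$). Then $\mathrm{in}_\prec(F)=u^a\,\mathrm{in}_\prec(f^h)=u^a\,\mathrm{in}_\prec(f)\in\mathrm{in}_\prec(I(X))S[u]$, which together with the reverse inclusion coming from $\mathrm{in}_\prec(g_i^h)=\mathrm{in}_\prec(g_i)$ pins down $\mathrm{in}_\prec(I(X)^h)=(\mathrm{in}_\prec(g_1^h),\dots,\mathrm{in}_\prec(g_p^h))$ and hence the Gr\"obner basis claim. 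If you do prefer the Hilbert-function route, you must first establish $H_X^a(d)=H_Y(d)$ using only $I(Y)=I(X)^h$ (which your $\phi_d$ argument does), and then count standard monomials of $(\mathrm{in}_\prec(g_i^h))$ via Macaulay's theorem; that also works but takes more bookkeeping.

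Everything else, including the linear isomorphism $\phi_d$ and the identification of evaluation images through $\phi_d(f)(Q_i,1)=f(Q_i)$, is exactly what is needed, and your dimension count $\dim_K S_{\leq d}=\dim_K S[u]_d=\binom{d+s}{s}$ is correct.
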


\begin{lemma}\label{jul14-23} {\rm(i)} If $H_i$ $($resp. $G_i$$)$ 
is an indicator function of $Q_i$ $($resp. $[(Q_i,1)]$$)$,  
then $H_i^h$ $($resp. $L_i:=G_i(t_1,\ldots,t_s,1)$$)$ is an indicator
function of 
$[(Q_i,1)]$ $($resp. $Q_i$$)$. 

{\rm(ii)} Let $\mathfrak{q}_i$ be the defining
ideal of $Q_i$, and let $\mathfrak{p}_i$ be the defining ideal of
$[(Q_i,1)]$. Then, 
${\rm v}_{\mathfrak{q}_i}(I(X))={\rm
v}_{\mathfrak{p}_i}(I(Y))$. 

{\rm(iii)} $r_0={\rm reg}(H_X^a)={\rm reg}(H_Y)={\rm reg}(S[u]/I(Y))$.

{\rm(iv)} The following conditions are equivalent.

\begin{enumerate} 
\item $H_X^a(d)+H_X^a(r_0-d-1)=|X|$ for $0\leq d\leq r_0$ 
and $r_0={\rm v}_{\mathfrak{q}_i}(I(X))$ for $i=1,\ldots,m$. 
\item $H_Y(d)+H_Y(r_0-d-1)=|Y|$ for $0\leq d\leq r_0$ 
and $r_0={\rm v}_{\mathfrak{p}_i}(I(Y))$ for $i=1,\ldots,m$.
\end{enumerate} 
\end{lemma}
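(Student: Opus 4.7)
The plan is a direct translation between affine polynomials and their homogenizations, leveraging Proposition~\ref{affine-projective}. Part~(i) is essentially a tautology: if $H_i\in S$ satisfies $H_i(Q_i)\neq 0$ and $H_i(Q_j)=0$ for $j\neq i$, then its homogenization $H_i^h\in S[u]$ with respect to $u$ is homogeneous by construction, and the evaluation identity $H_i^h(Q_j,1)=H_i(Q_j)$ for every $j$ shows that $H_i^h$ is an indicator of $[(Q_j,1)]$. Conversely, if $G_i\in S[u]$ is a homogeneous indicator of $[(Q_i,1)]$, then $L_i:=G_i(t_1,\ldots,t_s,1)\in S$ satisfies $L_i(Q_j)=G_i(Q_j,1)$ for every $j$, whence $L_i$ is an (affine) indicator of $Q_i$.

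For part~(ii), combine~(i) with the degree identities $\deg(H_i^h)=\deg(H_i)$ and $\deg(L_i)\leq\deg(G_i)$. Choosing a minimum-degree affine indicator $H_i$ of $Q_i$ and homogenizing yields a projective indicator of $[(Q_i,1)]$ of the same degree, giving
\[
{\rm v}_{\mathfrak{p}_i}(I(Y))\leq\deg(H_i^h)=\deg(H_i)={\rm v}_{\mathfrak{q}_i}(I(X)).
\]
Conversely, choosing a minimum-degree (homogeneous) projective indicator $G_i$ of $[(Q_i,1)]$ and dehomogenizing yields an affine indicator $L_i$ of $Q_i$ with $\deg(L_i)\leq\deg(G_i)$, whence ${\rm v}_{\mathfrak{q}_i}(I(X))\leq{\rm v}_{\mathfrak{p}_i}(I(Y))$; equality follows. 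Note that we do not need $\deg(L_i)=\deg(G_i)$, only the inequality, which is automatic after dehomogenization.

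Part~(iii) is immediate from Proposition~\ref{affine-projective}(ii), which gives $H_X^a(d)=H_Y(d)$ for all $d\geq 0$, so ${\rm reg}(H_X^a)={\rm reg}(H_Y)$ by definition of the regularity index. Moreover, $S[u]/I(Y)$ is Cohen--Macaulay of Krull dimension $1$ by Lemma~\ref{aug1-23}, so its Castelnuovo--Mumford regularity coincides with the regularity index of its Hilbert function (cf.~\cite[p.~256]{hilbert-min-dis} and Eq.~\eqref{sep18-23} applied to $I(Y)$), yielding ${\rm reg}(H_Y)={\rm reg}(S[u]/I(Y))$.

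Finally, part~(iv) is a formal consequence of parts~(ii), (iii), and Proposition~\ref{affine-projective}(ii), since $|X|=|Y|=m$ (each $Q_i$ gives a distinct projective point $[(Q_i,1)]$), and the associated primes of $I(X)$ and $I(Y)$ are in natural bijection $\mathfrak{q}_i\leftrightarrow\mathfrak{p}_i$ via this dehomogenization correspondence. There is no serious obstacle here; the only point worth highlighting is the asymmetry of degrees under dehomogenization in part~(ii), which requires one to bound v-numbers by inequalities in each direction rather than to match indicator functions of prescribed degrees on the nose.
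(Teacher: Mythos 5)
Your proposal is correct and follows essentially the same route as the paper: part (i) by the evaluation identities for homogenization/dehomogenization, part (ii) by matching degrees of indicator functions, part (iii) from Proposition~\ref{affine-projective}(ii) plus the Cohen--Macaulay fact that ${\rm reg}(S[u]/I(Y))$ equals the regularity index of $H_Y$, and part (iv) formally from the preceding parts. The one thing worth flagging is that in part (ii) the equality $\deg(H_i)={\rm v}_{\mathfrak{q}_i}(I(X))$ for a minimum-degree affine indicator relies on the affine analogue of Lemma~\ref{if}(b), namely \cite[Lemma~4.4]{dual}, which the paper cites explicitly and you use silently; also note the small typo where ``indicator of $[(Q_j,1)]$'' should read ``indicator of $[(Q_i,1)]$.''
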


\begin{proof} (i) $H_i^h$ is an indicator function of $[(Q_i,1)]$ because
$H_i^h(Q_j,1)=H_i(Q_j)$ for all $j$, and $L_i$ is an indicator function of $Q_i$
because $L_i(Q_j)=G_i(Q_j,1)$ for all $j$.  

(ii) By \cite[Lemma~4.4]{dual}, ${\rm
v}_{\mathfrak{q}_i}(I(X))$ is the least degree of an indicator
function of the affine point $Q_i$ and, by Lemma~\ref{if}(b), ${\rm
v}_{\mathfrak{p}_i}(I(Y))$ is the least degree of an indicator
function of the projective point $[(Q_i,1)]$. Then, by part (i), 
the equality ${\rm v}_{\mathfrak{q}_i}(I(X))={\rm
v}_{\mathfrak{p}_i}(I(Y))$ follows.   

(iii) It follows from Proposition~\ref{affine-projective}(ii). 

(iv) It follows from the fact that $H_X^a(d)=H_Y(d)$ for $d\geq 0$
(Proposition~\ref{affine-projective}), and using parts (ii) and (iii).
\end{proof}

\begin{corollary}\cite[Theorem~6.5]{dual}{\rm\ (Affine duality
criterion)}\label{duality-criterion-affine} 
Let $X$ be a subset of the affine space $\mathbb{A}^s=K^s$, $m=|X|\geq 2$, let $I(X)$ be its
vanishing ideal, let $r_0$ be the regularity index of $H_X^a$. 
The following conditions are equivalent.
\begin{enumerate} 
\item[(b')] $H_X^a(d)+H_X^a(r_0-d-1)=|X|$ for $0\leq d\leq r_0$ 
and $r_0={\rm v}_{\mathfrak{p}}(I(X))$ for $\mathfrak{p}\in{\rm Ass}(I(X))$.
\item[(c')] There is 
$\beta=(\beta_1,\ldots,\beta_m)\in K^m$ such that $\beta_i\neq 0$ for all $i$ and
$$
C_X(d)^\perp=(\beta_1,\ldots,\beta_m)\cdot C_X(r_0-d-1)\ \text{ for
all
\ $0\leq d\leq r_0$}.
$$
Moreover, $\beta$ is a vector that defines a parity  
check matrix of $C_X(r_0-1)$. 
\end{enumerate} 
\end{corollary}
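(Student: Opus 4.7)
The plan is to reduce the affine duality criterion to the projective duality criterion (Theorem~\ref{duality-criterion}) via the projective closure $Y = [X,1] \subset \mathbb{P}^s$. All the bridging machinery has already been assembled in Proposition~\ref{affine-projective} and Lemma~\ref{jul14-23}; what remains is essentially bookkeeping to verify that conditions (b') and (c') for $X$ translate, respectively, into conditions (b) and (c) of Theorem~\ref{duality-criterion} for $Y$.

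First, I would apply Theorem~\ref{duality-criterion} to the projective set $Y \subset \mathbb{P}^s$ with vanishing ideal $I(Y) \subset S[u]$, whose regularity equals $r_0$ by Lemma~\ref{jul14-23}(iii). This gives the equivalence of:
\begin{itemize}
\item[(b$_Y$)] $H_Y(d)+H_Y(r_0-d-1)=|Y|$ for $0 \leq d \leq r_0$, and $r_0 = {\rm v}_{\mathfrak{p}}(I(Y))$ for every $\mathfrak{p} \in {\rm Ass}(I(Y))$;
\item[(c$_Y$)] there exists $\beta \in (K^*)^m$ such that $C_Y(d)^\perp = \beta \cdot C_Y(r_0-d-1)$ for all $0 \leq d \leq r_0$, with $\beta$ defining a parity check matrix of $C_Y(r_0-1)$.
\end{itemize}

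Next, I would verify the translation (b') $\Leftrightarrow$ (b$_Y$). The associated primes of $I(X)$ are $\{\mathfrak{q}_1,\ldots,\mathfrak{q}_m\}$ where $\mathfrak{q}_i = I(Q_i)$, and the associated primes of $I(Y)$ are $\{\mathfrak{p}_1,\ldots,\mathfrak{p}_m\}$ where $\mathfrak{p}_i = I([(Q_i,1)])$. By Lemma~\ref{jul14-23}(ii), ${\rm v}_{\mathfrak{q}_i}(I(X)) = {\rm v}_{\mathfrak{p}_i}(I(Y))$ for each $i$, and by Proposition~\ref{affine-projective}(ii) together with $|Y| = |X|$, we have $H_X^a(d) = H_Y(d)$ for all $d \geq 0$. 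These equalities turn the numerical and v-number conditions of (b') into those of (b$_Y$).

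Finally, I would verify (c') $\Leftrightarrow$ (c$_Y$). By Proposition~\ref{affine-projective}(ii), $C_X(d) = C_Y(d)$ and $C_X(r_0-d-1) = C_Y(r_0-d-1)$ as subspaces of $K^m$ (having chosen the representative $(Q_i,1)$ for $[(Q_i,1)]$ and using the evaluation maps with these representatives). Hence the identity $C_X(d)^\perp = \beta \cdot C_X(r_0-d-1)$ holds for all $0 \leq d \leq r_0$ if and only if $C_Y(d)^\perp = \beta \cdot C_Y(r_0-d-1)$ holds for the same range, and ``$\beta$ defines a parity check matrix of $C_X(r_0-1)$'' is the same statement as ``$\beta$ defines a parity check matrix of $C_Y(r_0-1)$''. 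Chaining the three equivalences (b') $\Leftrightarrow$ (b$_Y$) $\Leftrightarrow$ (c$_Y$) $\Leftrightarrow$ (c') completes the proof.

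There is no serious obstacle here, since every needed ingredient has been prepared in advance; the only point requiring mild care is to make sure that the evaluation maps on $X$ (using the points $Q_i$) and on $Y$ (using the representatives $(Q_i,1)$) produce literally the same subspaces of $K^m$, which is immediate from the fact that for $f \in S_{\leq d}$ its homogenization $f^h$ of degree $d$ satisfies $f^h(Q_i,1) = f(Q_i)$.
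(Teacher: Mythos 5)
Your proposal is correct and follows exactly the same route as the paper: apply the projective duality criterion (Theorem~\ref{duality-criterion}) to the projective closure $Y=[X,1]$, and use Proposition~\ref{affine-projective} and Lemma~\ref{jul14-23} to translate the Hilbert functions, codes, regularity, and v-numbers between the affine and projective settings. The paper's own proof is just a one-line citation of these same ingredients, so your more explicit write-out matches it in substance.
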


\begin{proof}  It follows readily using
Proposition~\ref{affine-projective}, Lemma~\ref{jul14-23}, recalling
that $m=|X|=|Y|$, and
applying to $Y$ the 
projective duality criterion of Theorem~\ref{duality-criterion}. 
\end{proof}

\section{The $r$-th v-number}\label{section-v-number}
To avoid repetitions, we continue to employ 
the notations and definitions used in
Sections~\ref{section-introduction} and \ref{section-prelim}. 
In this section  we relate the regularity index of the $r$-th
generalized Hamming weight function 
$\delta_{\mathbb{X}}(\,\cdot\,,r)$ of $I$ and the 
$r$-th v-number of $I$.  

Let $F=\{f_1,\ldots, f_m\}$ be the set of standard indicator
functions of $\mathbb{X}=\{[P_1], \ldots, [P_m]\}$. There is a
permutation $\pi$ of $\{1,\ldots,m\}$ such that
$\deg(f_{\pi(1)}) \leq\cdots\leq\deg(f_{\pi(m)})$. 
The $r$-th
v-\textit{number} of $I=I(\mathbb{X})$, 
denoted by ${\rm{v}}_r(I)$, is given by:
\begin{equation}\label{mar8-24-1}
{\rm v}_r(I):=\deg(f_{\pi(r)})\ \mbox{ for }\ r=1,\ldots,m.
\end{equation}
\quad The $r$-th v-number is well defined, that is, ${\rm{v}}_r(I)$ is independent of $\pi$. 
Recall that $\dim_K(C_\mathbb{X}(d))$ is equal to $H_I(d)$. The \textit{initial degree} of
$\delta_\mathbb{X}(\,\cdot\,, r)$, denoted $\rho_r$, is given by
$$
\rho_r:=\max\{e\mid r>H_I(e)\}.
$$
\quad Note that $\rho_r<d$ if and only if $r\leq H_I(d)$. If $r=1$,
one has $\rho_r=-1$ because $H_I(-1)=0$. The
\textit{regularity index} of the $r$-th generalized Hamming weight 
function $\delta_{\mathbb{X}}(\,\cdot\,,r)$ is given by
\begin{equation}\label{mar8-24-2}
R_r:={\rm{reg}} \, (\delta_{\mathbb{X}}(\,\cdot\,,r)):=
\min \{d  \geq 1\mid \delta_{\mathbb{X}}(d,r)=r\}.
\end{equation}
\quad If $r=1$, recall that ${\rm{reg}} \, (\delta_{\mathbb{X}}(\,\cdot\,,1))$ is
denoted by ${\rm{reg}}(\delta_{\mathbb{X}})$. By \cite[Theorem 5.3]{min-dis-generalized}, one has
\begin{equation}\label{sarabia-baby}
\delta_{\mathbb{X}}(\rho_r+1,r) > \delta_{\mathbb{X}}(\rho_r+2,r)> 
\cdots
>\delta_{\mathbb{X}}(R_r,r)=\delta_{\mathbb{X}}(d,r)=r,\quad\forall\, d \geq R_r.
\end{equation}

The main result of this section shows 
that the regularity index $R_r$ of $\delta_{\mathbb{X}}(\,\cdot\,,r)$,
defined in Eq.~\eqref{mar8-24-2}, is precisely
the $r$-th v-number ${\rm v}_r(I)$ of the vanishing ideal $I$ of
$\mathbb{X}$, defined in Eq.~\eqref{mar8-24-1}.

\begin{theorem}\label{sarabia-vila-2}
Let $F=\{f_1,\ldots,f_m\}$ be the unique set of standard indicator
functions of $\mathbb{X}=\{[P_1],\ldots,[P_m]\}$ and let $R_r$ be the regularity index
of $\delta_\mathbb{X}(\,\cdot\,,r)$. The following hold.
\begin{enumerate}
\item[(a)] $R_r \leq {\rm{reg}}(S/I)$. 
\item[(b)] If $\deg(f_1)\leq\cdots\leq\deg(f_m)$, then
${\rm{v}}_r(I)=\deg(f_r)=R_r$ for $r=1,\ldots,m$.
\item[(c)] Let $\pi$ be a permutation of $\{1,\ldots,m\}$ such that
$\deg(f_{\pi(i)})\leq\deg(f_{\pi(j)})$ for $i<j$. Then, 
${\rm{v}}_r(I):=\deg(f_{\pi(r)})=R_r$ for $r=1,\ldots,m$.
\end{enumerate}
\end{theorem}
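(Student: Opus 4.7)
The strategy is to handle part (a) directly, then reduce (c) to (b) by relabeling, and split (b) into matching upper and lower bounds. Recall that for any $r$-dimensional subspace $D\subset K^m$ one has $|\chi(D)|\geq r$ (projection $D\to K^{\chi(D)}$ is injective by definition of $\chi$), so the inequality $\delta_\mathbb{X}(d,r)\geq r$ holds whenever $H_I(d)\geq r$. The task is therefore always to certify when equality $\delta_\mathbb{X}(d,r)=r$ is achievable or forbidden.

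For part (a), I would use that $H_I(r_0)=|\mathbb{X}|=m$, so $C_\mathbb{X}(r_0)=K^m$. Any $r$ unit vectors $e_{i_1},\ldots,e_{i_r}$ span an $r$-dimensional subspace $D\subset C_\mathbb{X}(r_0)$ with $|\chi(D)|=r$, so $\delta_\mathbb{X}(r_0,r)=r$ and hence $R_r\leq r_0$.

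For (b), set $d=\deg(f_r)$ and prove the upper bound $R_r\leq d$ constructively. For each $1\leq i\leq r$, pick $t_{j_i}$ with $t_{j_i}(P_i)\neq 0$ (possible since $P_i\neq 0$), and form $g_i:=t_{j_i}^{d-\deg(f_i)}f_i\in S_d$; this is still an indicator function of $[P_i]$. By Lemma~\ref{if}(c) the vectors ${\rm ev}_d(g_1),\ldots,{\rm ev}_d(g_r)$ are $K$-linearly independent and their span has support exactly $\{1,\ldots,r\}$, so $\delta_\mathbb{X}(d,r)\leq r$, which combined with $\delta_\mathbb{X}(d,r)\geq r$ (note $H_I(d)\geq r$ since $d\geq\deg(f_r)\geq {\rm v}(I)$) gives $R_r\leq d=\deg(f_r)$.

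The main obstacle is the lower bound $R_r\geq\deg(f_r)$, and I expect this to be the crux of the argument. I would argue by contradiction: suppose $\delta_\mathbb{X}(d,r)=r$ for some $d<\deg(f_r)$, so there exists an $r$-dimensional $D\subset C_\mathbb{X}(d)$ with $\chi(D)=I$, $|I|=r$. The restriction map $D\to K^I$ is injective (any $v\in D$ with zero restriction to $I$ is identically zero since $\chi(D)\subset I$) and so is a $K$-linear isomorphism, forcing each coordinate vector $e_i$ ($i\in I$) to lie in $D\subset C_\mathbb{X}(d)$. Therefore each $[P_i]$ with $i\in I$ admits an indicator function of degree $d$, giving $\deg(f_i)={\rm v}_{\mathfrak{p}_i}(I)\leq d<\deg(f_r)$ by Lemma~\ref{if}(b). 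But the sorted order $\deg(f_1)\leq\cdots\leq\deg(f_m)$ allows at most $r-1$ indices $j$ with $\deg(f_j)<\deg(f_r)$, contradicting $|I|=r$. Finally, part (c) follows by relabeling the points according to $\pi$ and invoking (b); this also shows that $\deg(f_{\pi(r)})$ is independent of the sorting permutation $\pi$, so ${\rm v}_r(I)$ is well defined.
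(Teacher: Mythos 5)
Your proof is correct and follows essentially the same route as the paper: part (a) from $C_\mathbb{X}(r_0)=K^m$, the upper bound $R_r\leq\deg(f_r)$ by padding the $f_i$ with powers of suitable variables, the lower bound by noting that an achieving subspace $D$ must contain unit vectors $e_{\ell_i}$ and hence each $[P_{\ell_i}]$ has an indicator function of degree $R_r$, and (c) by relabeling. One small imprecision: your parenthetical ``$H_I(d)\geq r$ since $d\geq\deg(f_r)\geq{\rm v}(I)$'' is not the right justification---the implication ${\rm v}(I)\leq d\Rightarrow H_I(d)\geq r$ is false in general; the correct reason, which you have already set up, is that ${\rm ev}_d(g_1),\ldots,{\rm ev}_d(g_r)$ are $r$ linearly independent vectors in $C_\mathbb{X}(d)$, so $\dim_K C_\mathbb{X}(d)=H_I(d)\geq r$.
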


\begin{proof} (a) By \cite[Theorem
7.10.1]{Huffman-Pless} and the generalized Singleton bound
\cite[Theorem~7.10.6]{Huffman-Pless}, we know that
$$
r \leq \delta_{\mathbb{X}}(d,r) \leq |\mathbb{X}|-H_I(d)+r\ \mbox{
for all }\ 1\leq
r\leq H_I(d).
$$
\quad If $r_0={\rm{reg}}(S/I)$, then 
$m=|\mathbb{X}|=H_I(r_0)$, 
$\delta_{\mathbb{X}}(r_0,r)=\delta_r({C_\mathbb{X}}(r_0))=\delta_r(K^m)=r$,
and
$$
R_r \leq r_0={\rm{reg}}\, (S/I).
$$
\quad (b)  Since
$\delta_\mathbb{X}(R_r,r)=\delta_r(C_\mathbb{X}(R_r))=r$ and
$\rho_r<R_r$, there is a subspace $D$ of $C_{\mathbb{X}}(R_r)$ of
dimension $r$ such that
$$
\chi(D)=\{\ell_1,\ldots,\ell_r\},\ 1\leq \ell_1<\cdots< \ell_r\leq m.
$$
\quad The support $\chi(\beta)$ of a vector $\beta\in K^m$ 
is $\chi(K\beta)$, that is, $\chi(\beta)$ is the set of non-zero
entries of $\beta$. Let $\beta_{i}=(\beta_{i,1},\ldots,\beta_{i,m})$,
$i=1,\ldots,r$, be a $K$-basis for $D$. According to
\cite[Lemma~2.1]{rth-footprint}, $\chi(D)=\bigcup_{i=1}^r\chi(\beta_i)$
and  the number of elements of
$\chi(D)$ is the number of non-zero columns of the generator matrix:
$$   
B=\left[\begin{matrix}
\beta_{1,1}&\cdots&\beta_{1,i}&\cdots&\beta_{1,m}\\
\beta_{2,1}&\cdots&\beta_{2,i}&\cdots&\beta_{2,m}\\
\vdots&\cdots&\vdots&\cdots&\vdots\\
\beta_{r,1}&\cdots&\beta_{r,i}&\cdots&\beta_{r,m}
\end{matrix}\right].
$$
\quad Thus, the non-zero columns of $B$ are $\ell_1,\ldots,\ell_r$. As
$B$ has rank $r$, the submatrix $B'$ of $B$ with columns
$\ell_1,\ldots\ell_r$ is non-singular. Hence, by applying only
elementary row operations, $B'$ can be reduced to the $r\times r$
identity matrix. Hence, $D$ is generated by the unit vectors 
$e_{\ell_1},\ldots,e_{\ell_r}$ of $K^m$, 
 and consequently there are $g_1,\ldots,g_r$ in $S_{R_r}$ such that
 ${\rm ev}_{R_r}(g_i)=e_{\ell_i}$ for $i=1,\ldots,r$. Thus, $g_i$ is
 an indicator function for $[P_{\ell_i}]$ for $i=1,\ldots,r$. By the
 division algorithm \cite[Theorem~3, 
p.~63]{CLO}, we can write $g_i=h_i+r_{g_i}$, where $h_i\in I_{R_r}$
and $r_{g_i}\in\Delta_\prec(I)_{R_r}$. As $r_{g_i}$ is a standard
indicator function of $[P_{\ell_i}]$, by Lemma~\ref{if}(b) and 
Proposition~\ref{indicator-function-prop}(a), we get 
\begin{equation}\label{jun24-23}
\deg(g_i)=R_r\geq\deg(f_{\ell_i})\mbox{ for }i=1,\ldots,r. 
\end{equation}
\quad For each $[P_i]$, there is $t_{j_i}$, $1\leq j_i\leq s$, such 
that $t_{j_i}(P_i)\neq 0$. Setting $\epsilon_i=\deg(f_r)-\deg(f_i)$,
$i=1,\ldots,r$, the indicator function $t_{j_i}^{\epsilon_i}f_i$ of
$[P_i]$ has degree $\deg(f_r)$ for $i=1,\ldots,r$. Since  
$$\{{\rm ev}_{\deg(f_r)}(t_{j_i}^{\epsilon_i}f_i)\}_{i=1}^r$$ 
generates a linear 
subspace of $C_\mathbb{X}(\deg(f_r))\subset K^m$ of dimension $r$
with support 
$\{1,\ldots,r\}$ (because
$t_{j_i}(P_i)\neq 0$ and $f_i(P_i)\neq 0$), we
get, from the definition of
$r$-generalized Hamming weight, that $\delta_r(C_\mathbb{X}(\deg(f_r)))\leq r$, $1\leq r\leq
H_I(\deg(f_r))$, and $\deg(f_r)\geq\rho_r+1$. Hence, by
Eq.~\eqref{sarabia-baby} and a previous inequality, 
we obtain
$$
r\leq\delta_\mathbb{X}(\deg(f_r),r)=\delta_r(C_\mathbb{X}(\deg(f_r)))\leq
r.
$$
\quad Thus, $\delta_r(C_\mathbb{X}(\deg(f_r)))=r$, and $\deg(f_r)\geq R_r$. Therefore, using
Eq.~\eqref{jun24-23}, one has
$$
\deg(f_r)\geq R_r\geq\deg(f_{\ell_r})\geq\deg(f_r),
$$
that is, $\deg(f_r)=R_r$, and the proof is complete.

(c) We set $[Q_i]=[P_{\pi(i)}]$, $g_i=f_{\pi(i)}$, 
$\mathbb{X}_1=\{[Q_i]\}_{i=1}^m$ and $F_1=\{f_{\pi(i)}\}_{i=1}^m$. 
Since $\delta_\mathbb{X}(d,r)$ is independent of how we order the 
points of $\mathbb{X}$, that is, 
$\delta_\mathbb{X}(d,r)=\delta_{\mathbb{X}_1}(d,r)$ 
(cf. \cite[Theorem~4.5]{rth-footprint}), by part (b) one has
\begin{align*}
{\rm v}_r(I)&={\rm v}_r(I(\mathbb{X}_1))=\deg(g_r)=
{\rm reg}(\delta_{\mathbb{X}_1}(\,\cdot\,,r))=\min \{d  \geq 1\mid
\rho_r<d,\ \delta_{\mathbb{X}_1}(d,r)=r\}\\ 
&=\min \{d  \geq 1\mid \rho_r<d,\ \delta_{\mathbb{X}}(d,r)=r\}=
{\rm reg}(\delta_{\mathbb{X}}(\,\cdot\,,r))=R_r.
\end{align*}
\quad Thus, ${\rm{v}}_r(I):=\deg(f_{\pi(r)})=\deg(g_r)=R_r$ for $r=1,\ldots,m$. 
\end{proof}

\begin{corollary}\label{then-so-does} Let $F=\{f_i\}_{i=1}^m$ be the set of standard
indicator functions of $\mathbb{X}=\{[P_i]\}_{i=1}^m$. The following
conditions are equivalent.
\begin{enumerate}
\item[\rm(a)] The degree of $f_i$ is equal to ${\rm reg}(S/I)$ 
for all $1\leq i\leq m$. 
\item[\rm(b)] ${\rm reg}(S/I)={\rm v}(I)={\rm reg}(\delta_\mathbb{X})$.
\item[\rm(c)] ${\rm v}_r(I)={\rm reg}(\delta_\mathbb{X}(\,\cdot\,,r))={\rm
reg}(S/I)$ for all $1\leq r\leq m$.
\end{enumerate}
\end{corollary}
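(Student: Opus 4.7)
The plan is to use Theorem~\ref{sarabia-vila-2} together with the relation between ${\rm v}(I)$ and the degrees of the standard indicator functions, namely ${\rm v}(I)=\min_i{\rm v}_{\mathfrak{p}_i}(I)=\min_i\deg(f_i)$, which follows from Eq.~\eqref{jun17-23} and Proposition~\ref{indicator-function-prop}(a). Throughout set $r_0={\rm reg}(S/I)$ and let $\pi$ be a permutation of $\{1,\ldots,m\}$ with $\deg(f_{\pi(1)})\leq\cdots\leq\deg(f_{\pi(m)})$, so that Theorem~\ref{sarabia-vila-2}(c) gives ${\rm v}_r(I)=\deg(f_{\pi(r)})=R_r$ for all $1\le r\le m$. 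Recall also from Lemma~\ref{if}(d) that $\deg(f_i)\leq r_0$ for every $i$.

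For (a)$\Rightarrow$(c): if every $\deg(f_i)$ equals $r_0$, then $\deg(f_{\pi(r)})=r_0$ for each $r$, and Theorem~\ref{sarabia-vila-2}(c) immediately yields ${\rm v}_r(I)=R_r=r_0={\rm reg}(S/I)$.

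For (c)$\Rightarrow$(b): specialize (c) to $r=1$, using that ${\rm v}_1(I)=\deg(f_{\pi(1)})=\min_i\deg(f_i)={\rm v}(I)$ and $R_1={\rm reg}(\delta_\mathbb{X})$ (which is Proposition~\ref{reg-min-dis}, or equivalently Theorem~\ref{sarabia-vila-2}(c) at $r=1$).

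For (b)$\Rightarrow$(a): from ${\rm v}(I)=\min_i\deg(f_i)$ and the hypothesis ${\rm v}(I)=r_0$, we get $\deg(f_i)\geq r_0$ for every $i$; combined with the upper bound $\deg(f_i)\leq r_0$ from Lemma~\ref{if}(d), we conclude $\deg(f_i)=r_0$ for all $i$.

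There is no real obstacle: all three implications are short consequences of Theorem~\ref{sarabia-vila-2}, the characterization of ${\rm v}(I)$ as the minimum of the $\deg(f_i)$, and the bound $\deg(f_i)\leq r_0$. The only point needing mild care is to record the two identifications ${\rm v}(I)={\rm v}_1(I)$ and ${\rm reg}(\delta_\mathbb{X})=R_1$ so that (c) genuinely specializes to (b) at $r=1$.
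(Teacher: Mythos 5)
Your proof is correct and uses the same ingredients as the paper's (Theorem~\ref{sarabia-vila-2}, Proposition~\ref{indicator-function-prop}, Lemma~\ref{if}, Proposition~\ref{reg-min-dis}, and the identification ${\rm v}(I)=\min_i\deg(f_i)$); you merely run the cycle of implications as (a)$\Rightarrow$(c)$\Rightarrow$(b)$\Rightarrow$(a) instead of the paper's (a)$\Rightarrow$(b)$\Rightarrow$(c)$\Rightarrow$(a), and you spell out each step where the paper simply cites the relevant results.
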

\begin{proof} (a)$\Rightarrow$(b) This implication 
follows from Propositions~\ref{indicator-function-prop} and
\ref{reg-min-dis}.

(b)$\Rightarrow$(c) This follows using Lemma~\ref{if},
Proposition~\ref{indicator-function-prop} 
and Theorem~\ref{sarabia-vila-2}.

(c)$\Rightarrow$(a) This implication follows from the definition of ${\rm v}_r(I)$.
\end{proof}

\section{Examples}\label{examples-section}

\begin{example}\label{gorenstein-2essential-example} 
Consider the following set of
points in $\mathbb{P}^3$ over the field $K=\mathbb{F}_3$: 
$$
\mathbb{X}=\{[(-1,-1,-1,1)],[(1,1,1,1)],[(0,1,1,1)],[(0,-1,-1,1)]\}
=\{[P_i]\}_{i=1}^4.
$$
\quad Adapting 
Procedure~\ref{sep12-18}, one can verify the following assertions. If
$S=K[t_1,t_2,t_3,t_4]$ is a polynomial ring with the GRevLex 
order $t_1\succ\cdots\succ t_4$, then the vanishing ideal of
$\mathbb{X}$ is a complete intersection (hence, Gorenstein) given by
$$
I=(t_2-t_3,\, t_3^2-t_4^2,\, t_1^2-t_1t_3),
$$
$H_I(0)=1$, $H_I(1)=3$, $H_I(d)=4$ for $d\geq 2$, ${\rm reg}(S/I)=2$.
The unique set $F=\{f_1,f_2,f_3,f_4\}$, up to 
multiplication by scalars from $K^*$, of standard indicator functions
of $\mathbb{X}$ is given by
$$
F=
\{t_1t_3-t_1t_4,\,  t_1t_3+t_1t_4,\,  t_1t_3+t_1t_4-t_3t_4-t_4^2,\,
t_1t_3-t_1t_4+t_3t_4-t_4^2\}, 
$$
$f_1(P_1)=-1$, $f_2(P_2)=-1$, $f_3(P_3)=1$, $f_4(P_4)=1$, 
and $t_1t_3$, $t_1t_4$ are the only essential monomials. Setting
$\Gamma_1=\{1\}$, $\Gamma_2=\{t_1,t_3,t_4\}$, and $\gamma=(-1,-1,1,1)$,
by Corollary~\ref{combinatorial-condition-projective}
one has
$$
C_\mathbb{X}(1)^\perp={\rm
ev}_1(K\Gamma_2)^\perp=\gamma\cdot{\rm ev}_0(K\Gamma_1)=\gamma\cdot 
C_\mathbb{X}(0)=K\gamma.
$$
\end{example}

\begin{example}\label{gorenstein-essential-example} 
Let $\mathbb{X}$ be the following set of
points in $\mathbb{P}^3$ over the field $K=\mathbb{F}_3$: 
$$
\mathbb{X}=\{[(1,0,-1,1)],[(1,0,1,1)],[(0,1,-1,-1)],[(0,0,1,-1)],[(0,1,1,1)]\}
=\{[P_i]\}_{i=1}^5,
$$
and let $S=K[t_1,t_2,t_3,t_4]$ be a polynomial ring over the field $K$
with the GRevLex order. Using 
Procedure~\ref{aug23-23}, we obtain the following information. The vanishing ideal of
$\mathbb{X}$ is
$$
I=(t_3^2-t_4^2,\ t_2t_3-t_2t_4,\ t_2^2-t_1t_3-t_1t_4+t_3t_4+t_4^2,\
t_1t_2,\ t_1^2-t_1t_4),
$$
and the unique set $F$, up to 
multiplication by scalars from $K^*$, of the standard indicator functions
of $\mathbb{X}$ is given by
\begin{align*}
&F=\{f_1=t_1t_3-t_1t_4,\,  f_2=t_1t_3+t_1t_4,\, 
f_3=t_1t_3+t_1t_4-t_2t_4-t_3t_4-t_4^2,\,\\ 
&\quad\quad\ f_4=t_1t_3-t_1t_4-t_3t_4+t_4^2,\,
f_5=t_1t_3+t_1t_4+t_2t_4-t_3t_4-t_4^2\},
\end{align*}
$f_1(P_1)=1$, $f_i(P_i)=-1$ for $i=2,\ldots,5$, and $t_1t_3$ is an
essential monomial. Note that $t_1t_4$ is also essential. 
The linear form $h=t_1+t_4$  is regular on $S/I$,
that is, $(I\colon h)=I$. The variable $t_4$ is also regular on $S/I$.
Setting $J=(I,h)$, the socle of $S/J$ is
$$
{\rm Soc}(S/J)=(J \colon\mathfrak{m})/J=K(f_i+J)=K(t_3t_4+J),   
$$
where $\mathfrak{m}=(t_1,\ldots,t_4)$. Thus, 
$I$ is a Gorenstein ideal (see Section~\ref{section-gorenstein}) and $I$ is not a complete
intersection. The remainder on division of $f_i$ by $J$ is equal to
$t_3t_4$ for all $i$ (Corollary~\ref{socle-gorenstein-coro}). 

The standard indicator functions of $\mathbb{X}$ and the essential monomials 
depend on the monomial order we choose but the degrees of the standard
indicator functions are independent of the monomial order
(Proposition~\ref{indicator-function-prop}). If
$S=K[t_4,t_3,t_2,t_1]$ has the graded lexicographical 
order (GLex order) $t_4\succ\cdots\succ t_1$, the unique set $F$, up to 
multiplication by scalars from $K^*$, of the standard indicator functions
of $\mathbb{X}$ is given by
$$
F=\{f_1=t_3t_1-t_1^2,\, f_2=t_3t_1+t_1^2,\, f_3=t_3t_2-t_2^2,\,
f_4=t_3^2-t_2^2-t_1^2,\,f_5=t_3t_2+t_2^2\},
$$
$f_1(P_1)=1$, $f_2(P_2)=-1$, $f_3(P_3)=1$, $f_4(P_4)=1$,
$f_5(P_5)=-1$, and for this order there is no essential monomial. This
order is equivalent to the \textit{inverse lexicographical} order
(\textit{invlex} order) on $S=K[t_1,t_2,t_3,t_4]$ defined as $t^a\succ t^b$
if and only if the last non-zero entry of $a-b$ is positive
\cite[p.~59]{CLO}. The
invlex order is implemented in SageMath \cite{sage}.
\end{example}

\begin{example}\label{local-duality-example}
Let $K$ be the finite field $\mathbb{F}_3$, let $S=K[t_1,t_2,t_3]$ be a
polynomial ring with the GRevLex order
$t_1\succ t_2\succ
t_3$, let
$\mathbb{X}=\{[P_1],\ldots,[P_{9}]\}$ be the
following set of points in $\mathbb{P}^2$:
$$
\begin{matrix}
[(0, 0, 1)],& [(0, 1, 1)],& [(0, 2, 1)],& [(1, 0, 1)],& [(1, 1, 1)],\cr 
[(1, 2, 1)],& [(2, 0, 1)],& [(2, 1, 1)],& [(2, 2, 1)],
\end{matrix}
$$
and let $\mathfrak{p}_1,\ldots,\mathfrak{p}_{9}$ be the vanishing
ideals of these points:
$$
\begin{array}{ccccc}
(t_1,t_2),&(t_1,t_2-t_3), &(t_1,t_2+t_3), &(t_2,t_1-t_3),&(t_1-t_3,t_2-t_3),\cr
(t_1+t_2,t_1-t_3),&(t_1+t_3,t_2),
&(t_1+t_2,t_1+t_3),& (t_1-t_2,t_1+t_3).
\end{array}
$$
\quad Using Eq.~\eqref{jun17-23} and
Proposition~\ref{indicator-function-prop}, and adapting 
Procedure~\ref{sep12-18}, we obtain that the vanishing
ideal $I$ of $\mathbb{X}$ is a complete intersection given by 
$$
I=(t_2^3-t_2t_3^2,\, t_1^3-t_1t_3^2), 
$$
the unique list $f_1,\ldots,f_{9}$, up to 
multiplication by scalars from $K^*$, of the standard indicator functions of  
$\mathbb{X}$ is given by
\begin{align*}
&t_1^2t_2^2-t_1^2t_3^2-t_2^2t_3^2+t_3^4,\,t_1^2t_2^2+t_1^2t_2t_3-t_2^2t_3^2-t_2t_3^3 ,\, 
t_1^2t_2^2-t_1^2t_2t_3-t_2^2t_3^2+t_2t_3^3,\\
&t_1^2t_2^2+t_1^2t_2t_3+t_1t_2^2t_3+t_1t_2t_3^2,\,t_1^2t_2^2+t_1t_2^2t_3-t_1^2t_3^2-t_1t_3^3,\, 
t_1^2t_2^2-t_1^2t_2t_3+t_1t_2^2t_3-t_1t_2t_3^2,\\
&t_1^2t_2^2-t_1t_2^2t_3-t_1^2t_3^2+t_1t_3^3,\,t_1^2t_2^2+t_1^2t_2t_3-t_1t_2^2t_3-t_1t_2t_3^2
,\, 
t_1^2t_2^2-t_1^2t_2t_3-t_1t_2^2t_3+t_1t_2t_3^2,
\end{align*}
the v-number ${\rm v}_{\mathfrak{p}_i}(I)$ locally at $\mathfrak{p}_i$
is equal to $4$ for all $i$,  
$f_i(P_i)=1$ for all $i$, the monomial $t_1^2t_2^2$ is essential, 
the Hilbert function of $I$ is
$$
H_I(0)=1,\ H_I(1)=3,\ H_I(2)=6,\ H_I(3)=8,\ H_I(d)=9\mbox{ for }d\geq
4,
$$
the footprint in degrees $1$ to $4$ is given by 
\begin{align*}
&\Delta_\prec(I)_1=\{t_1,t_2,t_3\},\, \Delta_\prec(I)_2=\{t_1^2, t_1t_2, t_1t_3,
t_2^2, t_2t_3, t_3^2\},\\
& \Delta_\prec(I)_3=\{t_1^2t_2,
t_1^2t_3, t_1t_2^2, t_1t_2t_3, t_1t_3^2, t_2^2t_3, t_2t_3^2,
t_3^3\},\\
&\Delta_\prec(I)_4=\{t_1^2t_2^2, t_1^2t_2t_3, t_1^2t_3^2, t_1t_2^2t_3,
t_1t_2t_3^2, t_1t_3^3, t_2^2t_3^2, t_2t_3^3, t_3^4\}.
\end{align*}
\quad To illustrate the local duality criterion of
Theorem~\ref{combinatorial-condition}, we set
$$\Gamma_1=\{t_1,t_2,t_3\}\ \mbox{ and }\ 
\Gamma_2=\{t_1^2t_3, t_1t_2t_3, t_1t_3^2, t_2^2t_3, t_2t_3^2,
t_3^3\}.
$$ 
\quad Conditions (1)-(3) in Theorem~\ref{combinatorial-condition} are satisfied and
$t^e=t_1^2t_2^2$ is essential, then we have 
\begin{align*}
&\beta_i=1\, \forall i,\quad \gamma=(1,\ldots,1),\quad\gamma\cdot{\rm ev}_1(K\Gamma_1)={\rm ev}_1(K\Gamma_1)={\rm
ev}_3(K\Gamma_2)^\perp,\\ 
&{\rm ev}_1(K\Gamma_1)=K\{(0,0,0,1,1,1,-1,-1,-1),\,
(0,1,-1,0,1,-1,0,1,-1),\,
(1,1,1,1,1,1,1,1,1)\},\\
&{\rm
ev}_3(K\Gamma_2)=K\{(0,0,0,1,1,1,1,1,1),\,(0,0,0,0,1,-1,0,-1,1),\,(0,0,0,1,1,1,-1,-1,-1),\\
&\quad\quad\quad\quad\quad\quad(0,1,1,0,1,1,0,1,1),\,(0,1,-1,0,1,-1,0,1,-1),\,(1,1,1,1,1,1,1,1,1)\},
\end{align*}
${\rm v}(I)={\rm
reg}(\delta_\mathbb{X})=4$, ${\rm
reg}(S/I)=4$, $\delta_\mathbb{X}(1)=6$, $\delta_\mathbb{X}(2)=3$,
$\delta_\mathbb{X}(3)=2$, and
$\delta_\mathbb{X}(d)=1$ for $d\geq 4$. By
Theorem~\ref{duality-criterion}, one has
$C_\mathbb{X}(1)^\perp=C_\mathbb{X}(2)$ and, by
Theorem~\ref{sarabia-vila-2}, one has 
$$
R_i = 4  \,\, \mbox{ for } \,\, i=1,\ldots, 9.
$$
\end{example}

\begin{example}\label{Ivan-Hiram-p}
Let $K$ be the finite field $\mathbb{F}_3$, let $S=K[t_1,t_2,t_3,t_4]$ be
a polynomial ring, 
let $\prec$ be the GRevLex
order on $S$, let $I=I(\mathbb{X})$ be the vanishing ideal of the set of
points
$$
\mathbb{X}=\{[(1,0,0,1)],\, [(0,1,0,1)],\, [(0,0,1,1)],\,
[(0,0,0,1)],\, [(2,2,2,1)]\},
$$
let $[P_i]$ be the point in $\mathbb{X}$ in the $i$-th position from the left, and let $\mathfrak{p}_i$ be the
vanishing ideal of $[P_i]$. Using the procedures of
Appendix~\ref{Appendix}, we get the following information.
The set  
\begin{align*}
\mathcal{G}=&\{t_2t_3+t_3^2-t_3t_4,\, t_1t_3+t_3^2-t_3t_4,\,
t_2^2-t_3^2-t_2t_4+t_3t_4,\, \\ 
&\ t_1t_2+t_3^2-t_3t_4,\, t_1^2-t_3^2-t_1t_4+t_3t_4,\, 
t_3^3-t_3t_4^2\},
\end{align*}
is a Gr\"obner basis of $I$. The ideal $I$ is Gorenstein
because  $I$ is a Cohen--Macaulay ideal of height $3$ and 
the minimal resolution of $S/I$ by free $S$-modules is
given by 
$$
0\longrightarrow S(-5)\longrightarrow S(-3)^5\longrightarrow
S(-2)^5\longrightarrow S\longrightarrow S/I\longrightarrow 0. 
$$
\quad The projective dimension of $S/I$ is $3$, and $I$ is not a
complete intersection, that is, $I$ cannot be generated by $3$
elements. The graded ring $S/I$ has symmetric h-vector given by 
${\rm h}(S/I)=(1,3,1)$ and $r_0={\rm reg}(H_I)=2$. By
Proposition~\ref{jul5-23}, $H_I(d)+H_I(r_0-d-1)=|\mathbb{X}|$ 
for all $0\leq d\leq r_0$. The set of standard
monomials of $S/I$ of degree $r_0$ is
$$
\Delta_\prec(I)_{r_0}=\{t_1t_4,\, t_2t_4,\, t_3^2,\, t_3t_4,\, t_4^2\},
$$
and the unique set $F=\{f_i\}_{i=1}^5$, up to 
multiplication by scalars from $K^*$, of standard indicator functions
of $\mathbb{X}$ is
\begin{align*}
&f_1=t_3^2-t_1t_4-t_3t_4,\, f_2=t_3^2-t_2t_4-t_3t_4,\, f_3=t_3^2+t_3t_4,\\ 
&f_4=t_3^2-t_1t_4-t_2t_4+t_3t_4+t_4^2,\, f_5=t_3^2-t_3t_4.
\end{align*}
\quad Thus, ${\rm v}_{\mathfrak{p}_i}(I)=r_0$ for all $i$
(Proposition~\ref{indicator-function-prop}), and the two conditions of
Theorem~\ref{duality-criterion}(b) are satisfied. Letting 
$\beta=({\rm lc}(f_1)f_1(P_1)^{-1},\ldots,{\rm
lc}(f_m)f_m(P_m)^{-1})$, one has $f_i(P_i)=-1$ for $i\neq 
4$, $f_4(P_4)=1$, and $\beta=(-1,-1,-1,1,-1)$. By 
Proposition~\ref{gorenstein-essential}(b), $H=\beta$ is a parity check matrix
of $C_\mathbb{X}(r_0-1)=C_\mathbb{X}(1)$. Hence, by
Theorem~\ref{duality-criterion}, 
we obtain 
$$
C_\mathbb{X}(1)^\perp=\beta\cdot C_\mathbb{X}(0)=K(-1,-1,-1,1,-1).
$$
\end{example}

\begin{example}\label{self-dual-example}
Let $K$ be the finite field $\mathbb{F}_{4}$, let $a$ be a generator
of the cyclic group $K^*$, let $S=K[t_1,t_2,t_3]$ be a
polynomial ring with the GRevLex order, let
$\mathbb{X}=\{[P_1],\ldots,[P_{6}]\}$ be the
following set of points in $\mathbb{P}^2$:
$$
\begin{matrix}
[(1, 0, 1)],& [(a, 0, 1)],& [(a^2, 0, 1)],& [(0, 1, 1)],& [(0, a^2, 1)],& [(0,
a, 1)],
\end{matrix}
$$
and let $\mathfrak{p}_1,\ldots,\mathfrak{p}_{6}$ be the vanishing
ideals of these points. Using Eq.~\eqref{jun17-23} and
Proposition~\ref{indicator-function-prop}, 
together with Procedures~\ref{sep12-18} and \ref{aug23-23}, 
we obtain that the vanishing
ideal $I$ of $\mathbb{X}$ is given by 
$$
I=(t_1t_2,\ t_1^3+t_2^3+t_3^3), 
$$
a Gr\"obner basis of $I$ is $\mathcal{G}=\{t_1t_2,\,
t_1^3+t_2^3+t_3^3,\, t_2^4+t_2t_3^3\}$, 
the unique list $f_1,\ldots,f_{6}$, up to 
multiplication by scalars from $K^*$, of the standard indicator functions of 
$\mathbb{X}$ is given by
\begin{align*}
&t_2^3+t_1^2t_3+t_1t_3^2+t_3^3,\,
t_2^3+at_1^2t_3+a^2t_1t_3^2+t_3^3,\,
t_2^3+a^2t_1^2t_3+at_1t_3^2+t_3^3,\\
&t_2^3+t_2^2t_3+t_2t_3^2,\, t_2^3+a^2t_2^2t_3+at_2t_3^2,\,
t_2^3+at_2^2t_3+a^2t_2t_3^2,
\end{align*}
${\rm lc}(f_i)=f_i(P_i)=1$ for $i=1,\ldots,6$, 
the Hilbert function of $I$ is
$$
H_I(0)=1,\ H_I(1)=3,\ H_I(2)=5,\ H_I(3)=6,\ H_I(d)=6\mbox{ for
}d\geq 3,
$$
${\rm v}(I)={\rm
reg}(\delta_\mathbb{X})=3$, $r_0={\rm
reg}(S/I)=3$, $\delta_\mathbb{X}(1)=3$, $\delta_\mathbb{X}(2)=2$, and
$\delta_\mathbb{X}(d)=1$ for $d\geq 3$. The footprint of $I$ in
degree $r_0$ is 
$$\Delta_\prec(I)_{r_0}=\{t_1^2t_3,\  t_1t_3^2,\  t_2^3,\  t_2^2t_3,\  t_2t_3^2,\ 
t_3^3\}.$$
\quad As $I$ is a complete
intersection and $r_0-d-1=d$, where $d=1$, by
Corollary~\ref{duality-criterion-g} and
Proposition~\ref{gorenstein-essential}, we have 
$C_\mathbb{X}(1)^\perp=C_\mathbb{X}(1)$. Note that
$C_\mathbb{X}(0)\subsetneq C_\mathbb{X}(0)^\perp$ because
$C_\mathbb{X}(0)=K(1,\ldots,1)$ and ${\rm char}(K)=2$. Let $P$ be the matrix with
rows $P_1,\ldots,P_6$ and let $C_1,C_2,C_3$ be the columns of $P$. By
Proposition~\ref{self-dual-1},
$C_\mathbb{X}(1)^\perp=C_\mathbb{X}(1)=K\{C_1,C_2,C_3\}$. 
\end{example}

\begin{example}\label{algorithm-example}
Let $K$ be the finite field $\mathbb{F}_{9}$, let $a$ be a generator
of the cyclic group $K^*$, let $S=K[t_1,t_2]$ be a polynomial ring,
and let $\mathbb{X}$ be the projective space $\mathbb{P}^1$:
$$\mathbb{X}=
\{[(1,0)],[(1,1)],[(1,a)],[(1,a^2)],[(1,a^3)],[(1,a^4)],[(1,a^5)],[(1,a^6)],
[(1,a^7)],[(0,1)]\}.
$$
\quad The vanishing ideal of $\mathbb{X}$ is
$I=(t_1^9t_2-t_1t_2^9)$. Using Procedure~\ref{sep29-23}, we get 
that $C_\mathbb{X}(4)$ is the only self dual code of the form
$C_\mathbb{X}(d)$ (cf. \cite[Theorem~2]{sorensen}) and there are no
other self orthogonal codes of the form $C_\mathbb{X}(d)$.  
\end{example}

\begin{example}\label{p2} Let $\mathbb{X}$ be the set of all
$K$-rational points of 
$\mathbb{P}^2$, where $K=\mathbb{F}_3$, and let
$S$ be the polynomial ring $K[t_1,t_2,t_3]$. The vanishing ideal $I$ of
$\mathbb{X}$ is given by \cite{sorensen}:
$$I=(t_1t_2^3-t_1^3t_2,t_1t_3^3-t_1^3t_3,t_2t_3^3-t_2^3t_3).$$ 
\quad Using Procedure~\ref{sep29-23}, we get that $C_\mathbb{X}(1)$ and 
$C_\mathbb{X}(2)$ are the only self orthogonal codes of the form 
$C_\mathbb{X}(d)$ and they are not self dual. The set $\mathbb{X}$ has
$13$ elements and ${\rm reg}(S/I)=5$. From the minimal graded free
resolution of $S/I$:
$$
0\longrightarrow S(-5)\oplus S(-7)\longrightarrow
S(-4)^3\longrightarrow S\longrightarrow S/I\longrightarrow 0,
$$ 
we obtain that $I$ is neither complete intersection, nor Gorenstein.
The v-number of $I$ is $5$ and the minimum socle 
degree of $I$ is $3$.
\end{example}

\begin{example}\label{monomially-self-dual-example}
Let $K$ be the finite field $\mathbb{F}_{5}$, let $S=K[t_1,t_2]$ be a
polynomial ring with the GRevLex order, let
$\mathbb{X}=\{[P_1],\ldots,[P_{4}]\}$ be the
projective torus in $\mathbb{P}^1$ defined by the points:
$$
\begin{matrix}
P_1=[(1, 1)],&P_2= [(2,1)],& P_3=[(3,1)],& P_4=[(4,1)].
\end{matrix}
$$
\quad The vanishing ideal of $\mathbb{X}$ is $I=(t_1^4-t_2^4)$ and the
regularity $r_0$ of $S/I$ is $3$. 
Using Procedures~\ref{sep12-18} and \ref{aug23-23}, 
we obtain that the unique list $f_1,\ldots,f_{4}$, up to 
multiplication by scalars from $K^*$, of the standard indicator functions of 
$\mathbb{X}$ is given by
\begin{align*}
&t_1^3+t_1^2t_2+t_1t_2^2+t_2^3,\, t_1^3+2t_1^2t_2-t_1t_2^2-2t_2^3,\, 
t_1^3-2t_1^2t_2-t_1t_2^2+2t_2^3,\, t_1^3-t_1^2t_2+t_1t_2^2-t_2^3,
\end{align*}
${\rm lc}(f_i)=1$ for $i=1,\ldots,4$,
$(f_1(P_1),\ldots,f_4(P_4))=(-1,2,-2,1)$, and 
$$\beta:=({\rm lc}(f_1)f_1(P_1)^{-1},\ldots,{\rm
lc}(f_4)f_4(P_4)^{-1})=(-1,3,-3,1).$$
\quad By Proposition~\ref{gorenstein-essential}, the $1\times 4$ matrix $H$
defined by the vector $\beta$ is a parity check matrix of
the linear code $C_\mathbb{X}(r_0-1)$, that is,
\begin{equation*}
C_\mathbb{X}(2)=C_\mathbb{X}(r_0-1)=\{x\in K^4\mid Hx=0\}.
\end{equation*}
\quad As $I$ is a complete intersection and $r_0-d-1=d$, where $d=1$, by
Proposition~\ref{self-dual}, $C_\mathbb{X}(1)$ is monomially equivalent
to $C_\mathbb{X}(1)^\perp$. Then, by Corollary~\ref{duality-criterion-g} and
Proposition~\ref{gorenstein-essential}, we have 
$C_\mathbb{X}(1)^\perp=\beta\cdot C_\mathbb{X}(1)$. Note that the
linear code $C_\mathbb{X}(1)$ is not self dual because the vector 
$(t_2(P_1),\ldots,t_2(P_4))=(1,1,1,1)$ is in $C_\mathbb{X}(1)$ and is
not orthogonal to itself (cf. Theorem~\ref{self-dual-general}). In
this example $\delta_\mathbb{X}(1)=3$, $H_I(1)=2$, and the code
$C_\mathbb{X}(1)$ is
MDS, that is, equality holds in the Singleton bound
\cite[p.~71]{Huffman-Pless}.
\end{example}

\begin{example}\label{Hiram-first-counterxample}
Let $K$ be the finite field $\mathbb{F}_3$, let $S=K[t_3,t_2,t_1]$ be a
polynomial ring with the GLex order $t_3\succ t_2\succ
t_1$, let
$\mathbb{X}=\{[P_1],\ldots,[P_{10}]\}$ be the
following set of points in $\mathbb{P}^2$:
$$
\begin{matrix}
[(1,0,1)],&[(1,0,0)],&[(1,0,2)],&[(1,1,0)],&[(1,1,1)],\cr
[(1,1,2)],&[(0,0,1)],&[(0,1,0)],&[(0,1,1)],&[(0,1,2)],
\end{matrix}
$$
and let $\mathfrak{p}_1,\ldots,\mathfrak{p}_{10}$ be the vanishing
ideals of these points:
$$
\begin{array}{ccccc}
(t_2,-t_3+t_1),&(t_2,t_3),&(t_2,-t_3-t_1), &(-t_2+t_1,t_3), &(-t_3+t_1,-t_3+t_2),\cr
(-t_3-t_1,-t_3-t_2),&(t_1,t_2),&(t_1,t_3), &(t_1,-t_3+t_2),& (t_1,-t_3-t_2).
\end{array}
$$
\quad Using Eq.~\eqref{jun17-23} and
Proposition~\ref{indicator-function-prop}, 
together with Procedure~\ref{sep12-18}, we obtain that the vanishing
ideal $I$ of $\mathbb{X}$ is given by 
$$
I=(t_1t_2^2-t_1^2t_2,\,t_1t_3^3-t_1^3t_3,\,t_2t_3^3-t_2^3t_3), 
$$
the unique list $f_1,\ldots,f_{10}$, up to 
multiplication by scalars from $K^*$,  of the standard indicator functions of 
$\mathbb{X}$ is given by
\begin{align*}
&t_3^2t_2t_1-t_3^2t_1^2+t_3t_2t_1^2-t_3t_1^3,\, t_3^2t_2t_1-t_3^2t_1^2-t_2t_1^3+t_1^4,\, 
 t_3^2t_2t_1-t_3^2t_1^2-t_3t_2t_1^2+t_3t_1^3,\, \\ 
&t_3^2t_2t_1-t_2t_1^3,\,  t_3^2t_2t_1+t_3t_2t_1^2,\, 
t_3^2t_2t_1-t_3t_2t_1^2,\, t_3^3-t_3t_2^2+t_3t_2t_1-t_3t_1^2,\, 
\\
& t_3^2t_2^2-t_3^2t_2t_1-t_2^4+t_2t_1^3,\, 
t_3^2t_2^2-t_3^2t_2t_1+t_3t_2^3-t_3t_2t_1^2,\, 
t_3^2t_2^2-t_3^2t_2t_1-t_3t_2^3+t_3t_2t_1^2,
\end{align*} 
and ${\rm v}(I)=\deg(f_7)=\deg(t_3^3-t_3t_2^2+t_3t_2t_1-t_3t_1^2)=3$.
The Hilbert function of $I$ is
$$
H_I(0)=1,\ H_I(1)=3,\ H_I(2)=6,\ H_I(3)=9,\ H_I(d)=10\mbox{ for
}d\geq 4,
$$
${\rm v}(I)={\rm
reg}(\delta_\mathbb{X})=3$ and ${\rm
reg}(S/I)=4$. By
Theorem~\ref{sarabia-vila-2}, one has 
$$
R_1=3,\  R_i = 4  \,\, {\mbox{for}} \,\, i=2,\ldots, 10.
$$
\quad By convention we set $\delta_\mathbb{X}(d,r)=\infty$ if 
$\rho_r\geq d$ or equivalently if $r>H_I(d)$. The matrix of
generalized Hamming weights is the $r_0 \times |\mathbb{X}|=4 \times
10$ matrix given by:
$$
(\delta_{\mathbb{X}}(d,r))=\begin{pmatrix}
6 & 9 & 10 & \infty & \infty & \infty & \infty & \infty & \infty & \infty \\
3 & 5 & 6 & 8 & 9 & 10 & \infty & \infty & \infty & \infty \\
1 & 3 & 4 & 5 & 6  & 7 & 8 & 9 & 10  & \infty \\
1 & 2 & 3 & 4 & 5 & 6 & 7 & 8 & 9 & 10
\end{pmatrix}.
$$
\quad This follows using that $({\rm
fp}_I(d,r))\leq(\delta_{\mathbb{X}}(d,r))$
\cite[Theorem~4.9]{rth-footprint}, where the matrix on the
left is the footprint matrix of $I$ \cite[p.~319--320]{rth-footprint},
and the fact that the rows (resp. columns) of
$(\delta_{\mathbb{X}}(d,r))$ form an increasing (resp. decreasing) 
sequence \cite{min-dis-generalized,wei}.
It is worth noticing that $I$ is strongly Geil--Carvalho
\cite[Definition~3.12]{min-dis-generalized}, that is, 
$({\rm fp}_I(d,r))=(\delta_{\mathbb{X}}(d,r))$.
\end{example}

\begin{example}\label{example-parameterized} Let $S=K[t_1,\ldots,t_s]$
be a polynomial ring over $K=\mathbb{F}_q$, 
let $y_1,\ldots,y_n$ be the indeterminates of a ring of Laurent
polynomials with coefficients in $K$ and let 
$\{y^{v_1},\ldots,y^{v_s}\}$ be a finite set of Laurent
monomials.  Given an integer vector $v_i=(v_{i,1},\ldots,v_{i,n})\in\mathbb{Z}^n$, we set 
$$
y^{v_i}:=y_1^{v_{i,1}}\cdots y_n^{v_{i,n}},\ \ \ \ i=1,\ldots,s. 
$$
\quad Following \cite{afinetv,algcodes}, we define the 
{\it algebraic toric set\/} parameterized  by $y^{v_1},\ldots,y^{v_s}$
as 
$$
\mathbb{X}:=\{[(x_1^{v_{1,1}}\cdots x_n^{v_{1,n}},\ldots,x_1^{v_{s,1}}\cdots
x_n^{v_{s,n}})]\, \vert\, x_i\in K^*\mbox{ for all
}i\}\subset\mathbb{P}^{s-1},
$$
where $K^*=K\setminus\{0\}$. According to \cite{algcodes}, $I(\mathbb{X})$ is a
lattice ideal of $S$, that is, $I(\mathbb{X})$ is generated by binomials and
$t_i$ is not a zero-divisor of $S/I(\mathbb{X})$ for all $i$. The
results of Section~\ref{section-gorenstein} can be used to study 
the socle and Gorenstein property of $S/I(\mathbb{X})$, and duality
criteria. If
$y^{v_i}=y_i$ for $i=1,\ldots,s$, $\mathbb{X}$ is called a
\textit{projective torus} and $C_\mathbb{X}(d)$ is called a 
\textit{generalized Reed--Solomon
code} of degree $d$ \cite{duursma-renteria-tapia,GRH}. Since a projective torus is a complete
intersection \cite{GRH}, by Proposition~\ref{gorenstein-essential},
in this case essential
monomials exist when $\prec$ is the GRevLex order. 
\end{example}

\begin{example}\label{example2}
Let $K$ be the field $\mathbb{F}_3$, let $S=K[t_1,t_2,t_3]$ be a
polynomial ring with the GRevLex order, let
$\mathbb{X}=\{[P_1],\ldots,[P_{7}]\}$ be the
following set of points in $\mathbb{P}^2$:
$$
[(1,0,1)],\,[(1,1,1)], \,[(1,1,2)], \,[(0,0,1)],\, 
\,[(0,1,0)],\,[(0,1,1)],\,[(0,1,2)],
$$
and let $\mathfrak{p}_1,\ldots,\mathfrak{p}_{7}$ be the vanishing
ideals of these points:
$$
\begin{array}{ccccc}
(t_2,-t_3+t_1),&(-t_3+t_1,-t_3+t_2),&(-t_3-t_1,-t_3-t_2),&(t_1,t_2),&(t_1,t_3),\cr
(t_1,-t_3+t_2),& (t_1,-t_3-t_2).&&&
\end{array}
$$
\quad Using Eq.~\eqref{jun17-23} and
Proposition~\ref{indicator-function-prop}, 
and changing initial data in Procedure~\ref{sep12-18}, we obtain that the vanishing
ideal $I$ of $\mathbb{X}$ is given by 
$$
I=(t_1t_2^2+t_1^2t_3-t_1t_2t_3-t_1t_3^2,\,
t_1^2t_2+t_1^2t_3-t_1t_2t_3-t_1t_3^2,\, 
t_1^3-t_1t_3^2,\, t_2^3t_3-t_2t_3^3),
$$
the unique list $f_1,\ldots,f_{7}$, up to 
multiplication by scalars from $K^*$, of the standard indicator functions for
$\mathbb{X}$ is given by
\begin{align*}
&t_1^2-t_1t_2,\, t_1^2+t_1t_2-t_1t_3,\, t_1^2-t_1t_3,\, 
-t_1^2t_3+t_1t_2t_3-t_2^2t_3+t_3^3,\, t_2^3-t_2t_3^2,\,\\
&-t_1^2t_3-t_1t_2t_3-t_2^2t_3+t_1t_3^2-t_2t_3^2,\,  
-t_1^2t_3+t_2^2t_3+t_1t_3^2-t_2t_3^2,
\end{align*} 
the Hilbert function of $I$ is
$$
H_I(0)=1,\ H_I(1)=3,\ H_I(2)=6,\ H_I(d)=7\mbox{ for }d\geq 3.
$$
${\rm v}(I)={\rm
reg}(\delta_\mathbb{X})=2$ and $r_0={\rm
reg}(S/I)=3$. By Theorem~\ref{sarabia-vila-2}, one has 
$$
R_1=R_2=R_3=2, R_4=R_5=R_6=R_7=3.
$$
\quad By convention we set $\delta_\mathbb{X}(d,r)=\infty$ if 
$\rho_r\geq d$ or equivalently if $r>H_I(d)$. The matrix of
generalized Hamming weights is the $r_0\times|\mathbb{X}|=3\times 7$
matrix given by:
$$
(\delta_{\mathbb{X}}(d,r))=\begin{pmatrix}
3 & 6 & 7 & \infty & \infty & \infty & \infty \\
1 & 2 & 3 & 5 & 6 & 7 & \infty \\
1 & 2 & 3 & 4 & 5 & 6 & 7
\end{pmatrix}.
$$
\end{example}

\begin{appendix}
\section{Procedures for {\it Macaulay\/}$2$}\label{Appendix}
\begin{procedure}\label{sep12-18} Using \textit{Macaulay}$2$
\cite{mac2}, version 1.15, to
compute the standard indicator 
functions of a finite set $\mathbb{X}$ of projective points, 
the v-number of the vanishing ideal $I(\mathbb{X})$, the regularity
index of $\delta_\mathbb{X}$, and the minimum
distance $\delta_\mathbb{X}(d)$ of the Reed--Muller-type code
$C_\mathbb{X}(d)$. This procedure also computes the generalized
Hamming weights of $C_\mathbb{X}(d)$ over small fields and the
generalized footprint of $I(\mathbb{X})$.  This procedure
corresponds to Example~\ref{Hiram-first-counterxample}.
\begin{verbatim}
q=3, G=ZZ/q, S=G[t3,t2,t1,MonomialOrder=>GLex]
p1=ideal(t2,t1-t3),p2=ideal(t2,t3),p3=ideal(t2,2*t1-t3)
p4=ideal(t1-t2,t3),p5=ideal(t1-t3,t2-t3), p6=ideal(2*t1-t3,2*t2-t3)
p7=ideal(t1,t2),p8=ideal(t1,t3),p9=ideal(t1,t2-t3),p10=ideal(t1,2*t2-t3)
I=intersect(p1,p2,p3,p4,p5,p6,p7,p8,p9,p10)
M=coker gens gb I, regularity M, degree M
init=ideal(leadTerm gens gb I)
aux=(d,r)->degree M-max apply(apply(subsets(toList set apply(
toList set(apply(apply(apply(apply(toList ((set(0..q-1))^**(
#flatten entries basis(d,M))-(set{0})^**(#flatten entries 
basis(d,M)))/deepSplice,toList),x->basis(d,M)*vector x),entries),
n->n#0)),m->(leadCoefficient(m))^(-1)*m),r),ideal),
x-> if #(set flatten entries leadTerm gens x)==r and not 
quotient(I,x)==I then degree(I+x) else 0)
--This computes the generalized Hamming weights 
--and gives "infinite" when they are not defined
newgenmd=(d,r)->if r>hilbertFunction(d,M) then Infinite else aux(d,r)
time newgenmd(1,1)--minimum distance gives 6
time newgenmd(2,1)--minimum distance gives 3
time newgenmd(3,1)--minimum distance gives 1
time newgenmd(2,2)--gives 5
time newgenmd(1,4)--gives infinite
er=(x)-> if not quotient(init,x)==init then degree ideal(init,x) else 0
--this is the generalized footprint
fpr=(d,r)->degree M - max apply(apply(apply(subsets(flatten entries 
basis(d,M),r),toSequence),ideal),er)
time fpr(1,1), fpr(2,1), fpr(3,1), fpr(2,2), fpr(1,4)
--Computing the set F={f1,f2,f3,f4,f5,f6,f7,f8,f9,f10}
--of standard indicator functions 
J1=quotient(I,p1), soc1=J1/I, degrees mingens soc1--gives 4
f1=toString flatten entries mingens soc1--gives f1
J2=quotient(I,p2), soc2=J2/I, degrees mingens soc2--gives 4
f2=toString flatten entries mingens soc2--gives f2
J3=quotient(I,p3), soc3=J3/I, degrees mingens soc3--gives 4
f3=toString flatten entries mingens soc3--gives f3
J4=quotient(I,p4), soc4=J4/I, degrees mingens soc4--gives 4
f4=toString flatten entries mingens soc4--gives f4
J5=quotient(I,p5), soc5=J5/I, degrees mingens soc5--gives 4
f5=toString flatten entries mingens soc5--gives f5
J6=quotient(I,p6), soc6=J6/I, degrees mingens soc6--gives 4
f6=toString flatten entries mingens soc6--gives f6
J7=quotient(I,p7), soc7=J7/I, degrees mingens soc7--gives 3
f7=toString flatten entries mingens soc7--gives f7
J8=quotient(I,p8), soc8=J8/I, degrees mingens soc8--gives 4
f8=toString flatten entries mingens soc8--gives f8
J9=quotient(I,p9), soc9=J9/I, degrees mingens soc9--gives 4
f9=toString flatten entries mingens soc9--gives f9
J10=quotient(I,p10), soc10=J10/I, degrees mingens soc10--gives 4
f10=toString flatten entries mingens soc10--gives f10
\end{verbatim}
\end{procedure}

\begin{procedure}\label{sep29-23}
This procedure is based on 
Proposition~\ref{selfo-char} and 
Corollary~\ref{selfd-char}. It determines whether 
or not a given Reed--Muller type code $C_\mathbb{X}(d)$ is self orthogonal or self
dual. The pseudocode is: (a) If $(1,\ldots,1)\in
C_\mathbb{X}(2d)^\perp$ then
``self orthogonal'' else ``not self orthogonal``. (b) If $(1,\ldots,1)\in
C_\mathbb{X}(2d)^\perp$ and $|\mathbb{X}|=2H_I(d)$ then
``self dual'' else ``not self dual''. 
The following is the implementation in \textit{Macaulay}$2$
\cite{mac2} and corresponds to Example~\ref{algorithm-example}. We 
load a package on numerical algebraic
geometry to evaluate systems of polynomials at given sets of points. 
\begin{verbatim}
load "NAGtypes.m2"
--Polynomial ring with the default order GRevLex:
q=9, S=GF(q,Variable=>a)[t1,t2]
I=ideal(t1^q*t2-t1*t2^q), G=gb I
M=coker gens gb I, regularity M, degree M
init=ideal(leadTerm gens gb I)
X={{1,0},{1,1},{1,a},{1,a^2},{1,a^3},{1,a^4},{1,a^5},{1,a^6},
{1,a^7},{0,1}}
--These are the points in the right format:
B=apply(X,x->point{toList x})
--This function determines whether or not C_X(d) is self dual 
selfdual=(d)->if degree(M)==2*hilbertFunction(d,M) and (toList set
apply (apply(apply(0..#(flatten entries basis(2*d,M))-1,
n-> apply(B,x->evaluate(polySystem{(flatten entries
basis(2*d,M))#n},x))), x-> (toList x)),sum)=={0})  then 
SELFDUALCODE else NOTSELFDUAL 
apply(0..regularity(M),selfdual)
selfdual(4), selfdual(9)
--This function determines whether or not C_X(d) is self orthogonal 
selforthogonal=(d)->if toList set apply(apply(apply(0..#(flatten
entries basis(2*d,M))-1,n-> apply(B,x->evaluate
(polySystem{(flatten entries basis(2*d,M))#n},x))),x-> (toList
x)),sum)=={0} then SELFORTHOGONAL  else NOTSELFORTHOGONAL
apply(0..regularity(M),selforthogonal)
selforthogonal(4), selforthogonal(9)
\end{verbatim}
\end{procedure}

\begin{procedure}\label{aug23-23} Using \textit{Macaulay}$2$ 
\cite{mac2}, version 1.15,  to
compute the socle of an Artinian algebra and deciding whether or not a
vanishing ideal is Gorenstein. This procedure also computes the 
essential monomials of a finite set of projective points, and the Hilbert
function and Hilbert series of a vanishing ideal that can be used to
check the symmetry of its h-vector and the hypothesis of our duality
criteria. This procedure
corresponds to Example~\ref{gorenstein-essential-example}.
\begin{verbatim}
restart
load "NAGtypes.m2"
--Polynomial ring with the default order GRevLex:
q=3, S=ZZ/q[t1,t2,t3,t4]
--Use the following order for computations using the invlex order
--q=3, S=ZZ/q[t4,t3,t2,t1,MonomialOrder=>Lex]
--vanishing ideal of points
p1=ideal(t2,t1+t3,t1-t4),p2=ideal(t2,t1-t3,t1-t4),p3=ideal(t1,t2+t3,t2+t4)
p4=ideal(t1,t2,t3+t4),p5=ideal(t1,t2-t3,t2-t4)
I=intersect(p1,p2,p3,p4,p5), G=gb I
--The quotient ring S/I
M=coker gens gb I, regularity M, degree M
init=ideal(leadTerm gens gb I)
--Computing the set of standard indicator functions F={f1,f2,f3,f4,f5}
--The minimal generator of (I: pi)/I for i=1,...,5 gives 
--the standard indicator functions fi
J1=quotient(I,p1), soc1=J1/I
f1=toString flatten entries mingens soc1
J2=quotient(I,p2), soc2=J2/I
f2=toString flatten entries mingens soc2
J3=quotient(I,p3), soc3=J3/I
f3=toString flatten entries mingens soc3
J4=quotient(I,p4), soc4=J4/I
f4=toString flatten entries mingens soc4
J5=quotient(I,p5), soc5=J5/I
f5=toString flatten entries mingens soc5
--Finding an Artinian reduction
H=t1+t4, quotient(I,H)==I
J=I+ideal(H), gens gb J
--The Socle of S/J can be used to check the Gorenstein property
SocleJ=quotient(J,ideal(t1,t2,t3,t4))/J
degrees mingens SocleJ
--If the Socle of S/J has one minimal generator the
--ring S/I is Gorenstein
gensoc=toString flatten entries mingens SocleJ
G1=gb J
--dividing each fi by J gives the remainder
(flatten entries mingens soc1)#0 % G1
(flatten entries mingens soc2)#0 % G1
(flatten entries mingens soc3)#0 % G1
(flatten entries mingens soc4)#0 % G1
(flatten entries mingens soc5)#0 % G1
--Hilbert series and h-vector
HS=hilbertSeries M, reduceHilbert HS
--To check the Gorenstein property directly from the 
--graded minimal resolution
res M
r0=regularity M
--Hilbert function
hilbertFunction(0,M), hilbertFunction(1,M), hilbertFunction(2,M)
--To compute the values of standard indicator functions
--and normalize to value 1 if necessary
g1=(flatten entries mingens soc1)#0
g2=(flatten entries mingens soc2)#0
g3=(flatten entries mingens soc3)#0
g4=(flatten entries mingens soc4)#0
g5=(flatten entries mingens soc5)#0
--Indicator functions 
ps1=polySystem{g1}
ps2=polySystem{g2}
ps3=polySystem{g3}
ps4=polySystem{g4}
ps5=polySystem{g5}
--These are the points of X that define the evaluation code:
X={{1,0,-1,1},{1,0,1,1},{0,1,-1,-1},{0,0,1,-1},{0,1,1,1}}
--These are the points in the right format:
B=apply(X,x->point{toList x})
ev1=(x)->evaluate(ps1,x), ev2=(x)->evaluate(ps2,x)
ev3=(x)->evaluate(ps3,x), ev4=(x)->evaluate(ps4,x)
ev5=(x)->evaluate(ps5,x)
--These are the values of the standard indicator function fi 
--at all points of X
apply(B,ev1), apply(B,ev2), apply(B,ev3)
apply(B,ev4), apply(B,ev5)
\end{verbatim}
\end{procedure}
\end{appendix}

\section*{Acknowledgments} 
We thank the referees for a careful
reading of the paper and for the improvements suggested. 
Computations with \textit{Macaulay}$2$ \cite{mac2} were important to
find the standard indicator functions of a finite set of projective
points and to find the parameters of Reed--Muller-type codes.  

\section*{\ } No datasets were generated or analysed during the current study.

\subsection*{Statement} On behalf of all authors, the corresponding
author states that there is no conflict of interest.

\bibliographystyle{plain}

\end{document}